\documentclass[reqno,a4paper,11pt]{article}
\usepackage{amsmath,amsthm,dsfont,amsfonts,amssymb,fancyhdr,mathtools}
\usepackage{enumerate}
\usepackage[usenames,dvipsnames]{color}
\usepackage{bbm,soul,supertabular,longtable,verbatim,extarrows}
\usepackage{titlesec}
\usepackage{graphicx}
\usepackage[titletoc,toc,title]{appendix}

\usepackage{tikz}
\usetikzlibrary{arrows,decorations.pathmorphing,backgrounds,positioning,fit,petri}

\usepackage{float}
\usepackage{BOONDOX-cal}
\usepackage{cite}
\usepackage{booktabs,multirow,makecell}
\usepackage{authblk}
\usetikzlibrary{trees}
\usepackage{lipsum}
\usepackage{mathrsfs}
\usepackage[top=2.4cm,bottom=2.2cm,left=2.6cm,right=2cm]{geometry}
\usepackage{comment}
\usepackage[T1]{fontenc}
\usepackage[utf8]{inputenc}
\usepackage{lmodern}
\usepackage{microtype}
\usepackage{aliascnt}
\usepackage{enumitem}
\usepackage{indentfirst}

\newcommand{\G}{\Gamma}
\newcommand{\lmin}{\lambda_{\min}}

\newcommand{\Sp}{\operatorname{Sp}}

\newcommand{\dist}{\operatorname{dist}}
\newcommand{\Z}{\mathbb{Z}}
\newcommand{\Nslim}{N^{\mathrm{slim}}}
\newcommand{\Nfat}{N^{\mathrm{fat}}}
\newcommand{\Vslim}{V_{\mathrm{slim}}}
\newcommand{\Vfat}{V_{\mathrm{fat}}}

\newcommand{\Sh}{\Gamma_{\operatorname{Sh}}}

\newcommand{\ip}[2]{\left(#1,#2\right)}

\newtheorem{thm}{Theorem}[section]

\newtheorem{de}[thm]{Definition}

\newtheorem{theorem}[thm]{Theorem}
\newtheorem{lemma}[thm]{Lemma}
\newtheorem{proposition}[thm]{Proposition}

\newtheorem{assumption}[thm]{Assumption}

\soulregister\cite7
\soulregister\citep7
\soulregister\citet7
\soulregister\ref7
\soulregister\pageref7
\sethlcolor{yellow}
\setstcolor{red}
\soulregister{\em}{0}

\usepackage[colorlinks=true, allcolors=blue]{hyperref}
\usepackage[notref]{showkeys} 

\begin{document}
	\title{The integrability of sesqui-regular graphs with smallest eigenvalue at least $-3$}
	\author[a,b]{Qianqian Yang}
	\author[a]{Ruo-Lan Ding}
	\affil[a] {\footnotesize{Department of Mathematics, Shanghai University, Shanghai 200444, PR China}}
	\affil[b] {\footnotesize{Newtouch Center for Mathematics of Shanghai University, Shanghai 200444, PR China}}

	\maketitle
	\pagestyle{plain}
	
	\newcommand\blfootnote[1]{%
		\begingroup
		\renewcommand\thefootnote{}\footnote{#1}%
		\addtocounter{footnote}{-1}%
		\endgroup}
	\blfootnote{E-mail addresses:  {\tt qqyang@shu.edu.cn} (Q. Yang),  {\tt rlding@shu.edu.cn} (R.-L. Ding).}

\begin{abstract}
In this paper, we prove that every connected sesqui-regular graph with parameters $(n,k,c)$, where $c\geq3$, and with smallest eigenvalue in $[-3,-2)$ is
$1$-integrable if its valency is sufficiently large.  The bound on $c$ is optimal: for every integer $t\geq2$, the Cartesian product of the Shrikhande graph and $K_t$ is a connected
sesqui-regular graph with parameters $(16t,t+5,2)$ and smallest eigenvalue
$-3$, but it is not $1$-integrable.  
\end{abstract}

\noindent\textbf{Keywords.}
Sesqui-regular graph; smallest eigenvalue; integrability; Hoffman
graphs.

\medskip
\noindent\textbf{2020 Mathematics Subject Classification.}
05C50, 05C75.

\section{Introduction}\label{sec:introduction}

All graphs are finite, undirected, and simple.  For a graph $G$, let
$V(G)$ and $E(G)$ denote its vertex and edge sets, respectively, and let
$A(G)$ denote its adjacency matrix.  The smallest eigenvalue of $A(G)$ is
denoted by $\lmin(G)$.  For $x,y\in V(G)$, write $x\sim_G y$ if $x$ and $y$ are adjacent, and put
\[
 N_G(x)=\{y\in V(G):x\sim_G y\}.
\]
Write
$\dist_G(x,y)$ for the distance between $x$ and $y$. The $i$th distance neighborhood of $x$ is
\[
 G_i(x)=\{y\in V(G):\dist_G(x,y)=i\}.
\]
The \emph{valency} of $x$ is $k_G(x)=|N_G(x)|$.  For a subset $X\subseteq V(G)$, write $G[X]$ for the subgraph of $G$ induced on $X$. For disjoint subsets
$X,Y\subseteq V(G)$, let $e_G(X,Y)$ denote the number of edges with one endpoint in $X$ and the other in $Y$; the graph subscript is omitted when the ambient graph is clear.

The graph $G$ is $k$-regular if every vertex has valency $k$.  A $k$-regular graph
on $n$ vertices is \emph{strongly regular} with parameters $(n,k,a,c)$ if
each pair of adjacent vertices has exactly $a$ common neighbors and each 
pair of nonadjacent vertices has exactly $c$ common neighbors.

A $k$-regular graph on $n$ vertices is \emph{sesqui-regular with parameters
$(n,k,c)$} if
\[
|N_G(x)\cap N_G(y)|=c
 \quad\text{whenever}~
 \dist_G(x,y)=2.
\]
Thus sesqui-regularity prescribes the number of common neighbors of
vertices at distance two, without imposing a corresponding condition on adjacent
vertices. A sesqui-regular graph of diameter two is therefore
\emph{co-edge-regular}. In particular, the class of sesqui-regular graphs
contains all strongly regular graphs and co-edge-regular graphs and is substantially broader.

For adjacent vertices $x$ and $y$, let $a(x,y)$ denote the number of their
common neighbors. The \emph{level} $\ell(\Gamma)$ of a sesqui-regular graph
$\Gamma$ is defined by
\[
\ell(\Gamma):=\left|\{a(x,y)\mid x\sim_\G y\}\right|.
\]
Every strongly regular graph has level $1$. Recent results show that co-edge-regular graphs can remain structurally complicated even when their spectra is highly restricted. Ge and Koolen
\cite{Ge.koolen2025co.edge.regular} constructed infinite families of
co-edge-regular graphs of level at least $3$ with exactly four distinct
eigenvalues. More recently, van Dam, Ge, and Koolen
\cite{vanDam.Ge.Koolen2026} constructed, for every prime power $q$, infinitely many co-edge-regular
graphs with exactly four distinct eigenvalues, smallest eigenvalue
$-2q-1$, and level at least $q+2$. Thus, even within this restrictive
four-eigenvalue setting, the level can tend to infinity as the smallest
eigenvalue tends to negative infinity. 

These results illustrate that the structure of sesqui-regular graphs can remain highly complicated under strong spectral restrictions, and that spectral data alone do not generally lead to an effective classification.

For the graphs considered in this paper, the matrix $A+3I$ is positive semidefinite, where $A$ denotes the adjacency matrix. It is therefore natural to ask whether this matrix admits an integral Gram representation, which leads to the following notion.

For a positive integer $s$, a graph $G$ is \emph{$s$-integrable} if its
vertices can be assigned integral vectors $x\mapsto\mathbf{x}$ such that
\begin{equation}\label{def:s-integrable}
	(\mathbf{x},\mathbf{y})=\left\{
	\begin{array}{ll}
		s\lceil-\lambda_{\min}(G)\rceil & \text{ if }x=y, \\
		s & \text{ if }x\sim y, \\
		0 & \text{ otherwise},
	\end{array}
	\right.	
\end{equation}
where $(\cdot,\cdot)$ denotes the standard inner product. Equivalently,
$G$ is $s$-integrable if and only if there exists an integral matrix $N$
such that
\[
 A(G)+\lceil-\lambda_{\min}(G)\rceil I
 =\frac1sN^{\mathsf T}N.
\]

Graphs with a fixed lower bound on the smallest eigenvalue often
exhibit strong structural restrictions when the valency is large. Koolen,
Yang, and Yang \cite{KoolenYangYang2018} proved the following general
integrability result.
\begin{thm}[{\cite[Theorem~1.3]{KoolenYangYang2018}}]\label{thm:kyy-2-integrable}
	There exists a positive integer $\kappa_{1}$ such that every
	connected graph $\Gamma$ with
	$\lambda_{\min}(\Gamma)\geq-3$ and minimum valency at least
	$\kappa_{1}$ is $s$-integrable, for every integer $s\geq2$.
\end{thm}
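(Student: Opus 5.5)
The plan is to follow the Hoffman graph approach of \cite{KoolenYangYang2018}. First reduce to fat Hoffman graphs. Then compute an explicit integral representation from the reduced (special) matrix, and finally embed the relevant root lattices into scaled copies of $\mathbb{Z}^m$. The steps below are not independently verified, and Step 1 is taken from \cite{KoolenYangYang2018} rather than proved here.

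\emph{Step 1 (structure theorem).} Show there is $\kappa_1$ such that every connected $\Gamma$ with $\lambda_{\min}(\Gamma)\ge -3$ and minimum valency at least $\kappa_1$ is the slim graph of a fat Hoffman graph $\mathfrak h$ with $\lambda_{\min}(\mathfrak h)\ge -3$. The route has three parts.
\begin{enumerate}[label=(\roman*)]
\item Large valency combined with $\lambda_{\min}\ge -3$ forces large cliques. A Ramsey argument applies because $\Gamma$ has no large induced $K_{1,t}$ or large induced $\tilde K_{2t}$-type configurations.
\item Every vertex lies in at most three "large" maximal cliques, or quasi-cliques.
\item Adjoin one fat vertex for each such large clique. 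One checks that every slim vertex then has at least one fat neighbour, and that $\lambda_{\min}$ of the resulting Hoffman graph is still $\ge -3$; the latter uses Hoffman's limit theorem on growing cliques.
\end{enumerate}
This is the step I expect to be the main obstacle: controlling the quasi-cliques and proving that the Hoffman graph genuinely has smallest eigenvalue $\ge-3$ needs the quantitative Hoffman graph machinery.

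\emph{Step 2 (reduced representation).} For $\mathfrak h$ fat with $\lambda_{\min}(\mathfrak h)\ge -3$, the matrix $3I+\Sp(\mathfrak h)$ is positive semidefinite and integral. Write it as the Gram matrix of reduced vectors $r_x$. Put
\[
 \mathbf{x}=\sum_{f\sim x,\ f\ \mathrm{fat}} e_f + r_x .
\]
Then $(\mathbf{x},\mathbf{y})$ equals the corresponding entry of $A(\Gamma)+3I$. Since each slim vertex has between $1$ and $3$ fat neighbours, $(r_x,r_x)\in\{0,1,2\}$. Hence the lattice $L$ generated by the $r_x$ is an integral lattice spanned by vectors of norm at most $2$. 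By Witt's theorem such an $L$ is an orthogonal sum of a copy of $\mathbb{Z}^a$ and root lattices of types $A_n$, $D_n$, $E_6$, $E_7$, $E_8$.

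\emph{Step 3 (embedding for $s$).} It suffices to embed $\sqrt{s}\,L$ into some $\mathbb{Z}^m$, since $\sqrt s\,\mathbb{Z}^F\subseteq\mathbb{Z}^{sF}$ trivially handles the $e_f$ part.
\begin{itemize}
\item For $s=2$: $A_n$ and $D_n$ already lie in $\mathbb{Z}^{n+1}$ or $\mathbb{Z}^n$. $E_6$ and $E_7$ lie in $E_8$, and $\sqrt2 E_8\subseteq\mathbb{Z}^8$ by the Hamming-code construction.
\item For $s=3$: I would use an explicit embedding of $E_8(3)$ into $\mathbb{Z}^m$. One route is through $E_8\otimes$ a suitable rank-$3$ form; alternatively, invoke the known fact that $E_8(s)$ embeds in an odd unimodular $\mathbb{Z}^m$ for every $s\ge2$.
\item For general $s$: write $s=2a+3b$ and concatenate $a$ copies of the $2$-representation with $b$ copies of the $3$-representation. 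This gives $s$-integrability for every $s\ge2$.
\end{itemize}
This step explains why $s=1$ fails in general: $E_8$ itself does not embed in any $\mathbb{Z}^m$.
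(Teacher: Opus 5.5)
The paper gives no proof of this statement; it is quoted from \cite{KoolenYangYang2018}, so your outline has to stand on its own. Its architecture is the right one: a fat Hoffman graph cover, then a reduced representation, then an embedding of $\sqrt{s}$ times the reduced lattice. Your Step~2 is a valid shortcut that bypasses Theorem~\ref{thm:two-fat-integral} and the special-graph analysis. The matrix $\operatorname{Sp}(\mathfrak h)+3I$ is an integral Gram matrix with diagonal entries $3-|\Nfat_{\mathfrak h}(x)|\in\{0,1,2\}$. Split off a maximal orthogonal set of norm-$1$ vectors; the projections of the generators are then $0$ or roots, so the reduced lattice is $\mathbb{Z}^a\oplus R$ with $R$ a root lattice. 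This step does not need quasi-cliques to be cliques, so the general structure theorem for graphs with $\lambda_{\min}\ge-3$ and large minimum valency suffices for Step~1, and citing it here is reasonable. Two points, however, are not covered.

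First, $s$-integrability is defined with $\lceil-\lambda_{\min}(\Gamma)\rceil$, which equals $3$ only when $\lambda_{\min}(\Gamma)<-2$. Your vectors always realize $A(\Gamma)+3I$. That says nothing about the required matrix $s(A(\Gamma)+2I)$ when $-2\le\lambda_{\min}(\Gamma)<-1$, or $s(A(\Gamma)+I)$ for complete graphs. This range must be treated separately. For instance, take $\kappa_1\ge36$; then Theorem~\ref{thm:large-minus-two-integrable} makes such a graph $1$-integrable, and stacking $s$ copies of the integral matrix $N$ gives $s$-integrability.

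Second, odd $s$ requires an embedding of $\sqrt3E_8$ into some $\mathbb{Z}^m$, which you only assert; ``$E_8\otimes$ a suitable rank-$3$ form'' is not a construction. It is easy to supply. We have $E_8\supseteq A_2\oplus E_6\supseteq A_2^{\oplus4}$. If $\alpha_j,\beta_j$ are simple roots of the $j$th copy, consider the map
\[
 x\longmapsto\bigl((x,\alpha_j),(x,\beta_j),(x,\alpha_j+\beta_j)\bigr)_{j=1}^{4}\in\mathbb{Z}^{12}.
\]
It multiplies all inner products by $3$. The reason is that $\alpha_j,\beta_j,-(\alpha_j+\beta_j)$ form a tight frame with constant $3$ on their plane, and the four planes are mutually orthogonal and span $E_8\otimes\mathbb{R}$. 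Eight mutually orthogonal roots give the $s=2$ embedding in the same way. With these two repairs, writing $s=2a+3b$ and concatenating completes the argument.
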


The same work also provides infinite families of non-$1$-integrable graphs
with smallest eigenvalue at least $-3$ and unbounded minimum valency.
Consequently, without additional hypotheses, Theorem~\ref{thm:kyy-2-integrable}
cannot be strengthened in general from $2$-integrability to
$1$-integrability.

Within the class of sesqui-regular graphs, Yang, Gebremichel, Rehman,
Yang, and Koolen~\cite{YangEtAl2021} determined the graphs corresponding
to the two extremal parameter values $c=k$ and $c=k-1$.

\begin{thm}[{\cite[Theorem~1.6]{YangEtAl2021}}]\label{thm:extreme}
	Let $\Gamma$ be a connected sesqui-regular graph with parameters
	$(n,k,c)$ and smallest eigenvalue
	$\lambda_{\min}(\Gamma)\in[-3,-2)$. If $c=k$, then $n\geq6$ and $\Gamma$ is the
	complete multipartite graph $K_{3,\ldots,3}$. If $c=k-1$,
	then $n\geq7$ and $\Gamma$ is either the complement of a disjoint union of cycles of
	length at least $4$, or the $3$-cube. In particular, $\Gamma$ is
	$1$-integrable.
\end{thm}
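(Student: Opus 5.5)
The plan is to treat the two cases $c=k$ and $c=k-1$ separately. In each case I would first pin down the combinatorial structure from the local condition. Then I would use the spectral bound $\lmin(\G)\in[-3,-2)$ to fix the remaining parameters. Finally I would write down explicit integral vectors realising \eqref{def:s-integrable} with $s=1$.

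\emph{Case $c=k$.} If $\dist(x,y)=2$, then $|N(x)\cap N(y)|=k$ forces $N(x)=N(y)$. I claim that ``$x=y$ or $N(x)=N(y)$'' is an equivalence relation whose classes are cocliques. Moreover, two vertices in different classes are adjacent: if $x\not\sim z$ lie in different classes, take a shortest path $x\sim y\sim w\cdots$ towards $z$. Since $x$ and $w$ are at distance $2$, they have the same neighbourhood, so we may replace $x$ by $w$ and shorten the path, and induction on distance gives a contradiction. Hence $\G$ is complete multipartite, and regularity makes all parts of equal size $t$. Since $\lmin(K_{t,\dots,t})=-t$ (with at least two parts), the condition $\lmin\in[-3,-2)$ gives $t=3$, hence $n\ge 6$.

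For $1$-integrability, use one common coordinate vector $f$. For the $j$th part, take three fresh coordinates $e_{j,1},e_{j,2},e_{j,3}$ and assign to its vertices $f+e_{j,1}-e_{j,2}$, $f+e_{j,2}-e_{j,3}$ and $f+e_{j,3}-e_{j,1}$. These vectors have norm $3$, inner product $1-1=0$ inside a part, and inner product $1$ across parts, as required.

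\emph{Case $c=k-1$.} Here two vertices at distance $2$ have neighbourhoods differing in exactly one vertex on each side. I would split according to the diameter.

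If $\dist(x,z)=3$ for some $x,z$, pick a path $x\sim y\sim w\sim z$. The pairs $(x,w)$ and $(y,z)$ at distance $2$ then share almost all neighbours. A counting argument on $N(y)\cap N(w)$ should force $k$ to be very small, and I would aim to show $k=3$. The bipartite-like structure that emerges, together with $\lmin\ge-3$, then identifies the $3$-cube.

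If the diameter is $2$, then any two nonadjacent $x,y$ satisfy $|N(x)\setminus N(y)|=1$. Passing to the complement $\bar\G$, I would show that every vertex has exactly two non-neighbours (other than itself), so that $\bar\G$ is $2$-regular, i.e.\ a disjoint union of cycles. A triangle in $\bar\G$ would make three mutually nonadjacent vertices in $\G$ with a wrong common-neighbour count, so every cycle has length at least $4$. The eigenvalues of $\G=J-I-A(\bar\G)$ are $-1-\theta$, where $\theta$ ranges over the nonprincipal cycle eigenvalues together with the value $2$ coming from the extra components. Checking which configurations give $\lmin\in[-3,-2)$ settles the admissible cycle lengths and yields $n\ge7$.

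Integrability in this case follows from the identity
\[
A+3I=J+\bigl(2I-A(\bar\G)\bigr),
\]
where $2I-A(\bar\G)$ is the Laplacian of a union of cycles and hence equals $D^{\mathsf T}D$ for an oriented incidence matrix $D$. Appending a common all-ones coordinate then gives the required integral vectors. For the $3$-cube, integral vectors of norm $3$ can be written down directly, for instance from its description as $K_{4,4}$ minus a perfect matching.

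The main obstacle is the structural step for $c=k-1$: showing that $\bar\G$ is $2$-regular in the diameter-$2$ case, and pinning down the diameter-$3$ case as exactly the cube. This requires careful local counting around a pair at distance $2$. I would first try to show that the unique vertex in $N(x)\setminus N(y)$ defines an involution-like matching among the non-neighbours of $x$, which should force the complement to have degree $2$.
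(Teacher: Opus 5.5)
The paper quotes this theorem from earlier work without proof, so your plan has to stand on its own. The case $c=k$ is fine: nonadjacent vertices have equal neighbourhoods, so $\Gamma$ is complete multipartite with $\lambda_{\min}=-t$, and your vectors have Gram matrix $A+3I$. Both integral constructions for $c=k-1$ are also correct. For the cube there is an even simpler one: $A+3I=NN^{\mathsf T}$, where $N$ is the unsigned vertex--edge incidence matrix; this works for any cubic graph.

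The gap is in the structural half of the case $c=k-1$. At both key steps you expect local counting to deliver what only the eigenvalue hypothesis can.

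In diameter $2$ your matching is genuine: each non-neighbour of $x$ has exactly one neighbour among the $d:=n-k-1$ non-neighbours of $x$. But this only says that $\overline{\Gamma}$ is $d$-regular and locally $\operatorname{CP}(d/2)$; it cannot force $d=2$. Indeed, for $m,r\ge2$ the join of $m$ copies of $(r+1)K_2$ is connected, of diameter $2$, and sesqui-regular with $c=k-1$. Its complement, $m$ disjoint copies of $\operatorname{CP}(r+1)$, is $2r$-regular.

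Likewise, no counting on $N(y)\cap N(w)$ can make $k$ small. The graph $K_{k+1,k+1}$ minus a perfect matching is sesqui-regular with $c=k-1$ and diameter $3$ for every $k\ge2$.

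These families are excluded only by $\lambda_{\min}\in[-3,-2)$. The joins have $\lambda_{\min}=-2r-1\le-5$, and $K_{k+1,k+1}$ minus a perfect matching has $\lambda_{\min}=-k$.

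To repair the argument, bring the spectrum in at exactly these points.

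\emph{Diameter $2$.} Run the argument of Lemma~\ref{lem:locally-CP-classification} on each component of $\overline{\Gamma}$. Components are cycles of length at least $4$ if $d=2$, and copies of $\operatorname{CP}(d/2+1)$ if $d\ge4$. In the latter case connectedness of $\Gamma$ forces at least two components, so $-1-d\le-5$ is an eigenvalue of $\Gamma$, a contradiction.

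\emph{Diameter at least $3$.} Take $x\sim y\sim w\sim z$ with $\operatorname{dist}(x,z)=3$. Then $N(y)=\{x\}\cup\bigl(N(y)\cap N(z)\bigr)$ and $N(w)=\{z\}\cup\bigl(N(w)\cap N(x)\bigr)$. The same holds for every $u\in N(x)$ that has a neighbour in $N(z)$. If some $a\in N(x)$ had no neighbour in $N(z)$, then $a$ and $y$ would be at distance $2$ with $x$ as their only common neighbour. That forces $c=1$, so $k=2$, and then $\Gamma$ is a cycle with $\lambda_{\min}\ge-2$. Hence for $k\ge3$ the vertex set is $\{x,z\}\cup N(x)\cup N(z)$, and $\Gamma$ is $K_{k+1,k+1}$ minus a perfect matching. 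Only now does $-k\in[-3,-2)$ give $k=3$, the $3$-cube.
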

The remaining values of $c$ are constrained by the following result of
Koolen, Gebremichel, Yang, and Yang. It shows that, when the smallest
eigenvalue is bounded below and the valency is sufficiently large, either
$c$ is bounded or the order of the graph is close to its valency.

\begin{theorem}[{\cite[Theorem~1.5]{KoolenGebremichelYangYang2023} }]
\label{thm:large-valency-alternative}
For every integer $\lambda\ge2$, there exists a positive integer
$\kappa_2(\lambda)$ such that, if $\G$ is a connected sesqui-regular graph
with parameters $(n,k,c)$, $\lmin(\G)\ge-\lambda$, and
$k\ge\kappa_2(\lambda)$, then
\[
 c\le\lambda^2(\lambda-1)
 \quad\text{or}\quad
 n-k-1\le\frac{(\lambda-1)^2}{4}+1.
\]
\end{theorem}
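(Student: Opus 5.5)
This result is quoted from \cite[Theorem~1.5]{KoolenGebremichelYangYang2023}; here is how I would prove it. The approach is by contradiction, using Hoffman graph limit arguments in the style of \cite{KoolenYangYang2018}.

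Suppose that for some fixed $\lambda$ there is a sequence of connected sesqui-regular graphs $\G_i$ with parameters $(n_i,k_i,c_i)$ and $\lmin(\G_i)\ge-\lambda$. Suppose also that $k_i\to\infty$, $c_i>\lambda^2(\lambda-1)$, and $n_i-k_i-1>(\lambda-1)^2/4+1$.

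The first step applies the structure theory for graphs with bounded smallest eigenvalue and large minimum valency. By the Ramsey-type argument of Koolen--Yang--Yang, for large $k$ there is an integer $m=m(\lambda)$ with the following property. The vertex set is covered by maximal cliques of order at least $m$ (``quasi-cliques''). Each vertex lies in at most $\lambda$ of them. Every vertex $x$ has at most a bounded number of neighbors outside the union of the large cliques through $x$. These cliques play the role of fat vertices, so $\G$ is locally the slim part of a Hoffman graph $\mathfrak h$ with $\lmin(\mathfrak h)\ge-\lambda$.

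The second step counts common neighbors for a pair $x,z$ at distance two. A common neighbor $y$ is either in a large clique $C\ni x$ whose member $y$ is adjacent to $z$, or it is among boundedly many exceptional neighbors. A vertex $z\notin C$ has at most $\lambda-1$ neighbors in $C$, since otherwise $C\cup\{z\}$ together with the fat structure would force smallest eigenvalue below $-\lambda$ (a $K_{1,\lambda}$-type or $\tilde{K}$-type argument). Summing over the at most $\lambda$ cliques through $x$ gives a contribution of at most $\lambda(\lambda-1)$. The exceptional neighbors must also be bounded; in the extremal configuration this gives at most another factor of $\lambda$. Together these would yield $c\le\lambda^2(\lambda-1)$, unless there are essentially no such distance-two pairs "across" cliques.

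The third step treats that remaining case. The large $c$ forces $z$ to see many vertices in the cliques through $x$. This is only possible when the cliques are nearly the whole graph, i.e.\ $\G$ is close to complete. Then the complement $\bar\G$ has maximum degree $d=n-k-1$. From $\lmin(\G)\ge-\lambda$, interlacing applied to $\G$ restricted to a vertex and its non-neighbors would bound $d$. The key fact is that a complete multipartite-like structure $K_{1,d}$ in the complement produces eigenvalue roughly $-1-\sqrt{\,\cdot\,}$ behavior; optimizing this gives $d\le(\lambda-1)^2/4+1$.

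I expect the main obstacle to be the second step. Making the count of common neighbors sharp enough to give exactly $\lambda^2(\lambda-1)$ requires a careful Hoffman-graph classification of how a vertex can attach to several quasi-cliques. I would first try to prove that each vertex at distance two from $x$ meets each clique of $x$ in at most $\lambda-1$ vertices and lies near at most $\lambda$ cliques. I would then bound the leftover common neighbors via the bounded non-clique degree.
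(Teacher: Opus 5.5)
The paper gives no proof of this statement; it is quoted from \cite[Theorem~1.5]{KoolenGebremichelYangYang2023}. Your outline therefore has to stand on its own, and it does not: both the counting step and the final eigenvalue step have genuine gaps.

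\textbf{The local count.} Your claim that ``$z\notin C$ has at most $\lambda-1$ neighbours in $C$'' is false.
\begin{itemize}
\item Take the triangular graph $T(m)=L(K_m)$, which has least eigenvalue $-2$ (so $\lambda=2$) and $c=4$. Let $x=\{1,2\}$ and $z=\{3,4\}$.
\item Then $z$ has two neighbours $\{1,3\},\{1,4\}$ in the large clique $C_1$ of pairs through $1$, and two more in $C_2$.
\item Every neighbour of $x$ lies in $C_1\cup C_2$, so there are no exceptional neighbours. Yet $c=4=\lambda^2(\lambda-1)$, whereas your count gives $c\le\lambda(\lambda-1)=2$.
\end{itemize}
So the missing factor $\lambda$ cannot come from exceptional neighbours, which would in any case contribute additively.

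Your own computation on $C\cup\{z\}$, done correctly, gives the right input. Suppose $z$ has $t$ neighbours and $s$ non-neighbours in $C$. The partition $\{z\},\ N(z)\cap C,\ C\setminus N(z)$ is equitable, and the characteristic polynomial $p$ of its quotient matrix satisfies
\[
 p(-\lambda)=\bigl(t-\lambda(\lambda-1)\bigr)\bigl(s-(\lambda-1)^2\bigr)-\lambda^2(\lambda-1)^2 .
\]
Since $p$ is a monic cubic, $p(-\lambda)>0$ forces an eigenvalue below $-\lambda$. Hence, for $|C|$ large in terms of $\lambda$, either $t\le\lambda(\lambda-1)$ or $s\le(\lambda-1)^2$. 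The Hoffman-graph counterpart is that two fat vertices have at most $\lambda-1$ common slim neighbours; compare Lemma~\ref{lem:quasi-bounds} for $\lambda=3$.

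With at most $\lambda$ quasi-cliques through $x$, this is the natural source of $\lambda\cdot\lambda(\lambda-1)$. You would still need sesqui-regularity to choose a pair at distance two with no common neighbour outside the quasi-cliques through $x$.

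\textbf{The second alternative.} The case $s\le(\lambda-1)^2$ is exactly where the conclusion $n-k-1\le(\lambda-1)^2/4+1$ lives, and your outline does not handle it.

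First, your first step conflates large maximal cliques with quasi-cliques. In a cocktail-party graph every vertex lies in exponentially many large maximal cliques. The bound ``at most $\lambda$'' holds only for quasi-cliques, and these need not be cliques. Compare Theorem~\ref{thm:hoffman-cover}, where they are shown to be cliques under the hypothesis $n-k-1>2$, which is $(\lambda-1)^2/4+1$ for $\lambda=3$.

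Second, if $z$ misses at most $(\lambda-1)^2$ vertices of a large clique $C$ through $x$, then $c\ge|C|-(\lambda-1)^2$ is large. You must still prove that $d:=n-k-1$ is bounded in terms of $\lambda$; saying ``$\G$ is close to complete'' merely restates this.

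Third, your proposed eigenvalue argument fails.
\begin{itemize}
\item Restricting $\G$ to $x$ and its non-neighbours gives nothing. In the complement of a triangle-free $d$-regular graph (for example, the complements of unions of cycles in Theorem~\ref{thm:extreme}), the non-neighbourhood of $x$ is a clique. The induced subgraph is then $K_1\cup K_d$, whose least eigenvalue is $-1$ for every $d$.
\item Star-shaped test vectors in $\overline{\Gamma}$ give at best $\sqrt d\le\lambda-1$, i.e.\ $d\le(\lambda-1)^2$. This is too weak once $\lambda\ge3$.
\end{itemize}

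The threshold $(\lambda-1)^2/4+1$ is the Alon--Boppana bound $2\sqrt{d-1}\le\lambda-1$. Indeed, $\overline{\Gamma}$ is $d$-regular, and its non-principal eigenvalues are $-1-\theta\le\lambda-1$, with $\theta$ ranging over the non-principal eigenvalues of $\G$.
\begin{itemize}
\item If $\overline{\Gamma}$ is disconnected, $d$ itself is such an eigenvalue, so $d\le\lambda-1$.
\item Otherwise, once $d$ is known to be bounded, $n\ge k+1\to\infty$ forces the diameter of $\overline{\Gamma}$ to be unbounded. Alon--Boppana then gives $\lambda_2(\overline{\Gamma})\ge2\sqrt{d-1}-o(1)$.
\end{itemize}
Bounding $d$ and then applying Alon--Boppana to the complement is the missing idea. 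This alternative genuinely occurs with that threshold: complements of triangle-free $d$-regular graphs with second eigenvalue at most $2\sqrt{d-1}$ are sesqui-regular with $c=k+1-d$ and least eigenvalue at least $-1-2\sqrt{d-1}$.
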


Applying Theorem~\ref{thm:large-valency-alternative} with $\lambda=3$
shows that, for a connected sesqui-regular graph with smallest eigenvalue at
least $-3$ and sufficiently large valency, either $c\leq18$ or
$n-k-1\leq2$. In the range $c\geq9$, Yang et al.~\cite{YangEtAl2021} obtained the following stronger conclusion for sesqui-regular graphs whose smallest eigenvalue lies in $[-3,-2)$.

\begin{thm}[{\cite[Theorem~1.7 and~Theorem~1.8]{YangEtAl2021}}]\label{thm:cgeq9}
	There exists a positive integer $\kappa_{3}$ such that every
	connected sesqui-regular graph $\Gamma$ with parameters $(n,k,c)$,
	with $c\geq9$, smallest eigenvalue
	$\lambda_{\min}(\Gamma)\in[-3,-2)$, and $k\geq\kappa_{3}$,
	is $1$-integrable. Moreover, if $c\leq k-2$, then $c=9$ and
	$\Gamma$ is the block graph of a Steiner triple system.
\end{thm}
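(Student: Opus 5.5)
The natural route to Theorem~\ref{thm:cgeq9} is the Hoffman graph machinery. By the structure theory of Koolen, Yang and Yang, there is an integer $\kappa$ such that every connected graph $\G$ with $\lmin(\G)\ge-3$ and minimum valency at least $\kappa$ is the slim graph of a fat $\{\mathfrak h_1,\dots\}$-line Hoffman graph. The indecomposable fat Hoffman graphs that can occur all have smallest eigenvalue at least $-3$, and there are finitely many of them.

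The plan is to first produce, for large $k$, a family of maximal cliques (the \emph{quasi-cliques} attached to fat vertices) of size roughly $k/3$ or larger. These should cover every vertex at most three times, so that every vertex lies in about $\lceil-\lmin\rceil=3$ of them counted with multiplicity. The sesqui-regularity condition with $c\ge9$ is then used to control how these cliques meet. Take $x,y$ at distance $2$. Each clique through $x$ meets each clique through $y$ in a bounded number of vertices, and the common neighbours of $x$ and $y$ must be covered by the at most $3\times3$ pairs of cliques. Counting gives $c\le 9$ unless the representation degenerates, and equality forces every clique through $x$ to meet every clique through $y$ in exactly one vertex. This is the structure of a Steiner triple system: each vertex is a block, and there are three points per vertex.

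In the degenerate case, when $c$ is close to $k$, the common neighbourhood is huge. Theorem~\ref{thm:large-valency-alternative} then forces $n-k-1\le2$, which means $\G$ has at most two non-neighbours per vertex. In that regime $\G$ is the complement of a graph of maximum degree at most $2$, and Theorem~\ref{thm:extreme} handles $c\in\{k,k-1\}$. For $c\le k-2$ with $n-k-1\le2$, I would show directly, using regularity of the complement, that $\lmin<-3$ unless $c=k$ or $c=k-1$.

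For $1$-integrability, I would write the fat Hoffman representation as an integral vector realisation. Each fat vertex (clique) gets a unit coordinate vector. Each slim vertex gets the sum of the coordinate vectors of its cliques, plus correction vectors coming from the indecomposable pieces. When every vertex lies in exactly three cliques and two adjacent vertices share exactly one clique, we obtain $(\mathbf x,\mathbf x)=3$, $(\mathbf x,\mathbf y)=1$ for adjacent vertices and $0$ otherwise. This is the standard $1$-integral representation of the block graph of an STS.

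The main obstacle I expect is ruling out the exotic indecomposable Hoffman pieces, for example those coming from the $E_8$ or $D_n$ type constituents, which are only $2$-integrable. I would first try to show that any such piece creates a pair of vertices at distance $2$ with fewer than $9$ common neighbours. The idea is that vertices in an exotic piece have fewer than three quasi-cliques, so the counting bound becomes at most $2\cdot3$ or so, contradicting $c\ge9$.
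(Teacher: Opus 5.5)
The paper does not prove this statement; it quotes it from \cite{YangEtAl2021}. Your outline therefore has to stand on its own, and its central step does not. You claim that the common neighbours of $x,y$ at distance $2$ are covered by the at most $3\times3$ intersections of a quasi-clique through $x$ with one through $y$, so that ``counting gives $c\le 9$'', with equality only for one-point intersections. This fails on three counts.

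First, Lemma~\ref{lem:quasi-bounds} only gives $|C(f)\cap C(f')|\le 2$. Two adjacent slim vertices with the same two fat neighbours are compatible with $\lmin\ge-3$, since their special submatrix has eigenvalues $-1,-3$. So your count yields only $c\le 18$, which is Theorem~\ref{thm:large-valency-alternative} for $\lambda=3$ and nothing new.

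Second, an adjacency of $\G$ need not come from a shared fat vertex. Positive special edges give neighbours lying in no common quasi-clique, and they contribute to $c$ exactly as the term $N_D(x)\cap F_{f_y}$ does in \eqref{eq:common-neighbour-decomposition}. Your count omits them.

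Third, even with equality at one pair, a Steiner triple system needs global facts that one local count does not give: every vertex has exactly three fat neighbours, $\mathfrak h$ has no special edges at all, and any two quasi-cliques meet in exactly one vertex.

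Your ``degenerate case'' does not cover these configurations. Nothing you say excludes them when $10\le c\le 18$ and $n-k-1\ge 3$, where Theorem~\ref{thm:large-valency-alternative} is silent. The case split is also simpler than you make it. The count in the proof of Theorem~\ref{thm:main} gives $n-k-1\ge k-c+1$, so $c\le k-2$ already forces $n-k-1\ge3$, and your case `$c\le k-2$, $n-k-1\le 2$' is empty. Showing $\lmin<-3$ there was never possible anyway, because the complement of a graph that is regular of degree at most $2$ has $\lmin\ge-3$. So the actual content of the `moreover' clause is assumed rather than proved: ruling out $10\le c\le18$ and forcing the STS structure at $c=9$.

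Your plan for the exotic pieces also points the wrong way. By Theorem~\ref{thm:two-fat-integral}, the only indecomposable factors that can lack an integral reduced representation of norm~$3$ are those in which every slim vertex has exactly one fat neighbour. Moreover, special graphs represented in a $D_n$ lattice are $1$-integrable, so only exceptional $E_8$-type special graphs are at issue, cf.\ \eqref{eq:D-size}. In such a factor there is no bound like $2\cdot 3$ on $c$. By \eqref{eq:internal-common-count}, $c=w_x+|N_D(x)\cap F_{f_y}|$, and the argument giving \eqref{eq:two-way-degree-sum} shows that a large $c$ forces \emph{many} special neighbours, not few. What actually kills these factors is Lemma~\ref{lem:retention} followed by Lemma~\ref{lem:two-set-obstruction}, played against $\lmin(D)\ge-2$.

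The proof of Proposition~\ref{prop:c-ge-5} uses $c\le 8$ only through \eqref{eq:retention-numerical}, and the resulting bounds on $k$ decrease as $c$ grows. Lemma~\ref{lem:quasi-bounds} holds for every $c$: three common slim neighbours of two fat vertices give a principal submatrix of $\Sp(\mathfrak h)$ whose all-ones Rayleigh quotient is at most $-4$. Hence, for $c\ge 9$, the $1$-integrability half of the statement follows from this paper's argument when $c\le k-2$, and from Theorem~\ref{thm:extreme} when $c\ge k-1$, with no classification needed. What your proposal genuinely leaves unproved is the classification clause.
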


In this paper, we mainly investigate the integrability of sesqui-regular graphs with smallest eigenvalue in $[-3,-2)$ and with parameter $c\in\{3,4,5,6,7,8\}$. Our main result is as follows.

\begin{thm}\label{thm:main}
There exists a positive integer $\kappa_{4}$ such that for every
connected sesqui-regular graph $\G$ with parameters $(n,k,c)$ and smallest eigenvalue $\lmin(\G)\in[-3,-2)$, if 
\[
 c\geq3 \quad\text{and}\quad k\geq\kappa_4,
\]
then $\G$ is $1$-integrable.
\end{thm}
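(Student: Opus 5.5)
By Theorem~\ref{thm:cgeq9} it suffices to treat $3\le c\le 8$, and we may take $\kappa_4\ge\max\{\kappa_1,\kappa_2(3),\kappa_3\}$. The tool is Hoffman graph theory. By the structure theory behind Theorem~\ref{thm:kyy-2-integrable} (Koolen--Yang--Yang), once $k$ is large, $\G$ with $\lmin(\G)\ge-3$ is the slim subgraph of a fat Hoffman graph $\mathfrak h$ with $\lmin(\mathfrak h)\ge-3$. This $\mathfrak h$ is obtained by attaching a fat vertex to every sufficiently large clique (a ``quasi-clique'' in the sense of the Ramsey/clique-geometry argument). Moreover, $\mathfrak h$ is built from the finitely many fat indecomposable Hoffman graphs with smallest eigenvalue at least $-3$. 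Every slim vertex then has at most three fat neighbours, counted with multiplicity in the special matrix. $2$-integrability always holds, so the obstruction to $1$-integrability is local: it comes only from those indecomposable pieces whose reduced representation needs the lattice $D_n$ or a half-integral ($E$-type) lattice rather than $\mathbb Z^n$. The plan is to show that, when $c\ge3$, every slim vertex has exactly the ``standard'' local structure. That structure is either three fat neighbours each joined by a single edge (vector $e_{F_1}+e_{F_2}+e_{F_3}$), or a fat neighbour with multiplicity. Gluing the local integral vectors then gives a global integral representation of $A+3I$.

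\textbf{Step 1: a counting lemma converting $c$ into local geometry.} Take $x$ and its fat neighbours $F_1,\dots,F_t$ ($t\le3$), whose cliques $C_i$ have size roughly $k/t$ or larger. Large $k$ and regularity force every vertex to lie in about the same clique pattern. Choose $z$ at distance $2$ from $x$ that lies in some $C_i$-adjacent clique. Then $|N(x)\cap N(z)|$ can be computed from how fat cliques intersect. If $x$ has only two fat neighbours with a ``defect'' (a type-$D$ local piece, e.g.\ $\mathfrak h$ locally containing the fat $(-3)$-graphs whose special matrix is not diagonalizable over $\mathbb Z$), then some such $z$ has at most $2$ common neighbours with $x$. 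The point is that the fat structure allows $z$ to meet $N(x)$ only inside one small overlap. This contradicts $c\ge3$. The extremal example $\Sh\square K_t$ in the abstract shows that $c=2$ is exactly where such defects survive, which calibrates the lemma. So this step is a finite case check over the list of fat indecomposable Hoffman graphs with $\lmin\ge-3$. For each non-integrable type, I would exhibit a distance-$2$ pair with at most $2$ common neighbours, using $k\ge\kappa_4$ so that clique sizes dominate bounded error terms.

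\textbf{Step 2: consistency of multiplicities and exclusion of exceptional pieces.} Sesqui-regularity with fixed $c$ forces all slim vertices to be of one type up to bounded exceptions. I would then use Theorem~\ref{thm:large-valency-alternative} together with Theorem~\ref{thm:extreme} to dispose of the dense case $n-k-1\le2$ directly, since there $\G$ is a complement of a sparse graph. In the remaining case, every slim vertex's representation in $\mathfrak h$ has norm $3$ with integral coordinates relative to the fat vertices. Sending each slim $x$ to $\sum_F m_{xF}e_F$ plus a correction vector for the slim-slim adjacencies not explained by shared fat vertices then yields the Gram matrix $A+3I$. The correction lives in a bounded local lattice, which we must show is $\mathbb Z^m$, not $D_m$ or $E_8$.

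\textbf{Main obstacle.} Step 1 is the hardest: enumerating the fat $(-3)$-indecomposable Hoffman graphs whose reduced lattice is non-integrable, and producing for each a distance-$2$ witness with fewer than $3$ common neighbours. I would first attack it through the reduced representation norm argument. A non-integrable piece contains vectors like $\tfrac12(\pm1,\dots,\pm1)$ or $D_4$-type configurations. In such configurations two slim vertices share a fat neighbour but have inner product $0$, which limits their common slim neighbours to those of a fixed bounded configuration. Verifying that this bound is $\le2$ rather than merely bounded is the delicate part.
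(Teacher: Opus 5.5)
Your proposal has a genuine gap. The decisive step is deferred to a ``finite case check'' that targets the wrong configurations, and the mechanism you expect to find there is not the one that works.

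First, the obstruction is misidentified. By Theorem~\ref{thm:two-fat-integral}, a fat indecomposable factor with smallest eigenvalue at least $-3$ in which some slim vertex has two or more fat neighbours already has an integral reduced representation of norm $3$. So there are no ``two fat neighbours with a defect'' obstructions. Nor are $D_n$-type pieces ever obstructions, since $D_n\subset\Z^n$. The only possible bad factor $\mathfrak g$ has exactly one fat neighbour per slim vertex. Then, using that quasi-cliques are cliques (Theorem~\ref{thm:hoffman-cover}(iii), which you never use), its special graph $D$ is an ordinary graph with $A(D)=\Sp(\mathfrak g)+I$ and $\lmin(D)\ge-2$. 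It is not $1$-integrable, hence has at most $36$ vertices by Theorem~\ref{thm:large-minus-two-integrable}. Without this reduction there is no finite list to check. Fat indecomposable Hoffman graphs with $\lmin\ge-3$ are not finitely many: give each slim vertex its own fat neighbour and take any connected line graph as the slim (and special) graph.

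Second, your claim that a bad piece yields a distance-two pair with at most two common neighbours, because ``$z$ meets $N(x)$ only inside one small overlap'', is unjustified. Take $x\sim_D y$ and $z\in C(f_y)\setminus\Vslim(\mathfrak g)$. The common neighbours of $x$ and $z$ are $\bigl(N_\G(z)\cap C(f_x)\bigr)\,\dot\cup\,\bigl(N_D(x)\cap F_{f_y}\bigr)$. The first set has exactly $c-|N_D(x)\cap F_{f_y}|$ elements, and nothing in the fat structure keeps it small. Local inspection alone therefore cannot produce a pair with fewer than $c$ common neighbours.

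The paper handles this by double counting edges between $C(f_x)\setminus F_{f_x}$ and $C(f_y)\setminus F_{f_y}$, both of size at least $k-35$. It combines this with $m+m'\ge c$, obtained from a nonadjacent pair across the two cliques that would otherwise have more than $c$ common neighbours. The result is $|N_D(x)\cap F_{f_y}|\ge2$, and $\ge3$ when $c\ge5$.

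The final contradiction is then spectral and structural, not a case check. For $c\ge5$, a bipartite induced subgraph of minimum valency $3$ forces $\lmin(D)\le-3$. For $c\in\{3,4\}$, the two-class components are even cycles and $D$ is regular. It is also locally $\operatorname{CP}(r)$, since otherwise a nonadjacent pair would have at least five common neighbours. Hence $D$ is a cycle or $\operatorname{CP}(r+1)$, which is $1$-integrable, a contradiction.

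Your Step~2 only restates the goal (``show the correction lattice is $\Z^m$''). The ``dense case'' $n-k-1\le2$ is vacuous here, since $n-k-1\ge k-c+1$. Also, Theorem~\ref{thm:extreme} concerns $c\in\{k-1,k\}$, not that case.
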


Theorem~\ref{thm:main} extends the known range $c\geq9$ to the optimal
threshold $c\geq3$. In particular, it shows that sesqui-regularity with
$c\geq3$ provides enough additional structure to strengthen the general
large valency conclusion from $2$-integrability to $1$-integrability.
Theorem~\ref{thm:sharpness} shows that the lower bound on $c$ is best possible.
For graphs $G$ and $H$, their
\emph{Cartesian product} $G\square H$ has vertex set $V(G)\times V(H)$,
where $(x,i)$ and $(y,j)$ are adjacent precisely when either $x=y$ and
$i\sim_H j$, or $i=j$ and $x\sim_G y$.  Let $\Sh$ denote the Shrikhande graph, which is the strongly regular graph with parameters $(16,6,2,2)$.

\begin{thm}\label{thm:sharpness}
For every positive integer $t\geq2$, the graph 
\[
 \G_t=\Sh \square K_t
\]
is connected and sesqui-regular with parameters
\[
 (n,k,c)=(16t,t+5,2).
\]
Moreover, $\lmin(\G_t)=-3$, and $\G_t$ is not $1$-integrable.
\end{thm}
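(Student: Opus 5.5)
The first three assertions of Theorem~\ref{thm:sharpness} follow from the structure of a Cartesian product and need only short arguments. The main work is the non-integrability, which I plan to show by a local case analysis of norm-$3$ integral vectors.

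\emph{Connectivity and parameters.} $\G_t$ is connected because $\Sh$ and $K_t$ are. It is regular of valency $6+(t-1)=t+5$ and has $16t$ vertices.

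\emph{Sesqui-regularity.} Take two vertices $(x,i),(y,j)$ at distance two. Since any two distinct vertices of $K_t$ are adjacent, there are only two cases.
\begin{itemize}
\item $x\sim_{\Sh} y$ and $i\neq j$. The common neighbours are exactly $(x,j)$ and $(y,i)$, so there are $2$ of them.
\item $i=j$ and $\dist_{\Sh}(x,y)=2$. Every common neighbour lies in the layer $\Sh\times\{i\}$, so the count equals the parameter $\mu=2$ of $\Sh$.
\end{itemize}
Hence $c=2$.

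\emph{Spectrum.} The eigenvalues of a Cartesian product are the sums $\theta+\eta$ with $\theta\in\{6,2,-2\}$ from $\Sh$ and $\eta\in\{t-1,-1\}$ from $K_t$. The smallest is $-2-1=-3$, so $\lmin(\G_t)=-3$ and $\lceil-\lmin\rceil=3$.

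\emph{Non-integrability, reduction.} I would first note that $1$-integrability passes to induced subgraphs, provided we keep the norm $3$ fixed: restricting a representation of $\G_t$ gives integral vectors of norm $3$ with inner product $1$ on edges and $0$ on non-edges. So it suffices to find a fixed induced subgraph $H$ of $\Sh\square K_2$ that admits no such representation. That subgraph is contained in every $\G_t$ with $t\ge2$.

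\emph{Non-integrability, local analysis.} An integral vector of norm $3$ has the form $\pm e_a\pm e_b\pm e_c$ with distinct $a,b,c$. Up to signed coordinate permutations, put $(x,1)\mapsto e_1+e_2+e_3$. Each neighbour $v$ of $(x,1)$ satisfies $(v,e_1+e_2+e_3)=1$. This allows only two shapes:
\begin{itemize}
\item vectors meeting $\{1,2,3\}$ in exactly one positively signed coordinate, for example $e_1\pm e_a\pm e_b$ with $a,b\ge4$;
\item the three vectors obtained by flipping one sign, such as $e_1+e_2-e_3$.
\end{itemize}
The local graph at $(x,1)$ is a hexagon $C_6$ (the local graph of $\Sh$) together with the vertex $(x,2)$, with no edges between them. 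So I would:
\begin{enumerate}
\item classify representations of the hexagon $y_1,\dots,y_6$, where consecutive vectors have inner product $1$, non-consecutive ones are orthogonal, and all have inner product $1$ with $(x,1)$;
\item require a seventh vector $z$, representing $(x,2)$, with $(z,e_1+e_2+e_3)=1$ and $(z,y_i)=0$ for all $i$.
\end{enumerate}
Orthogonality to $y_i$ together with the constraint from $(x,1)$ should sharply limit which of $e_1,e_2,e_3$ and which "new" coordinates $z$ may use.

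\emph{Main obstacle.} The hardest step is making this case analysis close. The hexagon plus one isolated vertex may still be representable, for instance if the $y_i$ use the coordinates $1,2,3$ in a pattern that leaves room for $z$. If so, I would enlarge $H$ step by step: add the vertices $(y_i,2)$, which are adjacent to $y_i$ and to $z$; then add the second-layer neighbours of $(x,1)$ inside $\Sh$, using the fact that any two vertices at distance two have exactly two common neighbours. I would propagate the forced coordinate patterns until two vectors required to be orthogonal share a coordinate with a nonzero contribution.

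As a fallback for this step, $H$ is a fixed finite graph independent of $t$. So an exhaustive search over representations by norm-$3$ vectors (up to signed permutations of coordinates) certifies non-representability, and that certificate could be recorded instead.
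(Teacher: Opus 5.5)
Your arguments for connectivity, the parameters $(16t,t+5,2)$, sesqui-regularity and $\lmin(\G_t)=-3$ are correct and agree with the paper. So does the reduction to an induced subgraph of $\Sh\square K_2$ with the norm kept at $3$. The gap is that the non-integrability, the only substantial assertion, is not proved. You describe a case analysis, concede that you do not know where it closes, and fall back on a computer search over an unspecified $H$.

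The first stage of your analysis does not in fact close. Put $(x,1)\mapsto e_1+e_2+e_3$ and take
\[
 y_1=e_1+e_4+e_5,\quad y_2=e_1+e_6+e_7,\quad y_3=e_2+e_6+e_8,\quad y_4=e_2+e_9+e_{10},
\]
\[
 y_5=e_3+e_9+e_{11},\quad y_6=e_3+e_4+e_{12},\quad z=e_1-e_5-e_7.
\]
These norm-$3$ vectors realize the closed neighbourhood of $(x,1)$, that is, the hexagon together with $z=(x,2)$. Nothing in your proposal explains why adding the $(y_i,2)$ and further vertices must eventually force a coordinate clash. In fact the real obstruction is not of that local type.

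The missing idea is global and lattice-theoretic. Let $u_x,v_x$ represent $(x,1),(x,2)$. The paper sets $a_x=u_x-v_x$ and $b_x=u_x+v_x$, so that $(a_x,a_y)=2(A+2I)_{xy}$ and $a_x\perp b_y$.

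\begin{itemize}
\item The $a_x$ of seven suitable vertices generate $\Lambda\cong\sqrt2E_7$, which spans a $7$-dimensional space $W$.
\item Each projection $p_i=\pi_W(e_i)$ lies in $\Lambda^*\cong\frac1{\sqrt2}E_7^*$ and has norm at most $1$. Hence its norm is $0$, $\frac34$ or $1$.
\item Norm $1$ forces $e_i\in W$. Then $e_i\perp b_x$, so $(e_i,a_x)=2(e_i,u_x)$ is even. This puts $\frac12e_i$ in $\Lambda^*$ with norm $\frac14$, which is impossible.
\item Therefore $7=\operatorname{tr}\pi_W=\sum_i(p_i,p_i)\in\frac34\mathbb Z$, a contradiction.
\end{itemize}

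The lattice $\Lambda$ alone gives no contradiction, since $\sqrt2E_7$ embeds in $\mathbb Z^7$. What does the work is the congruence $a_x\equiv b_x\pmod 2$, which ties the two layers together. This argument uses only the seven chosen vertices in both layers. So the $14$-vertex induced subgraph on $\{x_1,\dots,x_7\}\times V(K_2)$ already suffices, and that is where an exhaustive search would have to be aimed. As written, however, your proposal does not establish the theorem.
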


The remainder of the paper is organized as follows. Section~\ref{sec:preliminaries}
reviews the required material on Hoffman graphs and
integral representations. Section~\ref{sec:proof-main} proves
Theorem~\ref{thm:main} by reducing a hypothetical non-integrable Hoffman
factor to its special graph and treating the ranges $c\geq5$ and
$c\in\{3,4\}$ separately. Section \ref{sec:sharpness} proves
Theorem~\ref{thm:sharpness}.

\section{Preliminaries}\label{sec:preliminaries}

This section collects the definitions and results on
Hoffman graphs and integral representations used in the proof of Theorem~\ref{thm:main}.

\subsection{Hoffman graphs}
\begin{de}A \emph{Hoffman graph} $\mathfrak{h}$ is a pair $(H,\ell)$, where $H=(V(H),E(H))$ is a graph and $\ell:V(H) \to\{f,s\}$ is a labeling map satisfying the following conditions:
	\begin{enumerate}
		\item vertices with label $f$ are pairwise non-adjacent,
		\item every vertex with label $f$ is adjacent to at least one vertex.
	\end{enumerate}
\end{de}
A vertex with label $s$ is called a \emph{slim vertex}, and a vertex with label $f$
is called a \emph{fat vertex}. The sets of slim and fat vertices are denoted by 
$\Vslim(\mathfrak h)$ and
$\Vfat(\mathfrak h)$, respectively. The subgraph
induced by $\Vslim(\mathfrak h)$ is called the \emph{slim graph} of $\mathfrak{h}$.  

For a slim vertex $x$, write $\Nfat_{\mathfrak h}(x)$ for the set of its
fat neighbors, and put
$\Nfat_{\mathfrak h}(x,y)=\Nfat_{\mathfrak h}(x)\cap
\Nfat_{\mathfrak h}(y)$.  A Hoffman graph is \emph{fat} when every slim
vertex has a fat neighbor.  For $f\in\Vfat(\mathfrak h)$, the
\emph{quasi-clique} $Q_{\mathfrak h}(f)$ is the subgraph of the slim graph
induced by the slim neighbors of $f$.

The \emph{special matrix} of $\mathfrak h$ is indexed by its slim vertices and is
defined by
\begin{equation}\label{eq:special-matrix}
 \Sp(\mathfrak h)_{xy}=
 \begin{cases}
 -|\Nfat_{\mathfrak h}(x)|,&x=y,\\
 1-|\Nfat_{\mathfrak h}(x,y)|,&x\sim y,\\
 -|\Nfat_{\mathfrak h}(x,y)|,&x\not\sim y.
 \end{cases}
\end{equation}
The \emph{eigenvalues} of $\mathfrak h$ are those of $\Sp(\mathfrak h)$, and the smallest one is denoted by $\lambda_{\min}(\mathfrak{h})$.

The \emph{special graph} of $\mathfrak h$ is the signed graph on
$\Vslim(\mathfrak h)$ in which two distinct vertices are joined by a
positive or negative edge according to the sign of the corresponding
off-diagonal entry of $\Sp(\mathfrak h)$.

A Hoffman graph $\mathfrak{h}_1:= (H_1, \ell_1)$ is an \emph{induced}
(respectively, \emph{proper induced}) \emph{Hoffman subgraph} of
$\mathfrak{h}:=(H, \ell)$ if $H_1$ is an induced (respectively, proper
induced) subgraph of $H$ and $\ell_1(x) = \ell(x)$ for every vertex $x$ of
$H_1$.

\vspace{0.1cm}

The following interlacing result for induced Hoffman subgraphs is due to Woo and Neumaier.

\begin{lemma}[{\cite[Corollary~3.3]{WooNeumaier1995}}]
\label{lem:hoffman-interlacing}
If $\mathfrak h_1$ is an induced Hoffman subgraph of $\mathfrak h$, then
\[
 \lmin(\mathfrak h_1)\ge\lmin(\mathfrak h).
\]
\end{lemma}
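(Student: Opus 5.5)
The plan is to write the special matrix as an adjacency matrix minus a Gram matrix, and then combine Cauchy interlacing with monotonicity of eigenvalues under adding a positive semidefinite matrix.

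\textbf{Step 1: factor the special matrix.} Let $S=\Vslim(\mathfrak h)$ and $F=\Vfat(\mathfrak h)$. Let $A$ be the adjacency matrix of the slim graph of $\mathfrak h$, and let $C$ be the $S\times F$ incidence matrix with $C_{xf}=1$ if $x\sim f$ and $0$ otherwise. Then $(CC^{\mathsf T})_{xy}=|\Nfat_{\mathfrak h}(x,y)|$ for $x\neq y$, and $(CC^{\mathsf T})_{xx}=|\Nfat_{\mathfrak h}(x)|$. Comparing with \eqref{eq:special-matrix} entry by entry gives
\[
 \Sp(\mathfrak h)=A-CC^{\mathsf T}.
\]

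\textbf{Step 2: compare with the subgraph.} Let $\mathfrak h_1$ have slim vertex set $S_1\subseteq S$ and fat vertex set $F_1\subseteq F$. Because $H_1$ is an induced subgraph with the inherited labelling, the slim graph of $\mathfrak h_1$ is the subgraph of the slim graph of $\mathfrak h$ induced on $S_1$. Its incidence matrix is the submatrix $C[S_1,F_1]$. Hence
\[
 \Sp(\mathfrak h_1)=A[S_1]-C[S_1,F_1]C[S_1,F_1]^{\mathsf T}.
\]
The principal submatrix of $\Sp(\mathfrak h)$ indexed by $S_1$ uses all fat columns:
\[
 \Sp(\mathfrak h)[S_1]=A[S_1]-C[S_1,F]C[S_1,F]^{\mathsf T}.
\]
Write $D=C[S_1,F\setminus F_1]$ for the columns of the fat vertices deleted in passing to $\mathfrak h_1$. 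Splitting the columns of $C[S_1,F]$ into $F_1$ and $F\setminus F_1$ gives
\[
 \Sp(\mathfrak h_1)=\Sp(\mathfrak h)[S_1]+DD^{\mathsf T}.
\]

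\textbf{Step 3: conclude.} The matrix $DD^{\mathsf T}$ is positive semidefinite, so by Weyl's inequality (or the Rayleigh quotient) we get $\lmin(\Sp(\mathfrak h_1))\ge\lmin(\Sp(\mathfrak h)[S_1])$. By Cauchy interlacing for principal submatrices of a real symmetric matrix, $\lmin(\Sp(\mathfrak h)[S_1])\ge\lmin(\Sp(\mathfrak h))$. Chaining the two inequalities gives $\lmin(\mathfrak h_1)\ge\lmin(\mathfrak h)$.

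There is no real obstacle here. The one point needing care is Step 2: deleting fat vertices changes the diagonal and off-diagonal entries of the special matrix, and one must check that this change is exactly the positive semidefinite correction $DD^{\mathsf T}$ rather than something of indefinite sign.
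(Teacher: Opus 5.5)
Your proof is correct. The paper gives no argument of its own; it only cites Woo and Neumaier. So the relevant comparison is with the usual derivation in that literature, which differs from yours.

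\textbf{The standard route.} There the inequality is a corollary of Hoffman's limit theorem. Let $G(\mathfrak h,p)$ be the graph obtained by replacing each fat vertex $f$ by a clique $K_p$ whose vertices are all joined to the slim neighbours of $f$. Then $\lmin(G(\mathfrak h,p))\ge\lmin(\mathfrak h)$ and $\lmin(G(\mathfrak h,p))\to\lmin(\mathfrak h)$ as $p\to\infty$. Since $G(\mathfrak h_1,p)$ is an induced subgraph of $G(\mathfrak h,p)$, ordinary interlacing gives $\lmin(G(\mathfrak h_1,p))\ge\lmin(G(\mathfrak h,p))\ge\lmin(\mathfrak h)$. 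Letting $p\to\infty$ then gives the claim.

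\textbf{Your route.} You bypass the limit theorem entirely. The identity $\Sp(\mathfrak h_1)=\Sp(\mathfrak h)[S_1]+DD^{\mathsf T}$ says that $\Sp(\mathfrak h_1)$ dominates a principal submatrix of $\Sp(\mathfrak h)$ in the Loewner order. Weyl's inequality combined with Cauchy interlacing therefore gives $\lambda_i(\Sp(\mathfrak h_1))\ge\lambda_i(\Sp(\mathfrak h))$ for every $i\le|S_1|$ (eigenvalues in increasing order), not only for the smallest one.

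\textbf{What each buys.} Your argument is shorter, finite, purely matrix-theoretic, and gives this stronger conclusion. The limit-theorem route instead interprets $\lmin(\mathfrak h)$ as the limiting smallest eigenvalue of genuine graphs. That interpretation is why Hoffman graphs are useful for studying graphs with bounded smallest eigenvalue in the first place. For the lemma as stated, your direct proof is the cleaner one.
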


For Hoffman graphs $\mathfrak h_1$ and $\mathfrak h_2$, their \emph{sum},
denoted by $\mathfrak h_1\uplus\mathfrak h_2$, is the Hoffman graph whose
slim vertex set is $\Vslim(\mathfrak h_1)\cup\Vslim(\mathfrak h_2)$ and
whose special matrix, after the corresponding ordering of the slim
vertices, is the block-diagonal matrix
\[
\begin{pmatrix}
    \Sp(\mathfrak h_1)&\\
    &\Sp(\mathfrak h_2)
\end{pmatrix}.
\]
If $\mathfrak{h}=\mathfrak{h}_1\uplus\mathfrak{h}_2$ for two nonempty
Hoffman subgraphs $\mathfrak h_1$ and $\mathfrak h_2$, then
$\mathfrak h$ is \emph{decomposable}, and $\mathfrak h_1$ and
$\mathfrak h_2$ are called \emph{factors} of $\mathfrak h$. Otherwise,
$\mathfrak h$ is \emph{indecomposable}. A Hoffman graph is
indecomposable if and only if its special graph is connected.

The sum of two Hoffman graphs has the following combinatorial
description.

\begin{lemma}[{\cite[Lemma~2.11]{KoolenYangYang2019}}]
\label{lem:sum-criterion}
Suppose that $\mathfrak h=\mathfrak h_1\uplus\mathfrak h_2$.  If
$x\in\Vslim(\mathfrak h_1)$ and
$y\in\Vslim(\mathfrak h_2)$, then $x,y$ have at most one common fat
neighbor, and they are adjacent in the slim graph if and only if they have
exactly one common fat neighbor.  Moreover, every fat neighbor in
$\mathfrak h$ of a slim vertex of $\mathfrak h_i$ belongs to
$\mathfrak h_i$.
\end{lemma}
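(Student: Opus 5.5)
The statement to prove is Lemma~\ref{lem:sum-criterion}: if $\mathfrak h=\mathfrak h_1\uplus\mathfrak h_2$, $x\in\Vslim(\mathfrak h_1)$ and $y\in\Vslim(\mathfrak h_2)$, then $x,y$ share at most one fat neighbor and are adjacent exactly when they share one, and every fat neighbor of a slim vertex of $\mathfrak h_i$ lies in $\mathfrak h_i$. The plan is to compare the $(x,y)$ entry of $\Sp(\mathfrak h)$ with the block-diagonal form. We also use the diagonal entries: the sum is defined by the special matrix, yet $\mathfrak h_1,\mathfrak h_2$ are Hoffman subgraphs of $\mathfrak h$, so their diagonal entries must agree with those of $\mathfrak h$.

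\textbf{Step 1: off-diagonal entries.} Since $\Sp(\mathfrak h)$ is block diagonal with respect to $\Vslim(\mathfrak h_1)\cup\Vslim(\mathfrak h_2)$, we have $\Sp(\mathfrak h)_{xy}=0$ for $x\in\Vslim(\mathfrak h_1)$ and $y\in\Vslim(\mathfrak h_2)$. Read this against \eqref{eq:special-matrix}.
\begin{itemize}
\item If $x\not\sim y$, then $-|\Nfat_{\mathfrak h}(x,y)|=0$, so $x$ and $y$ have no common fat neighbor.
\item If $x\sim y$, then $1-|\Nfat_{\mathfrak h}(x,y)|=0$, so they have exactly one common fat neighbor.
\end{itemize}
Thus the common fat neighborhood has size at most one, and it has size one if and only if $x\sim y$. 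This gives the first two assertions.

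\textbf{Step 2: fat neighbors stay inside the factor.} The diagonal entry of $\Sp(\mathfrak h)$ at $x$ is $-|\Nfat_{\mathfrak h}(x)|$. In the block-diagonal form it equals $\Sp(\mathfrak h_1)_{xx}=-|\Nfat_{\mathfrak h_1}(x)|$. Because $\mathfrak h_1$ is an induced Hoffman subgraph, $\Nfat_{\mathfrak h_1}(x)=\Nfat_{\mathfrak h}(x)\cap V(\mathfrak h_1)$. Equality of the two cardinalities therefore forces $\Nfat_{\mathfrak h}(x)\subseteq V(\mathfrak h_1)$. The same argument applies to slim vertices of $\mathfrak h_2$.

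\textbf{Main obstacle.} The only delicate point is interpretive. The sum is defined through special matrices, so one must use that the factors are genuine induced Hoffman subgraphs of $\mathfrak h$ for their diagonal entries to be comparable. Once that is fixed, each assertion is a direct entry-by-entry comparison.
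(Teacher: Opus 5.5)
Your proof is correct. The paper gives no argument of its own for this lemma; it only cites \cite[Lemma~2.11]{KoolenYangYang2019}, so there is nothing to compare step by step. Your entry-by-entry comparison of $\Sp(\mathfrak h)$ with the block-diagonal form is exactly the verification that the paper's matrix-based definition of $\uplus$ calls for: the zero off-diagonal block gives the two statements about common fat neighbors, and the diagonal entries give the containment of fat neighborhoods. In the Hoffman-graph literature the sum is usually \emph{defined} by these combinatorial conditions, with the block-diagonal shape of the special matrix as the derived fact. Under the definition adopted here, your argument supplies the reverse direction. You are also right that it only makes sense once $\mathfrak h_1,\mathfrak h_2$ are read as Hoffman subgraphs of $\mathfrak h$ with disjoint slim vertex sets; this disjointness is also what guarantees $x\neq y$ in Step~1. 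A minor sharpening: Step~2 does not need inducedness. For any Hoffman subgraph one has $\Nfat_{\mathfrak h_1}(x)\subseteq\Nfat_{\mathfrak h}(x)$, so equality of these finite cardinalities already forces $\Nfat_{\mathfrak h_1}(x)=\Nfat_{\mathfrak h}(x)$.
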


\subsection{Representations and reduced representations of Hoffman graphs}
An \emph{integral representation of norm $t$} of a Hoffman graph
$\mathfrak h$ is a map
$\boldsymbol{\phi}:\Vslim(\mathfrak h)\cup\Vfat(\mathfrak h)\to\Z^m$ for some $m$
such that
\[
 \left(\boldsymbol{\phi}(x),\boldsymbol{\phi}(y)\right)=
 \begin{cases}
 t,&x=y\in\Vslim(\mathfrak h),\\
 1,&x=y\in\Vfat(\mathfrak h),\\
 1,&x\sim y,\\
 0,&\text{otherwise}.
 \end{cases}
\]
An \emph{integral reduced representation of norm $t$} of  
$\mathfrak h$ is a map
$\boldsymbol{\psi}:\Vslim(\mathfrak h)\to\Z^m$ for some $m$
such that
	\[
	(\boldsymbol{\psi}(x),\boldsymbol{\psi}(y))=\left\{
	\begin{array}{ll}
		t-|N_\mathfrak{h}^{\mathrm{fat}}(x)| & \text{if } x=y, \\
		1-|N_\mathfrak{h}^{\mathrm{fat}}(x,y)| & \text{if } x\sim y, \\
		-|N_\mathfrak{h}^{\mathrm{fat}}(x,y)| & \text{otherwise},
	\end{array}
	\right.
	\]

The following two results are due to Jang, Koolen, Munemasa and Taniguchi. 

\begin{theorem}[{\cite[Theorem~2.8]{JangKoolenMunemasaTaniguchi2014}}]
\label{thm:representation-equivalence}
A Hoffman graph has an integral representation of norm $t$ if and only if it
has an integral reduced representation of norm $t$.
\end{theorem}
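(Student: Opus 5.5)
The statement to prove is Theorem~\ref{thm:representation-equivalence}: a Hoffman graph $\mathfrak h$ has an integral representation of norm $t$ if and only if it has an integral reduced representation of norm $t$. I will pass between the two notions by orthogonal projection away from the fat vectors, and in the reverse direction by adjoining fresh coordinates for the fat vertices. Integrality is the only delicate point in each direction.

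For the forward direction, let $\boldsymbol\phi$ be an integral representation of norm $t$. Each fat vector $\boldsymbol\phi(f)$ is an integral vector of norm $1$, so it equals $\pm e_i$ for some standard basis vector $e_i$. Two fat vertices are nonadjacent, so distinct fat vectors are orthogonal and occupy distinct coordinates. Define
\[
 \boldsymbol\psi(x)=\boldsymbol\phi(x)-\sum_{f\in\Vfat(\mathfrak h)}\left(\boldsymbol\phi(x),\boldsymbol\phi(f)\right)\boldsymbol\phi(f),
\]
which deletes the fat coordinates of $\boldsymbol\phi(x)$. This vector is integral, and the sum reduces to $\sum_{f\in\Nfat(x)}\boldsymbol\phi(f)$, because $(\boldsymbol\phi(x),\boldsymbol\phi(f))$ is $1$ when $f$ is a fat neighbor of $x$ and $0$ otherwise. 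Expanding the inner product gives
\[
 (\boldsymbol\psi(x),\boldsymbol\psi(y))=(\boldsymbol\phi(x),\boldsymbol\phi(y))-|\Nfat(x,y)|.
\]
Evaluating this for $x=y$, for $x\sim y$, and for nonadjacent $x\neq y$ yields exactly $t-|\Nfat(x)|$, $1-|\Nfat(x,y)|$ and $-|\Nfat(x,y)|$. Hence $\boldsymbol\psi$ is an integral reduced representation of norm $t$.

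For the converse, let $\boldsymbol\psi:\Vslim(\mathfrak h)\to\Z^m$ be an integral reduced representation of norm $t$. Extend $\Z^m$ to $\Z^{m}\oplus\Z^{\Vfat(\mathfrak h)}$ by adding one new standard coordinate $e_f$ for each fat vertex $f$. Set
\[
 \boldsymbol\phi(f)=e_f,\qquad \boldsymbol\phi(x)=\boldsymbol\psi(x)+\sum_{f\in\Nfat(x)}e_f .
\]
Then $(\boldsymbol\phi(x),\boldsymbol\phi(y))=(\boldsymbol\psi(x),\boldsymbol\psi(y))+|\Nfat(x,y)|$, which recovers $t$, $1$ or $0$ in the three slim cases. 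For fat vertices $f\neq g$ we have $(e_f,e_g)=0$, in accordance with the requirement that fat vertices be nonadjacent. For a slim $x$ and fat $f$, the product $(\boldsymbol\phi(x),\boldsymbol\phi(f))$ equals $1$ exactly when $f\sim x$, and $0$ otherwise. All vectors are integral, so $\boldsymbol\phi$ is an integral representation of norm $t$.

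The main point needing care is the first direction. There one must use that an integral vector of norm $1$ is a signed standard basis vector, so that subtracting the fat components preserves integrality. Everything else is direct bilinear expansion.
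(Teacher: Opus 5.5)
Your proof is correct. The paper does not prove this statement; it quotes it from Jang--Koolen--Munemasa--Taniguchi. Your argument is the standard one: project away from the orthonormal fat vectors in one direction, and adjoin a fresh coordinate $e_f$ for each fat vertex in the other.

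One small correction to your closing remark. Integrality of $\boldsymbol\psi(x)=\boldsymbol\phi(x)-\sum_{f\in\Nfat_{\mathfrak h}(x)}\boldsymbol\phi(f)$ is automatic, since it is an integer combination of integral vectors. So you do not actually need the fact that integral vectors of norm $1$ are $\pm e_i$. The computation only uses that the fat vectors are orthonormal, which follows directly from the definition of an integral representation.
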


\begin{theorem}[{\cite[Theorem~3.7]{JangKoolenMunemasaTaniguchi2014}}]
\label{thm:two-fat-integral}
Let $\mathfrak g$ be a fat indecomposable Hoffman graph with smallest
eigenvalue at least $-3$.  If one of its slim vertices has at least two fat
neighbors, then $\mathfrak g$ has an integral reduced representation of norm
$3$.
\end{theorem}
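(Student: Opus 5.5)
The plan is to realize the Gram matrix $M:=\Sp(\mathfrak g)+3I$ by integral vectors, using the classification of integral lattices generated by short vectors. Since $\lmin(\mathfrak g)\ge -3$, the matrix $M$ is a positive semidefinite integral matrix. Hence there are real vectors $\boldsymbol{\psi}(x)$, $x\in\Vslim(\mathfrak g)$, with Gram matrix $M$, and these vectors satisfy exactly the inner product conditions of an integral reduced representation of norm $3$, except possibly for integrality. It therefore suffices to show that the lattice $L$ they generate embeds isometrically into some $\Z^m$.

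\textbf{Step 1: norms and a trivial case.} Because $\mathfrak g$ is fat, the diagonal entries of $M$ are $3-|\Nfat_{\mathfrak g}(x)|\le 2$. If some slim vertex $x$ has $3$ fat neighbors, then $\boldsymbol\psi(x)=0$, so the row of $x$ in $M$ vanishes. Thus $x$ is isolated in the special graph. By indecomposability, $\mathfrak g$ then has the single slim vertex $x$, and $\boldsymbol\psi(x)=0\in\Z^1$ works. Otherwise every vector $\boldsymbol\psi(x)$ has norm $1$ or $2$. By hypothesis, some vertex $x_0$ has exactly two fat neighbors, so $\boldsymbol\psi(x_0)$ has norm $1$.

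\textbf{Step 2: structure of $L$.} The lattice $L$ is integral and generated by vectors of norm at most $2$. By the classical (Witt) classification I will use that
\[
L\cong \Z^a\oplus R,
\]
where $R$ is an orthogonal sum of irreducible root lattices of types $A_n$, $D_n$, $E_6$, $E_7$, $E_8$. Moreover:
\begin{itemize}
\item the norm-$1$ vectors of $L$ are exactly the vectors $\pm e_i$ of $\Z^a$;
\item each norm-$2$ vector of $L$ either has the form $\pm e_i\pm e_j$ in $\Z^a$ or lies in a single irreducible component of $R$.
\end{itemize}
Quoting this structure theorem correctly, including the description of the short vectors, is the step I expect to need the most care.

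\textbf{Step 3: excluding $E$-components via indecomposability.} Suppose $R$ had a component $E$ of type $E_6$, $E_7$ or $E_8$. By Step 2, every generator $\boldsymbol\psi(x)$ either lies in $E$ or is orthogonal to $E$. Since the generators span $L$, some generator lies in $E$. Now two slim vertices are joined in the special graph precisely when their vectors have nonzero inner product. Hence the generators lying in $E$ form a union of connected components of the special graph. This union does not contain $x_0$, because $\boldsymbol\psi(x_0)$ has norm $1$ and so lies in $\Z^a$. This contradicts the connectivity of the special graph of the indecomposable $\mathfrak g$. So $R$ is a sum of lattices of types $A_n$ and $D_n$.

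\textbf{Step 4: embedding into $\Z^m$.} The root lattices $A_n$ and $D_n$ embed isometrically into $\Z^{n+1}$ and $\Z^n$, respectively. Therefore $L$ embeds into some $\Z^m$. Composing $\boldsymbol\psi$ with this embedding gives an integral reduced representation of norm $3$.
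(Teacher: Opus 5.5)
Your proof is correct and is essentially the standard lattice argument behind the cited result of Jang--Koolen--Munemasa--Taniguchi; the paper itself only quotes that result and gives no proof. The key points are that splitting off the norm-$1$ vectors gives $L\cong\Z^a\oplus R$ with $R$ an even root lattice, and that connectivity of the special graph does the rest. The splitting is elementary: norm-$1$ vectors of an integral lattice are pairwise orthogonal up to sign, and the projection of $L$ onto their span lies in $L$.

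In fact, your Step~3 reasoning applied to the summand $\Z^a$ itself, which contains $\boldsymbol\psi(x_0)$, shows that every $\boldsymbol\psi(x)$ lies in $\Z^a$. Hence $L\cong\Z^a$, and Step~4 is not needed.
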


\section{Proof of Theorem~\ref{thm:main}}\label{sec:proof-main}
This section proves Theorem~\ref{thm:main}. Since eigenvalue interlacing for induced subgraphs will be used repeatedly, the required form is stated as follows. 

\begin{lemma}[{\cite[Theorem~9.1.1]{GodsilRoyle2001}}]
\label{lem:interlacing}
If $G$ is a graph and $G'$ is an induced subgraph of $G$, then
\[
 \lambda_{\min}(G')\geq\lambda_{\min}(G).
\]
\end{lemma}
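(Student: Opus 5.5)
The statement to prove is Lemma~\ref{lem:interlacing}: if $G'$ is an induced subgraph of $G$, then $\lmin(G')\ge\lmin(G)$. The plan is to use the Rayleigh quotient characterization of the smallest eigenvalue of a real symmetric matrix. First I would record the basic fact that for a real symmetric matrix $M$ of order $m$,
\[
 \lambda_{\min}(M)=\min_{\mathbf{0}\neq \mathbf{u}\in\mathbb{R}^{m}}\frac{\mathbf{u}^{\mathsf T}M\mathbf{u}}{\mathbf{u}^{\mathsf T}\mathbf{u}}.
\]
This follows from the spectral theorem: diagonalize $M$ by an orthogonal matrix and write $\mathbf{u}$ in an orthonormal eigenbasis. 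The quotient is then a convex combination of the eigenvalues, so it is at least the smallest one, with equality attained at a corresponding eigenvector.

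Next I would relate the two adjacency matrices. Order $V(G)$ so that the vertices of $G'$ come first. Because $G'$ is induced, two vertices of $G'$ are adjacent in $G'$ exactly when they are adjacent in $G$. Hence $A(G')$ is the leading principal submatrix of $A(G)$ on $V(G')$.

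Now take a unit eigenvector $\mathbf{u}'$ of $A(G')$ for $\lmin(G')$, and extend it by zeros on $V(G)\setminus V(G')$ to a vector $\mathbf{u}$. Then $\mathbf{u}^{\mathsf T}\mathbf{u}=1$, and $\mathbf{u}^{\mathsf T}A(G)\mathbf{u}=\mathbf{u}'^{\mathsf T}A(G')\mathbf{u}'=\lmin(G')$, since the zero entries kill every term outside the principal block. Applying the variational formula to $A(G)$ gives $\lmin(G)\le\lmin(G')$, which is the claim.

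There is no real obstacle here. The only point requiring care is the role of inducedness: it is exactly what makes $A(G')$ a principal submatrix of $A(G)$. For a non-induced subgraph this fails, and in fact the inequality itself can fail: the path $P_3$ is a spanning subgraph of $K_3$, yet $\lmin(P_3)=-\sqrt{2}<-1=\lmin(K_3)$.
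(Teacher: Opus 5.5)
Your proof is correct. The paper gives no argument of its own: it simply cites Cauchy interlacing (Godsil--Royle, Theorem~9.1.1). Your Rayleigh-quotient argument, which extends a bottom eigenvector of the principal submatrix $A(G')$ by zeros, is essentially the smallest-eigenvalue case of that theorem's standard proof, and only this inequality is ever used in the paper.
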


The next theorem provides the bridge from sesqui-regular graphs to fat Hoffman graphs while preserving the lower bound on the smallest eigenvalue. 

\begin{theorem}[{\cite[Lemma~5.1]{KoolenYangYang2018} and \cite[Theorem~4.1]{KoolenGebremichelYangYang2023}}]\label{thm:hoffman-cover}
There exists a positive integer $\kappa_5$ such that the following holds.
Let $\G$ be a connected sesqui-regular graph with parameters $(n,k,c)$,
$n-k-1>2$, and $\lmin(\G)\ge-3$.  If $k\ge\kappa_5$, then there exists a
fat Hoffman graph $\mathfrak h$ such that
\begin{enumerate}[label=\textup{(\roman*)}]
 \item the slim graph of $\mathfrak h$ is $\G$;
 \item $\lmin(\mathfrak h)\ge-3$;
 \item every quasi-clique of $\mathfrak h$ is a clique.
\end{enumerate}
\end{theorem}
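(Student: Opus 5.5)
The plan is to build $\mathfrak h$ from the large cliques of $\G$ in the standard way, and to use sesqui-regularity only at the end to turn quasi-cliques into genuine cliques.

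\emph{Step 1: large cliques and the associated Hoffman graph.} Fix a large constant $M$. I would first invoke the general structure theory behind \cite[Lemma~5.1]{KoolenYangYang2018}. For a connected graph with $\lmin\ge-3$ and huge minimum valency, Ramsey-type arguments combined with interlacing (Lemma~\ref{lem:interlacing}) give two facts. First, each vertex lies in a bounded number (at most $3$) of ``big'' maximal cliques of order at least $M$. Second, each vertex has only boundedly many neighbors not covered by these cliques. Call two big cliques equivalent if they share all but a bounded number of vertices. For each equivalence class $\mathcal C$, form the set $Q(\mathcal C)$ of vertices adjacent to all but at most a fixed number $m$ of the vertices of some member of $\mathcal C$. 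Attach one fat vertex $f_{\mathcal C}$ to all of $Q(\mathcal C)$. The resulting Hoffman graph $\mathfrak h$ has slim graph $\G$, giving (i). It is fat because every vertex has large valency and hence lies in some big clique.

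\emph{Step 2: the eigenvalue bound (ii).} I would use Hoffman's limit principle, as in \cite{KoolenYangYang2018}. Replace each fat vertex $f_{\mathcal C}$ by a new clique $K_p$ joined completely to $Q(\mathcal C)$. For fixed $\mathfrak h$, the smallest eigenvalue of this graph tends to $\lmin(\mathfrak h)$ as $p\to\infty$. This finite graph is not an induced subgraph of $\G$, so one cannot interlace directly. Instead, one uses the large cliques already present in $\G$, which play the role of the $K_p$, and a finite-subgraph argument. Any finite induced Hoffman subgraph of $\mathfrak h$ with smallest eigenvalue $<-3$ would yield, after taking many vertices from each relevant big clique, an induced subgraph of $\G$ with $\lmin<-3$. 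That contradicts Lemma~\ref{lem:interlacing}, and Lemma~\ref{lem:hoffman-interlacing} then gives $\lmin(\mathfrak h)\ge-3$. The delicate point is the definition of $Q(\mathcal C)$ with the slack $m$. One must check that vertices almost adjacent to a clique behave, inside the limit, like genuine neighbors of the fat vertex. I regard this as the main technical obstacle, and I would take it essentially from \cite{KoolenYangYang2018,KoolenGebremichelYangYang2023}.

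\emph{Step 3: quasi-cliques are cliques (iii).} This is where $n-k-1>2$ and sesqui-regularity enter. By Theorem~\ref{thm:large-valency-alternative} with $\lambda=3$, for $k\ge\kappa_2(3)$ we get $c\le18$. Suppose $x,y\in Q(\mathcal C)$ with $x\not\sim y$. Each is adjacent to all but at most $m$ vertices of a big clique $C\in\mathcal C$ of order at least $M$, so they have at least $M-2m$ common neighbors. Hence $\dist(x,y)=2$, and $c\ge M-2m$. Choosing $M>2m+18$ gives a contradiction, so $Q(\mathcal C)$ is a clique. Finally, take $\kappa_5$ to be the maximum of $\kappa_2(3)$ and the valency threshold required for Steps 1–2 with this choice of $M$.
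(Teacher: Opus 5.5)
The paper does not prove this statement. It imports (i)--(ii) from \cite[Lemma~5.1]{KoolenYangYang2018}, which is the associated Hoffman graph of the Kim--Koolen--Yang structure theory, and (iii) from \cite[Theorem~4.1]{KoolenGebremichelYangYang2023}. Your outline follows the same route. Your Step~3 is a legitimate way to obtain (iii) from results already quoted in the paper: you use $n-k-1>2$ and Theorem~\ref{thm:large-valency-alternative} to get $c\le18$, and then observe that two nonadjacent vertices of one quasi-clique would be at distance two with far more than $18$ common neighbours. It also shows exactly where sesqui-regularity and $n-k-1>2$ enter. Several points, however, are incomplete or wrong as written.

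\emph{Step~3 uses one clique for two vertices.} You take a single clique $C\in\mathcal C$ for both $x$ and $y$, but they may be near-adjacent to different members $C_1,C_2$ of the class. Members of one class can even be disjoint. In the cocktail-party graph $\operatorname{CP}(N)$, which has smallest eigenvalue $-2$, the two complementary transversal $N$-cliques must lie in the same class. Otherwise every slim vertex would have two common fat neighbours, giving $\Sp(\mathfrak h)=-J-I-P$ and $\lmin(\mathfrak h)\le-2N-2$. The missing ingredient is that a vertex of $Q(\mathcal C)$ misses only boundedly many vertices of \emph{every} member of $\mathcal C$. This follows from $\lmin(\G)\ge-3$. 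A clique $K_{s+t}$ together with a vertex adjacent to exactly $s$ of its vertices has smallest eigenvalue tending to $-1-\sqrt t$ as $s\to\infty$. So if $y$ had $5$ non-neighbours in $C_1$, you would delete from $N(y)\cap C_2$ the boundedly many non-neighbours of those five vertices. This produces that configuration with $t=5$, and $-1-\sqrt5<-3$ gives the contradiction.

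\emph{Step~1 misdescribes the black box.} What is bounded by $3$ is the number of classes (fat neighbours) through a vertex, not the number of big maximal cliques; a vertex of $\operatorname{CP}(N)$ lies in $2^{N-1}$ of them. Also, ``sharing all but boundedly many vertices'' is not transitive. The relation used in the structure theory is that every vertex of one clique has at most $m$ non-neighbours in the other.

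\emph{Step~2 cannot be run on $\mathfrak h$ itself.} The slim vertices of $\mathfrak h$ already exhaust $V(\G)$, so there is no room for the substitute cliques. Nothing in your sketch reduces a violation of $\lmin(\mathfrak h)\ge-3$ to a small induced Hoffman subgraph. So (ii) genuinely has to be imported from the cited work, as the paper does.
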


Theorem~\ref{thm:cgeq9}
reduces the remaining part of the proof of Theorem~\ref{thm:main} to the range
$3\leq c\leq8$. The following standing assumption will be used throughout the  structural analysis. 

\begin{assumption}\label{ass:Gamma-h}
The graph $\G$ is a connected sesqui-regular graph with parameters
$(n,k,c)$ satisfying
\begin{equation}\label{eq:parameter-range}
 3\le c\le8,\qquad -3\le\lmin(\G)<-2,\qquad k\ge98.
\end{equation}
Moreover, $\mathfrak h$ is a fat Hoffman graph satisfying the conclusions
of Theorem~\ref{thm:hoffman-cover}; namely, its slim graph is $\G$,
$\lmin(\mathfrak h)\geq-3$, and every quasi-clique of $\mathfrak h$ is a
clique.
\end{assumption}

The following lemma gives an elementary bound on intersections of quasi-cliques.
\begin{lemma}[{\cite[Fact~1]{YangEtAl2021}}]\label{lem:quasi-bounds}
Under Assumption~\ref{ass:Gamma-h}, if $f$ and $f'$ are two distinct fat
vertices of $\mathfrak h$, then
$|\Nslim_{\mathfrak h}(f,f')|\le2$.
\end{lemma}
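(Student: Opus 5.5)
Suppose, to reach a contradiction, that $f\neq f'$ are fat vertices with $|\Nslim_{\mathfrak h}(f,f')|\ge3$; fix three distinct slim vertices $x_1,x_2,x_3$ adjacent to both $f$ and $f'$. By Assumption~\ref{ass:Gamma-h}, the quasi-cliques $Q_{\mathfrak h}(f)$ and $Q_{\mathfrak h}(f')$ are cliques, so $x_1,x_2,x_3$ are pairwise adjacent in $\G$. The plan is to work inside the small induced Hoffman subgraph $\mathfrak g$ with slim vertices $x_1,x_2,x_3$ and fat vertices $f,f'$. By Lemma~\ref{lem:hoffman-interlacing}, $\lmin(\mathfrak g)\ge\lmin(\mathfrak h)\ge-3$. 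The contradiction will come from computing $\lmin(\mathfrak g)$ exactly.

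In $\mathfrak g$, each $x_i$ has exactly the two fat neighbours $f,f'$, and every pair $x_i,x_j$ is adjacent with two common fat neighbours. By \eqref{eq:special-matrix}, the diagonal entries of $\Sp(\mathfrak g)$ are $-2$ and the off-diagonal entries are $1-2=-1$. Hence $\Sp(\mathfrak g)=-I-J$ with $J$ the $3\times3$ all-ones matrix, whose smallest eigenvalue is $-1-3=-4<-3$. This contradicts $\lmin(\mathfrak g)\ge-3$, so $|\Nslim_{\mathfrak h}(f,f')|\le2$.

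The only points needing care are that $\mathfrak g$ is a legitimate induced Hoffman subgraph and that its fat-neighbour counts are as claimed. Both fat vertices have slim neighbours in $\mathfrak g$, and since $\mathfrak g$ is induced, the fat neighbours of $x_i$ inside $\mathfrak g$ are exactly $f,f'$. So there is no real obstacle; the key step is using the clique property of quasi-cliques, which forces the $x_i$ to be mutually adjacent. Without it, nonadjacent pairs would give entries $-2$ and the bound would change. Note that neither sesqui-regularity nor the bounds on $c$ and $k$ are needed here.
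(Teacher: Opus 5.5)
Your argument is correct: the induced Hoffman subgraph on $x_1,x_2,x_3,f,f'$ is a valid Hoffman graph, its special matrix is $-I-J$ with smallest eigenvalue $-4$, and Lemma~\ref{lem:hoffman-interlacing} then gives the contradiction. The paper gives no proof here and only cites \cite[Fact~1]{YangEtAl2021}; your computation is the standard argument behind that fact.

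One correction to your closing remark: the clique property is not actually needed. A nonadjacent pair would give an off-diagonal entry $-2$ instead of $-1$, which only lowers the spectrum. In every case $\frac13\mathbf 1^{\mathsf T}\Sp(\mathfrak g)\mathbf 1\le\frac13(-6-6)=-4$, so the bound holds in any Hoffman graph with smallest eigenvalue at least $-3$.
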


\subsection{Structure of indecomposable factors with no integral reduced representation of norm 3}

To prove Theorem~\ref{thm:main}, it suffices to show that the Hoffman
graph $\mathfrak h$ has an integral representation of norm $3$. By
Theorem~\ref{thm:representation-equivalence}, this is equivalent to
show that $\mathfrak h$ has an integral reduced representation of norm
$3$. The remainder of this subsection analyzes an indecomposable factor for which such an integral reduced representation does not exist. 

\begin{assumption}\label{ass:exceptional-factor}
Under Assumption~\ref{ass:Gamma-h}, let \(\mathfrak g\) be an
indecomposable factor of \(\mathfrak h\) that admits no integral reduced
representation of norm \(3\), and let \(D\) be its special graph.
\end{assumption}

For each
\(u\in V(D)\), once uniqueness has been established below, let \(f_u\)
denote the unique fat neighbor of \(u\) in \(\mathfrak g\). For each fat
vertex \(f\) of \(\mathfrak g\), put
\[
 F_f:=\{u\in V(D):f_u=f\}
\]
and 
\[
  C(f):=V\bigl(Q_{\mathfrak h}(f)\bigr)
       =N^{\mathrm{slim}}_{\mathfrak h}(f).
\]

The following classical result of Cameron, Goethals, Seidel, and Shult will be used to bound the order of special graphs.

\begin{theorem}[{\cite{CameronGoethalsSeidelShult1976}}]\label{thm:large-minus-two-integrable}
A connected graph with smallest eigenvalue at least $-2$ and more than
$36$ vertices is $1$-integrable.
\end{theorem}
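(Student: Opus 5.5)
The plan is the classical root-lattice argument. Let $G$ be connected with $\lambda_{\min}(G)\ge-2$ and $m>36$ vertices. Then $A(G)+2I$ is positive semidefinite, so it is the Gram matrix of real vectors $\mathbf x$ ($x\in V(G)$) with $(\mathbf x,\mathbf x)=2$, $(\mathbf x,\mathbf y)=1$ for $x\sim y$, and $(\mathbf x,\mathbf y)=0$ otherwise. I want to show these vectors can be taken in some $\Z^m$ with the standard inner product, which is exactly $1$-integrability for $\lceil-\lambda_{\min}\rceil\le2$. (If $\lambda_{\min}(G)>-2$, one takes the same Gram matrix and notes that $\lceil-\lambda_{\min}\rceil\in\{1,2\}$; the degenerate small cases such as line graphs with $\lambda_{\min}\ge-1$, i.e. complete graphs, are handled directly.)

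\textbf{Step 1 (root lattice).} The $\Z$-span $L$ of the vectors $\mathbf x$ is an integral lattice: all inner products are integers. It is generated by vectors of norm $2$, so it is even. An even integral lattice generated by norm-$2$ vectors is a root lattice, that is, an orthogonal direct sum of lattices of types $A_n$, $D_n$, $E_6$, $E_7$, $E_8$.

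\textbf{Step 2 (irreducibility).} Since $G$ is connected, every two generators are joined by a chain of generators with nonzero inner products. Hence all generators lie in a single irreducible summand, and $L$ is irreducible.

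\textbf{Step 3 (the $A_n$ and $D_n$ cases).} If $L$ is of type $A_n$ or $D_n$, I use the standard embeddings. For $D_n$, the roots are $\pm e_i\pm e_j$ in $\Z^n$. For $A_n$, the roots are $e_i-e_j$ in $\Z^{n+1}$. In both cases every root, and in particular every $\mathbf x$, is an integral vector with the standard inner product. This gives the required integral representation.

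\textbf{Step 4 (excluding $E_6,E_7,E_8$).} If $L$ is of type $E_6$, $E_7$ or $E_8$, then $L\subseteq E_8$ up to isometry, so all the $\mathbf x$ live in $\mathbb R^8$. I then bound $m$ by an absolute-bound argument. Consider the symmetric matrices $\mathbf x\mathbf x^{\mathsf T}$. They lie in the space of $8\times8$ symmetric matrices, which has dimension $36$. Under the trace inner product,
\[
(\mathbf x\mathbf x^{\mathsf T},\mathbf y\mathbf y^{\mathsf T})=(\mathbf x,\mathbf y)^2\in\{4,1,0\}.
\]
The Gram matrix of these $m$ matrices is therefore $4I+A(G)$, because squaring a $0/1$ entry leaves it unchanged. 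Since $\lambda_{\min}(G)\ge-2$, this Gram matrix satisfies $4I+A(G)\succeq 2I$, so it is nonsingular. Hence the $m$ matrices are linearly independent, giving $m\le36$. This contradicts $m>36$, so $L$ is of type $A_n$ or $D_n$, and Step 3 finishes the proof.

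The main obstacle is Step 1: the structure theorem that an irreducible root lattice is of type $A$, $D$ or $E$, via Witt's classification of root systems. I would cite it rather than reprove it. Given this classification, the remaining steps, including the dimension count $36=\binom{9}{2}$ in Step 4, are short.
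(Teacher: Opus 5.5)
Your proof is correct, and it is essentially the classical Cameron--Goethals--Seidel--Shult argument that the paper cites without proving. Connectedness makes the root lattice irreducible, types $A_n$ and $D_n$ embed in $\Z^{n+1}$ or $\Z^n$, and the $E$-types are ruled out because the matrices $\mathbf x\mathbf x^{\mathsf T}$ have Gram matrix $A(G)+4I\succeq 2I$, so they are linearly independent in a space of dimension at most $36$. The one case you wave through, the complete graph with $\lambda_{\min}=-1$ where norm $1$ is required, is settled by $A+I=J=\mathbf 1\mathbf 1^{\mathsf T}$.
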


\begin{lemma}\label{lem:special-graph-of-g}
Under Assumptions~\ref{ass:Gamma-h}
and~\ref{ass:exceptional-factor}, the following hold.
\begin{enumerate}
\item Every slim vertex of
      \(\mathfrak g\) has exactly one fat neighbor.
\item The special graph \(D\) of \(\mathfrak g\) has the following properties. 
\begin{enumerate}
    \item It is connected;
     \item It has no negative edges and hence may be regarded as an ordinary graph;
    \item  
     \begin{equation}
        A(D)=\Sp(\mathfrak g)+I\qquad\text{and}\qquad \lambda_{\min}(D)\geq -2,
        \label{eq:Sp-D}
      \end{equation}
      where $A(D)$ is the adjacency matrix of $D$;
    \item It is not $1$-integrable and
      \begin{equation}
        |V(D)|\leq36;
        \label{eq:D-size}
      \end{equation}
    \item The nonempty sets \(F_f\) form a partition of \(V(D)\), and each of them is independent in \(D\).
\end{enumerate} 
\end{enumerate}
\end{lemma}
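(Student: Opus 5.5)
Since \(\mathfrak h\) is fat, so is its factor \(\mathfrak g\): by Lemma~\ref{lem:sum-criterion}, every fat neighbor in \(\mathfrak h\) of a slim vertex of \(\mathfrak g\) lies in \(\mathfrak g\). Moreover \(\lmin(\mathfrak g)\ge\lmin(\mathfrak h)\ge-3\), because \(\Sp(\mathfrak h)\) is block diagonal with \(\Sp(\mathfrak g)\) as a block (alternatively, use Lemma~\ref{lem:hoffman-interlacing}). For part~1, every slim vertex of \(\mathfrak g\) has at least one fat neighbor since \(\mathfrak g\) is fat. If some slim vertex had two or more, then Theorem~\ref{thm:two-fat-integral} would give an integral reduced representation of norm \(3\) for \(\mathfrak g\), contradicting Assumption~\ref{ass:exceptional-factor}. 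Hence each slim vertex \(u\) has exactly one fat neighbor, and \(f_u\) is well defined.

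For part~2, property (a) is the stated equivalence between indecomposability and connectedness of the special graph. For (b), take distinct slim vertices \(x,y\) of \(\mathfrak g\). Then \(|\Nfat_{\mathfrak g}(x,y)|\le1\), and by \eqref{eq:special-matrix} the off-diagonal entry of \(\Sp(\mathfrak g)\) is \(1-|\Nfat(x,y)|\in\{0,1\}\) if \(x\sim y\) and \(-|\Nfat(x,y)|\) if \(x\not\sim y\).

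A negative edge would require two nonadjacent slim vertices with a common fat neighbor \(f\). Both would lie in the quasi-clique \(Q_{\mathfrak h}(f)\), which is a clique by Assumption~\ref{ass:Gamma-h}. This is a contradiction, since fat neighbors in \(\mathfrak h\) and \(\mathfrak g\) coincide by Lemma~\ref{lem:sum-criterion}. So all off-diagonal entries lie in \(\{0,1\}\). The diagonal entries are all \(-1\), so \(\Sp(\mathfrak g)+I\) is a symmetric \(0/1\) matrix with zero diagonal, and it is exactly \(A(D)\). Then \(\lmin(D)=\lmin(\mathfrak g)+1\ge-2\), which proves (c).

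For (e), the sets \(F_f\) partition \(V(D)\) because each vertex has a unique fat neighbor. Two vertices \(u,v\in F_f\) are adjacent in \(\Gamma\), since \(C(f)\) is a clique, and they share the fat neighbor \(f\). Their entry is therefore \(1-1=0\), so \(F_f\) is independent in \(D\).

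The main point is (d): showing that \(D\) is not \(1\)-integrable. Suppose instead that there are integral vectors \(\mathbf u\) with \((\mathbf u,\mathbf u)=2\), \((\mathbf u,\mathbf v)=1\) for \(u\sim_D v\), and \(0\) otherwise. (This is the \(s=1\) case of \eqref{def:s-integrable} when \(\lceil-\lmin(D)\rceil=2\). If \(\lmin(D)>-2\), a root representation of norm \(2\) still exists whenever \(D\) is \(1\)-integrable with norm \(\lceil-\lmin\rceil\le2\); I would handle this by noting that integrability with norm \(1\) forces \(D\) to be complete, and then embedding into norm \(2\).)

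Set \(\boldsymbol\psi(u)=\mathbf u\). Then
\[
(\boldsymbol\psi(u),\boldsymbol\psi(u))=2=3-1,
\]
and for distinct \(u,v\) we get \((\boldsymbol\psi(u),\boldsymbol\psi(v))=A(D)_{uv}=\Sp(\mathfrak g)_{uv}\). This is precisely an integral reduced representation of norm \(3\) of \(\mathfrak g\), contradicting Assumption~\ref{ass:exceptional-factor}. Hence \(D\) is not \(1\)-integrable. Since \(D\) is connected with \(\lmin(D)\ge-2\), Theorem~\ref{thm:large-minus-two-integrable} then forces \(|V(D)|\le36\).

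The delicate spot is the definitional mismatch when \(\lmin(D)>-2\). There, \(1\)-integrability in the sense of \eqref{def:s-integrable} is phrased with norm \(\lceil-\lmin(D)\rceil\), which can be \(1\) only for complete graphs. I would check this small case directly: such vectors are standard basis vectors \(\mathbf e_1\) up to sign, and taking \(\mathbf e_0+\mathbf u\) gives norm-\(2\) vectors with the same inner products. In every case one obtains a norm-\(2\) integral representation, and the argument above applies.
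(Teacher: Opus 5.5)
Your proposal follows the paper's proof essentially step for step. It gets fatness of $\mathfrak g$ from Lemma~\ref{lem:sum-criterion}, uniqueness of the fat neighbor from Theorem~\ref{thm:two-fat-integral}, and excludes negative edges because quasi-cliques are cliques. It then uses $\Sp(\mathfrak g)+3I=A(D)+2I$ to turn a norm-$2$ representation of $D$ into a reduced representation of norm $3$ of $\mathfrak g$, and gets $|V(D)|\le 36$ from Theorem~\ref{thm:large-minus-two-integrable}.

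The one place where your argument fails as written is the degenerate case of (d), where $\lmin(D)\ge -1$.

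\emph{The case $D=K_n$ with $n\ge 2$.} Here a norm-$1$ integral representation forces all vectors to coincide, since adjacent vertices need inner product $1$ and Cauchy--Schwarz is then an equality. Your vectors $\mathbf e_0+\mathbf u$ are therefore all equal, with pairwise inner product $2$ rather than $1$. So they do not represent $A(D)+2I$.

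\emph{The case $D=K_1$.} Your remark that the norm ``can be $1$ only for complete graphs'' misses this graph. Here the required norm is $\lceil 0\rceil=0$, so $\mathbf u=\mathbf 0$ and $\mathbf e_0+\mathbf u$ has norm $1$, not $2$.

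The repair is the paper's. In these cases, ignore the given representation entirely. Assign $\mathbf e_0+\mathbf e_i$ to the $i$th vertex of $K_n$, and assign $\mathbf e_1+\mathbf e_2$ to the single vertex of $K_1$. This gives $A(D)+2I=N^{\mathsf T}N$ with no integrability hypothesis needed, and the rest of your argument then goes through unchanged.
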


\begin{proof}
Throughout the proof, Assumptions~\ref{ass:Gamma-h}
and~\ref{ass:exceptional-factor} are in force.

{\rm (i)} Since \(\mathfrak g\) is a factor of the fat Hoffman graph
\(\mathfrak h\), Lemma~\ref{lem:sum-criterion} implies that every fat
neighbor in \(\mathfrak h\) of a slim vertex of \(\mathfrak g\) also belongs
to \(\mathfrak g\). Thus \(\mathfrak g\) is fat. By Lemma~\ref{lem:hoffman-interlacing},
\[
  \lambda_{\min}(\mathfrak g)
  \geq \lambda_{\min}(\mathfrak h)
  \geq -3.
\]
If some slim vertex of \(\mathfrak g\) had at least two fat neighbors,
Theorem~\ref{thm:two-fat-integral} would give an integral reduced
representation of norm \(3\) for \(\mathfrak g\), contrary to
Assumption~\ref{ass:exceptional-factor}. Since \(\mathfrak g\) is fat, every
slim vertex
therefore has exactly one fat neighbor.

{\rm (ii) ($a$)} The indecomposability of \(\mathfrak g\) is equivalent to
connectedness of its special graph, so \(D\) is connected.

{\rm ($b$)}  Let \(x\) and \(y\) be distinct slim vertices of \(\mathfrak g\). A
negative special edge could occur only if \(x\) and \(y\) were nonadjacent
in the slim graph and shared a fat neighbor. In that case both slim vertices
would lie in the same quasi-clique of \(\mathfrak h\), which is a clique by
Assumption~\ref{ass:Gamma-h}, a contradiction. Thus the special graph has
no negative edges.

{\rm ($c$)} Since every slim vertex has
exactly one fat neighbor, each diagonal entry of
\(\operatorname{Sp}(\mathfrak g)\) equals \(-1\), while an off-diagonal
entry equals \(1\) precisely on an edge of \(D\). This proves
$A(D)=\Sp(\mathfrak g)+I$, and it follows immediately that
\[
  \lambda_{\min}(D)
  =\lambda_{\min}(\mathfrak g)+1
  \geq -2.
\]

{\rm ($d$)} Suppose, to the contrary, that $D$ is $1$-integrable. If
$D\cong K_1$, then $\mathfrak{g}$ has a single slim vertex with one fat neighbor. Assigning the integral vector $\mathbf{e}_1+\mathbf{e}_2$ to the slim vertex gives $\mathfrak{g}$ an integral reduced representation of norm $3$, a
contradiction. Thus $|V(D)|\geq2$. 

If $\lambda_{\min}(D)=-1$, then $D$ is a clique, because every connected
graph with smallest eigenvalue at least $-1$ is complete. Assigning
$\mathbf e_0+\mathbf e_i$ to the $i$th vertex of $D$ gives an integral matrix
$N$ such that
\begin{equation}\label{eq:non-1-integrable}
 A(D)+2I=N^{\mathsf T}N.
\end{equation}
If $-2\leq\lambda_{\min}(D)<-1$, the assumed $1$-integrability of $D$
again gives an integral matrix $N$ satisfying
\eqref{eq:non-1-integrable}. Since
$A(D)+2I=\operatorname{Sp}(\mathfrak g)+3I$, the columns of $N$ give an
integral reduced representation of norm $3$ for $\mathfrak g$, a
contradiction. Hence $D$ is not $1$-integrable. Since $D$ is connected
and has smallest eigenvalue at least $-2$,
Theorem~\ref{thm:large-minus-two-integrable} now implies
\eqref{eq:D-size}.

{\rm ($e$)} The sets \(F_f\) clearly partition \(V(D)\). For two distinct vertices
\(x,y\in F_f\), they lie in the clique \(Q_{\mathfrak h}(f)\), and
hence they are adjacent in the slim graph. Their corresponding special
matrix entry is nevertheless \(1-1=0\), because they share the fat vertex
\(f\). Thus \(\{x,y\}\notin E(D)\), and every set \(F_f\) is independent.
\end{proof}

\begin{lemma}
\label{lem:large-quasi-cliques}
Under Assumptions~\ref{ass:Gamma-h}
and~\ref{ass:exceptional-factor}, for every $u\in V(D)$,
\begin{equation}
 N_{\Gamma}(u)
 =\bigl(C(f_u)\setminus\{u\}\bigr)\,\dot\cup\,N_D(u),
 \qquad
 k=|C(f_u)|-1+d_D(u).
 \label{eq:ambient-valency}
\end{equation}
In particular,
\begin{equation}\label{eq:clique order}
 |C(f_u)|\geq k-34.
\end{equation}
\end{lemma}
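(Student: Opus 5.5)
Fix $u\in V(D)$; by Lemma~\ref{lem:special-graph-of-g}(i), $f_u$ is its unique fat neighbor in $\mathfrak g$, and by Lemma~\ref{lem:sum-criterion} it is also its unique fat neighbor in $\mathfrak h$. The plan is to split $N_\Gamma(u)$ according to whether a neighbor $v$ shares the fat vertex $f_u$ with $u$.

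\emph{First part.} If $v\in C(f_u)\setminus\{u\}$, then $v$ is adjacent to $u$, because $Q_{\mathfrak h}(f_u)$ is a clique by Assumption~\ref{ass:Gamma-h}. Conversely, let $v\sim_\Gamma u$ with $f_u\notin\Nfat_{\mathfrak h}(v)$. Since $u$ has no other fat neighbor, $|\Nfat_{\mathfrak h}(u,v)|=0$, so $\Sp(\mathfrak h)_{uv}=1$. Now either $v$ lies in $\mathfrak g$ or it does not.
\begin{itemize}
\item If $v$ lies in $\mathfrak g$, then $uv$ is a positive special edge, so $v\in N_D(u)$ by Lemma~\ref{lem:special-graph-of-g}(ii).
\item If $v$ lies in another factor, then Lemma~\ref{lem:sum-criterion} forces $u$ and $v$ to have exactly one common fat neighbor. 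This contradicts $|\Nfat_{\mathfrak h}(u,v)|=0$.
\end{itemize}
Conversely, every $w\in N_D(u)$ satisfies $\Sp(\mathfrak g)_{uw}=1$. This means $u\sim_\Gamma w$ with no common fat neighbor, so $w\notin C(f_u)$. Together these give the disjoint decomposition $N_\Gamma(u)=\bigl(C(f_u)\setminus\{u\}\bigr)\,\dot\cup\,N_D(u)$, and counting, using $k$-regularity, gives $k=|C(f_u)|-1+d_D(u)$.

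\emph{Second part.} For the bound \eqref{eq:clique order} we need $d_D(u)\le 35$. The plan is as follows.
\begin{itemize}
\item Lemma~\ref{lem:special-graph-of-g}(ii)(d) gives $|V(D)|\le 36$, hence $d_D(u)\le 35$ and $|C(f_u)|\ge k-34$.
\item We can in fact do slightly better. The set $F_{f_u}$ is independent in $D$ and contains $u$, and $u$ is not adjacent to itself, so $d_D(u)\le |V(D)|-1\le 35$. This already suffices.
\end{itemize}

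The only delicate step is the case where $v$ lies in a different factor. It must use the precise statement of Lemma~\ref{lem:sum-criterion}, which says that adjacency across factors forces exactly one common fat neighbor. That common fat neighbor would have to be $f_u$, which puts $v$ in $C(f_u)$. The rest is bookkeeping.
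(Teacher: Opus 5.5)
Your proposal is correct and follows essentially the same argument as the paper: neighbors of $u$ sharing $f_u$ lie in $C(f_u)$, Lemma~\ref{lem:sum-criterion} forces neighbors in other factors into $C(f_u)$, the remaining neighbors are exactly $N_D(u)$, and $d_D(u)\le |V(D)|-1\le 35$ gives \eqref{eq:clique order}. Your bullet claiming to ``do slightly better'' only reproduces the same bound $d_D(u)\le|V(D)|-1$ (independence of $F_{f_u}$ would give $d_D(u)\le |V(D)|-|F_{f_u}|$), but nothing stronger is needed.
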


\begin{proof}
Let $v\sim_{\Gamma}u$. If
$v\notin V_{\mathrm{slim}}(\mathfrak g)$, then $u$ and $v$ belong to
different factors of $\mathfrak h$. Lemma~\ref{lem:sum-criterion}
therefore implies that their unique common fat neighbor is $f_u$, so
$v\in C(f_u)$. If $v\in V_{\mathrm{slim}}(\mathfrak g)$, then either
$f_v=f_u$, in which case $v\in C(f_u)$, or $f_v\neq f_u$, in which case
$v\in N_D(u)$.

Conversely, every vertex of $C(f_u)\setminus\{u\}$ is adjacent to $u$
because $C(f_u)$ is a clique, and every vertex of $N_D(u)$ is adjacent
to $u$ in $\Gamma$. The two sets are disjoint: the vertices of $C(f_u)\cap\Vslim(\mathfrak{g})$ belong to $F_{f_u}$, whereas $N_D(u)$ is disjoint from $F_{f_u}$. This proves \eqref{eq:ambient-valency}. Since
$d_D(u)\leq |V(D)|-1\leq35$, \eqref{eq:clique order} follows.
\end{proof}

\begin{lemma}
\label{lem:ordered-edge-sets}
Under Assumptions~\ref{ass:Gamma-h}
and~\ref{ass:exceptional-factor}, let $x$ and $y$ be two adjacent vertices in $D$, and put
\[
 C_x:=C(f_x),\qquad C_y:=C(f_y),\qquad \mathcal F_{x,y}:=\{f_u:u\in N_D(x),\ f_u\neq f_y\},
\]
and
\begin{equation}
 Z_{x,y}:=C_y\setminus
 \left(
 V_{\mathrm{slim}}(\mathfrak g)
 \cup
 \bigcup_{f\in\mathcal F_{x,y}\cup\{f_x\}}C(f)
 \right).
 \label{eq:Zxy-set}
\end{equation}
Then
\begin{equation}
 C_x\cap C_y=\varnothing,
 \qquad |\mathcal F_{x,y}|\leq34,
 \label{eq:ordered-edge-set-bounds}
\end{equation}
\begin{equation}
 Z_{x,y}=C_y\setminus V_{\mathrm{slim}}(\mathfrak g)
          =C_y\setminus F_{f_y}, \label{eq:ordered-edge-set-bounds2}
\end{equation}
and 
\begin{equation}
 |Z_{x,y}|=k+1-d_D(y)-|F_{f_y}|\geq k-35.
 \label{eq:ordered-edge-set-bounds3}
\end{equation}
For every $z\in Z_{x,y}$, the vertices $x$ and $z$ are at distance two
in $\Gamma$. Put
\[
 W_x:=N_{\Gamma}(z)\cap C_x,
 \qquad w_x:=|W_x|.
\]
Then
\begin{equation}
 N_{\Gamma}(x,z)
 =W_x\,\dot\cup\,\bigl(N_D(x)\cap F_{f_y}\bigr),
 \label{eq:common-neighbour-decomposition}
\end{equation}
and consequently
\begin{equation}
 |N_D(x)\cap F_{f_y}|=c-w_x.
 \label{eq:internal-common-count}
\end{equation}
\end{lemma}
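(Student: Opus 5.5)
We verify the claims in the order stated, using Lemma~\ref{lem:sum-criterion}, Lemma~\ref{lem:special-graph-of-g} and Lemma~\ref{lem:large-quasi-cliques}.

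\textbf{Disjointness and the bound on $\mathcal F_{x,y}$.} Since $x\sim_D y$, the special entry $1-|\Nfat(x,y)|$ equals $1$, so $f_x\neq f_y$. If some $v$ lay in $C_x\cap C_y$, then $v,x,y$ would all share fat vertices. The vertex $v$ is adjacent to both $f_x$ and $f_y$. If $v\in\Vslim(\mathfrak g)$, this contradicts Lemma~\ref{lem:special-graph-of-g}(i). If $v$ lies in another factor, $v$ would share two fat vertices with vertices of $\mathfrak g$, contradicting the last sentence of Lemma~\ref{lem:sum-criterion}, which says every fat neighbour of $v$ lies in $v$'s own factor. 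For the second bound, $f_y=f_u$ for the neighbour $u=y$, so $|\mathcal F_{x,y}|\le d_D(x)-1\le 34$ by \eqref{eq:D-size}.

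\textbf{Simplifying $Z_{x,y}$ and counting it.} Vertices of $C_y$ outside $\Vslim(\mathfrak g)$ have $f_y$ as a fat neighbour. By Lemma~\ref{lem:sum-criterion}, $f_y$ then belongs to their factor, while it also belongs to $\mathfrak g$, which is impossible. So $C_y\subseteq\Vslim(\mathfrak g)$, $C_y=F_{f_y}$, and $Z_{x,y}$ would be empty. This contradicts the intended bound, so I have misread the setup. Fat vertices may be shared across factors only in the sense that a vertex $z$ of another factor is adjacent to $f_y$ but $f_y$ is not counted in $\mathfrak g$'s fat set. Following the paper's usage in Lemma~\ref{lem:large-quasi-cliques}, $C(f)$ contains vertices outside $\mathfrak g$. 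I therefore accept that $C(f)\setminus\Vslim(\mathfrak g)$ consists of slim vertices of other factors, each having $f$ as its unique common fat neighbour with members of $F_f$.

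Under this reading, a vertex $z\in C_y\setminus\Vslim(\mathfrak g)$ cannot lie in $C(f)$ for any $f\in\mathcal F_{x,y}\cup\{f_x\}$. Such an $f$ is a fat vertex of $\mathfrak g$ different from $f_y$, so $z$ would be adjacent to two fat vertices $f,f_y$. Then $z$ shares two common fat neighbours with $y$ and with a vertex of $F_f$, contradicting Lemma~\ref{lem:sum-criterion}. Also $C_y\cap\Vslim(\mathfrak g)=F_{f_y}$ by Lemma~\ref{lem:special-graph-of-g}(i), which gives \eqref{eq:ordered-edge-set-bounds2}. Then \eqref{eq:ordered-edge-set-bounds3} follows from $|C_y|=k+1-d_D(y)$ (Lemma~\ref{lem:large-quasi-cliques}), together with $d_D(y)+|F_{f_y}|\le|V(D)|\le36$. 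The latter holds because $N_D(y)\cap F_{f_y}=\varnothing$.

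\textbf{Distance two and the common-neighbour decomposition.} Take $z\in Z_{x,y}$. Then $z\sim y\sim x$, and $z\ne x$. By \eqref{eq:ambient-valency}, $N_\Gamma(x)=(C_x\setminus\{x\})\cup N_D(x)$. The vertex $z$ is not in $C_x$ by the choice of $Z_{x,y}$, and not in $N_D(x)\subseteq\Vslim(\mathfrak g)$. Hence $\dist_\Gamma(x,z)=2$.

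Common neighbours of $x$ and $z$ are therefore $W_x$ together with $N_D(x)\cap N_\Gamma(z)$. For $u\in N_D(x)$, apply \eqref{eq:ambient-valency} at $u$: we have $z\sim_\Gamma u$ iff $z\in C(f_u)$, since $z\notin V(D)$. If $f_u\ne f_y$, then $f_u\in\mathcal F_{x,y}$ and $z\notin C(f_u)$. If $f_u=f_y$, then $z\in C_y$. So $N_D(x)\cap N_\Gamma(z)=N_D(x)\cap F_{f_y}$. This set is disjoint from $W_x\subseteq C_x$ because $C_x\cap C_y=\varnothing$. Sesqui-regularity gives $c=w_x+|N_D(x)\cap F_{f_y}|$.

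The main obstacle is the correct bookkeeping of which fat vertices belong to which factor. Everything else is direct.
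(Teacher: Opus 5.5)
There is a genuine gap, located exactly at the bookkeeping step you flagged. You never validly prove that $C(f)\cap C(f')=\varnothing$ for distinct fat vertices $f,f'$ of $\mathfrak g$. Both $C_x\cap C_y=\varnothing$ and the first equality in \eqref{eq:ordered-edge-set-bounds2} rest on this.

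Your first argument has a vertex $v$ of another factor adjacent to $f_x$ and $f_y$, and claims this contradicts the last sentence of Lemma~\ref{lem:sum-criterion}. That sentence only says $f_x$ and $f_y$ also lie in $v$'s factor. Fat vertices may be shared between factors, as you yourself conclude one paragraph later, but you never go back to repair this step.

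Your second attempt also fails. By Lemma~\ref{lem:special-graph-of-g}(i), $y$ has the single fat neighbour $f_y$, and each $u\in F_f$ has the single fat neighbour $f$. So a vertex $z\notin\Vslim(\mathfrak g)$ adjacent to both $f$ and $f_y$ shares exactly one fat neighbour with $y$ and exactly one with $u$. That is precisely what Lemma~\ref{lem:sum-criterion} permits for adjacent vertices in different factors, and the corresponding special-matrix entries are $1-1=0$. The sum structure alone does not exclude such a vertex.

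The missing idea is the quantitative argument the paper uses. Every fat vertex of $\mathfrak g$ has a slim neighbour in $\mathfrak g$, so $F_f$ and $F_{f'}$ are nonempty. Lemma~\ref{lem:large-quasi-cliques} then gives $|C(f)|,|C(f')|\geq k-34$, and Lemma~\ref{lem:quasi-bounds} gives $|C(f)\cap C(f')|\leq 2$. Both sets are cliques, so a common vertex would have valency at least $|C(f)\cup C(f')|-1\geq 2k-71>k$, a contradiction.

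Without this step, $C_x\cap C_y=\varnothing$, the identity $Z_{x,y}=C_y\setminus\Vslim(\mathfrak g)$, the exact value of $|Z_{x,y}|$, and the bound $|Z_{x,y}|\geq k-35$ are all unproved. These are exactly what Lemma~\ref{lem:retention} needs. Once the disjointness is in place, your derivations of \eqref{eq:ordered-edge-set-bounds2} and \eqref{eq:ordered-edge-set-bounds3} go through as written.

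The rest of your proposal is correct: the bound on $|\mathcal F_{x,y}|$, the distance-two claim, and \eqref{eq:common-neighbour-decomposition}. These only use the exclusions built into the definition \eqref{eq:Zxy-set}, which is slightly more direct than the paper's appeal to disjointness at those points. However, the disjointness of $W_x$ from $N_D(x)\cap F_{f_y}$ should be justified by $C_x\cap\Vslim(\mathfrak g)=F_{f_x}$, not by $C_x\cap C_y=\varnothing$ as you wrote.
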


\begin{proof}
The independence of the sets $F_f$ gives $f_x\neq f_y$. More generally,
if $f$ and $f'$ are distinct fat vertices of $\mathfrak g$, then
\begin{equation}
 C(f)\cap C(f')=\varnothing.
 \label{eq:disjoint-quasi-cliques}
\end{equation}
Indeed, the sets $F_f$ and $F_{f'}$ are nonempty because every fat
vertex of $\mathfrak g$ has a slim neighbor. Lemma~\ref{lem:quasi-bounds}
gives $|C(f)\cap C(f')|\leq2$, while
Lemma~\ref{lem:large-quasi-cliques}, applied to vertices of $F_f$ and
$F_{f'}$, gives $|C(f)|,|C(f')|\geq k-34$. If the intersection were nonempty, each
vertex in it would have valency at least
\[
 |C(f)\cup C(f')|-1\geq2(k-34)-2-1=2k-71>k,
\]
a contradiction. In particular, $C_x\cap C_y=\varnothing$.

Because $y\in N_D(x)\cap F_{f_y}$ and $|V(D)|\leq36$,
\[
 |\mathcal F_{x,y}|\leq |N_D(x)|-1\leq34.
\]
Every fat vertex in $\mathcal F_{x,y}\cup\{f_x\}$ is distinct from
$f_y$, so \eqref{eq:disjoint-quasi-cliques} implies \eqref{eq:ordered-edge-set-bounds2}. By \eqref{eq:ambient-valency} and the independence of $F_{f_y}$,
\[
 |Z_{x,y}|
 =k+1-d_D(y)-|F_{f_y}|
 \geq k+1-|V(D)|
 \geq k-35.
\]
This proves \eqref{eq:ordered-edge-set-bounds3}.

Let $z\in Z_{x,y}$. Since $y,z\in C_y$, the vertices $y$ and $z$ are
adjacent in $\Gamma$. Moreover, $z\notin V_{\mathrm{slim}}(\mathfrak g)$ by \eqref{eq:ordered-edge-set-bounds2}.
If $x\sim_{\Gamma}z$, then Lemma~\ref{lem:sum-criterion} would imply
$z\in C_x$, contrary to $C_x\cap C_y=\varnothing$. Hence
$\operatorname{dist}_{\Gamma}(x,z)=2$.

First,
\begin{equation}
 N_{\Gamma}(x,z)\setminus V_{\mathrm{slim}}(\mathfrak g)=W_x.
 \label{eq:outside-g-neighbours}
\end{equation}
Indeed, every vertex of $W_x$ is adjacent to $x$ because $C_x$ is a
clique. No vertex of $W_x$ belongs to $V_{\mathrm{slim}}(\mathfrak g)$:
otherwise its unique fat neighbor would be $f_x$, and its adjacency to
$z$ across two factors would imply $z\in C_x$. Conversely, if
$v\in N_{\Gamma}(x,z)\setminus V_{\mathrm{slim}}(\mathfrak g)$, then
the adjacency of $x$ and $v$ across two factors implies that their
common fat neighbor is $f_x$. Hence $v\in C_x$, and therefore
$v\in W_x$. This proves \eqref{eq:outside-g-neighbours}.

Next,
\begin{equation}
 N_{\Gamma}(x,z)\cap V_{\mathrm{slim}}(\mathfrak g)
 =N_D(x)\cap F_{f_y}.
 \label{eq:inside-g-neighbours}
\end{equation}
Let $u$ belong to the left-hand side. If $f_u=f_x$, then the adjacency
of $u$ and $z$ across two factors implies $z\in C_x$, a contradiction.
Thus $f_u\neq f_x$. Since $u\sim_{\Gamma}x$ and $u,x$ have distinct
fat neighbors, $u\in N_D(x)$. If $f_u\neq f_y$, then
\eqref{eq:disjoint-quasi-cliques} gives $C(f_u)\cap C_y=\varnothing$,
whereas the adjacency of $u$ and $z$ across two factors implies
$z\in C(f_u)$ and then $z\in C(f_u)\cap C_y$. Thus $f_u=f_y$ which proves the inclusion from left to right.

Conversely, let $u\in N_D(x)\cap F_{f_y}$. Then $u\sim_{\Gamma}x$.
Moreover, $u,z\in C_y$, so $u\sim_{\Gamma}z$. This proves
\eqref{eq:inside-g-neighbours}. Combining
\eqref{eq:outside-g-neighbours} and \eqref{eq:inside-g-neighbours}
gives \eqref{eq:common-neighbour-decomposition};
\eqref{eq:internal-common-count} follows from sesqui-regularity.
\end{proof}

\begin{lemma}\label{lem:retention}
Under Assumptions~\ref{ass:Gamma-h}
and~\ref{ass:exceptional-factor}, for any two vertices $x$ and $y$ adjacent in $D$, 
\begin{equation}
 |N_D(x)\cap F_{f_y}|\geq
 \begin{cases}
  2,&c\in\{3,4\},\\
  3,&c\in\{5,6,7,8\}.
 \end{cases}
 \label{eq:neighbor-expansion}
\end{equation}
\end{lemma}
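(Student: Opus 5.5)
By \eqref{eq:internal-common-count}, for every $z\in Z_{x,y}$ we have $|N_D(x)\cap F_{f_y}|=c-w_x(z)$, where $w_x(z)=|N_\Gamma(z)\cap C_x|$. The left-hand side does not depend on $z$. It therefore suffices to find \emph{one} vertex $z\in Z_{x,y}$ with $w_x(z)$ small: $w_x(z)\le c-2$ when $c\in\{3,4\}$, and $w_x(z)\le c-3$ when $c\ge5$. For $c=3$ this means $w_x(z)\le1$, and this is the binding case. So the plan is to show that some $z\in Z_{x,y}$ has at most one neighbor in $C_x$. I would get this by averaging over the roughly $k-35$ vertices of $Z_{x,y}$, using \eqref{eq:ordered-edge-set-bounds3}.

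\textbf{Step 1: structure of an edge $wz$ with $w\in C_x$, $z\in Z_{x,y}$.} By \eqref{eq:outside-g-neighbours}, $w\notin V_{\mathrm{slim}}(\mathfrak g)$, and likewise $z\notin V_{\mathrm{slim}}(\mathfrak g)$. If $w$ and $z$ lie in different factors of $\mathfrak h$, then Lemma~\ref{lem:sum-criterion} gives a unique common fat neighbor $f$. We have $f\neq f_x$ since $z\notin C_x$, and $f\ne f_y$ since $w\notin C_y$ by \eqref{eq:disjoint-quasi-cliques}. Hence $w$ and $z$ both lie in a third quasi-clique $C(f)$. If $w$ and $z$ lie in the same factor, they are joined by a special edge.

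In the first case I would bound $|C(f)|$ by the valency argument of Lemma~\ref{lem:ordered-edge-sets}. Since $C(f)$ is a clique through $w$, and $w$ already has at least $|C_x|-1\ge k-35$ neighbors in $C_x$, we get $|C(f)\setminus C_x|\le 35$. Lemma~\ref{lem:quasi-bounds} gives $|C(f)\cap C_x|\le2$ and $|C(f)\cap C_y|\le2$. So each such third clique contributes at most $2\cdot2=4$ edges between $C_x$ and $C_y$. Moreover, each vertex $w\in C_x$ has only at most $35$ neighbors outside $C_x$, spread over boundedly many such cliques or factor-mates.

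\textbf{Step 2: bound the number of ``heavy'' vertices of $Z_{x,y}$.} The aim is to show that $e(C_x,Z_{x,y})$ is at most about $|C_x|$ plus a constant. Equivalently, all but $O(1)$ vertices $z\in Z_{x,y}$ satisfy $w_x(z)\le1$. Since $|Z_{x,y}|\ge k-35\ge 63$, the averaging would then produce the desired $z$.

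To rule out many $z$ with $w_x(z)\ge2$, I would use the smallest eigenvalue through Lemma~\ref{lem:interlacing}. Suppose a vertex $z$ is adjacent to $w_1,w_2\in C_x$ and to essentially all of $C_y$, but is non-adjacent to $x$ and to most of $C_x$. Then the induced subgraph on $\{z,w_1,w_2,x\}$ together with large portions of $C_x$ and $C_y$ is a bounded configuration of two large cliques joined by a sparse bipartite pattern. I would compute the quotient matrix of such configurations, with parts of size about $k$, and show that its smallest eigenvalue drops below $-3$ once two vertices of $C_y$ both have two neighbors in $C_x$ in a suitably crossing way. As a fallback, I would work in $\mathfrak h$ and apply Lemma~\ref{lem:hoffman-interlacing} to a small Hoffman subgraph containing $f_x$, $f_y$, $f$ and the relevant slim vertices. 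This avoids large cliques, since the special matrix stays small.

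\textbf{Main obstacle.} The hard part is Step 2 for $c=3$: getting $w_x(z)\le1$ rather than merely $w_x(z)\le2$. The Lemma~\ref{lem:quasi-bounds}-type bound naturally yields pairs, not single edges. I expect to need the finite forbidden-subgraph check (eigenvalue $<-3$) for two vertices of $C_y$ each sending two edges into $C_x$. It may also be necessary to treat the case where $w,z$ lie in a common factor other than $\mathfrak g$ separately. In that case the special edge $wz$ can be handled via Theorem~\ref{thm:two-fat-integral} or the same interlacing argument.
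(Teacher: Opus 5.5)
There is a genuine gap. Step~2 carries the entire argument, and as planned it cannot be completed.

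First, there is no averaging slack, and the target is circular. For every $z\in Z_{x,y}$ you have $w_x(z)=c-m$, where $m:=|N_D(x)\cap F_{f_y}|$. These neighbors lie in $Z_{y,x}=C_x\setminus F_{f_x}$ by \eqref{eq:outside-g-neighbours}. Counting the edges between $Z_{x,y}$ and $Z_{y,x}$ from both sides gives
\[
 e(C_x,Z_{x,y})=(c-m)|Z_{x,y}|=(c-m')|Z_{y,x}|,\qquad m':=|N_D(y)\cap F_{f_x}|.
\]
Hence your target $e(C_x,Z_{x,y})\le|C_x|+O(1)$ is equivalent, for large $k$, to $m'\ge c-1$. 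For $c=3$ this is just the lemma for the reversed pair $(y,x)$, so nothing has been reduced.

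Second, the forbidden-configuration check you hope for fails. Suppose two cliques are joined by a bipartite graph with biadjacency matrix $B$ of maximum degree at most $2$, and let $J$ be the all-ones matrix. Then
\[
 A+3I=\begin{pmatrix}J&0\\0&J\end{pmatrix}+\begin{pmatrix}2I&B\\B^{\mathsf T}&2I\end{pmatrix}\succeq0,
\]
so two vertices of $C_y$ each sending two edges into $C_x$, crossing or not, never give an eigenvalue below $-3$. The small Hoffman subgraphs behave the same way. With $f_x,f_y$ and a $K_{2,2}$ across, the special matrix is $-I+A(K_{2,2})$. If the four vertices also lie in one third quasi-clique, it is $\bigl(\begin{smallmatrix}-2&-1\\-1&-2\end{smallmatrix}\bigr)\oplus\bigl(\begin{smallmatrix}-2&-1\\-1&-2\end{smallmatrix}\bigr)$. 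Both have smallest eigenvalue exactly $-3$. Concretely, for $c=3$ consider the scenario $m=m'=1$, where every vertex of $Z_{x,y}$ and of $Z_{y,x}$ has exactly two neighbors across. It passes both the edge count and the interlacing tests you describe.

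What is missing is a second use of sesqui-regularity, applied to a pair of vertices \emph{both outside} $\mathfrak g$. Take $u\in Z_{y,x}$. It has at most $35$ neighbors outside $C_x$, while $|Z_{x,y}|\ge k-35>35$, so some $z\in Z_{x,y}$ is nonadjacent to $u$. If $m<c$, then $\operatorname{dist}_\Gamma(u,z)=2$ through the clique $C_x$. The disjoint sets $N_\Gamma(z)\cap C_x$ and $N_\Gamma(u)\cap C_y$ consist of common neighbors of $u$ and $z$, with sizes $c-m$ and $c-m'$. Hence $m+m'\ge c$.

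This is exactly what kills $m=m'=1$ for $c=3$, since it would give $4>3$ common neighbors. Combine $m+m'\ge c$ with the double count above, $|Z_{x,y}|\ge k-35$, and $|Z_{y,x}|\le k+1-m-m'$. This forces a numerical contradiction whenever $m$ is below the stated bound. For example, for $c=3$ and $m=1$ one gets $m'=2$ and $2(k-35)\le k-2$.

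Note also that aiming at $w_x\le1$ for every $c$ overshoots. The lemma only needs $w_x\le c-2$, respectively $w_x\le c-3$. For $c=4$ the values $m=m'=2$ satisfy all the constraints above.
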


\begin{proof}
Fix a pair of vertices $x$ and $y$ which are adjacent in $D$, and put
\[
 m:=|N_D(x)\cap F_{f_y}|,
 \quad
 m':=|N_D(y)\cap F_{f_x}|.
\]
Both $m$ and $m'$ are positive. In $\Gamma$, every vertex of $Z_{x,y}$ has exactly
$c-m$ neighbors in $C_x$ by
\eqref{eq:internal-common-count}. These neighbors lie  outside $\mathfrak{g}$ by \eqref{eq:outside-g-neighbours}, and hence they belong to $Z_{y,x}=C_x\setminus F_{f_x}$. Interchanging $x$ and $y$, shows that every vertex of $Z_{y,x}$ has exactly
$c-m'$ neighbors in  $Z_{x,y}=C_y\setminus F_{f_y}$. Consequently,
\begin{equation}
 (c-m)|Z_{x,y}|=(c-m')|Z_{y,x}|.
 \label{eq:two-way-edge-count}
\end{equation}

Suppose first that $c\geq5$ and, contrary to the desired conclusion,
$m\leq2$. Choose $u\in Z_{y,x}$. Every vertex of $C_x$ has at most
$35$ neighbors outside $C_x$ by \eqref{eq:clique order}, whereas
$|Z_{x,y}|\geq k-35\geq63$. Hence some $z\in Z_{x,y}$ is nonadjacent
to $u$. Since $|N_{\Gamma}(z)\cap C_x|=c-m\geq1$, the vertices $u$ and $z$ have a common neighbor; hence  $\dist_{\G}(u,z)=2$. The two disjoint sets
\[
 N_{\Gamma}(z)\cap C_x
 \quad\text{and}\quad
 N_{\Gamma}(u)\cap Z_{x,y}
\]
consist of common neighbors of $u$ and $z$. Their sizes are
$c-m$ and $c-m'$, respectively. Sesqui-regularity therefore gives
\[
 (c-m)+(c-m')\leq c,
\]
and therefore
\begin{equation}
 m+m'\geq c.
 \label{eq:two-way-degree-sum}
\end{equation}
The left-hand side of \eqref{eq:two-way-edge-count} is positive, so
$m'\leq c-1$. Moreover, by \eqref{eq:ordered-edge-set-bounds3}, 
\[
|Z_{y,x}|=k+1-d_D(x)-|F_{f_x}|
 \leq k+1-m-m',
\]
because $d_D(x)\geq m$ and $|F_{f_x}|\geq m'$. Thus
\begin{equation}
 (c-m)(k-35)
 \leq(c-m')(k+1-m-m').
 \label{eq:retention-numerical}
\end{equation}
If $m=1$, then \eqref{eq:two-way-degree-sum} and $m'\leq c-1$ give
$m'=c-1$; \eqref{eq:retention-numerical} then gives
$k\leq45$ when $c\in\{5,6,7,8\}$. If $m=2$,
then $m'\in\{c-2,c-1\}$, and \eqref{eq:retention-numerical}  gives $k\leq97$. Both alternatives contradict $k\geq98$.
Therefore $m\geq3$ whenever $c\geq5$.

Finally, suppose that $c\in\{3,4\}$ and $m=1$. The preceding argument
again gives $m+m'\geq c$ and $m'\leq c-1$, so $m'=c-1$. Here
\eqref{eq:retention-numerical} gives $k\leq68$ for $c=3$ and
$k\leq51$ for $c=4$, again a contradiction. Hence $m\geq2$ in this
case. This proves \eqref{eq:neighbor-expansion}.
\end{proof}

The next two lemmas hold throughout the range $3\leq c\leq 8$. They describe the interaction between the independent sets $F_f$ and show that the special graph $D$ is regular of even valency. These structural facts will be used later only in the case $c\in\{3,4\}$.

\begin{lemma}
\label{lem:two-set-cycles} 
Under Assumptions~\ref{ass:Gamma-h}
and~\ref{ass:exceptional-factor}, let
\(f_i\neq f_j\) be fat vertices of
\(\mathfrak g\). Every nontrivial
connected component of \(D[F_{f_i}\cup F_{f_j}]\) is a cycle of even length.
Consequently, every vertex of \(D\) has either zero or two neighbors in
each set \(F_f\) not containing it.
\end{lemma}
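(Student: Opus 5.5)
The plan is to identify the induced subgraph $B:=D[F_{f_i}\cup F_{f_j}]$ as a bipartite graph whose largest eigenvalue is at most $2$ and whose non-isolated vertices all have degree at least $2$. A standard spectral argument then forces every nontrivial component to be a cycle.

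First, by Lemma~\ref{lem:special-graph-of-g}(ii)(e) the sets $F_{f_i}$ and $F_{f_j}$ are disjoint and independent in $D$. Hence $B$ is bipartite with parts $F_{f_i}$ and $F_{f_j}$. Since $B$ is an induced subgraph of $D$, Lemma~\ref{lem:interlacing} together with \eqref{eq:Sp-D} gives $\lambda_{\min}(B)\ge\lambda_{\min}(D)\ge-2$. The spectrum of a bipartite graph is symmetric about $0$, so $\lambda_{\max}(B)\le2$. The same bound holds for every connected component $B'$ of $B$, because $B'$ is again an induced subgraph of $D$.

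Second, let $B'$ be a nontrivial component, and let $x\in V(B')$, say $x\in F_{f_i}$. Then $x$ has a $D$-neighbor $y\in F_{f_j}$, so $f_y=f_j$. Lemma~\ref{lem:retention} gives
\[
 d_{B'}(x)=|N_D(x)\cap F_{f_y}|\ge2 .
\]
This holds for all $c$ in the range $3\le c\le 8$, since the bound there is $2$ or $3$. Therefore $B'$ has minimum degree at least $2$. On the other hand, the Rayleigh quotient with the all-ones vector gives
\[
 \lambda_{\max}(B')\ge\frac{2|E(B')|}{|V(B')|}\ge2,
\]
with equality in the first inequality only if $B'$ is regular. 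Combined with $\lambda_{\max}(B')\le2$, both inequalities are equalities. Hence $B'$ is $2$-regular and connected, that is, a cycle, and it has even length because it is bipartite.

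For the consequence, take $x\in V(D)$ and a fat vertex $f\ne f_x$ of $\mathfrak g$, and apply the first part with $f_i=f_x$ and $f_j=f$. If $x$ has no neighbor in $F_f$ there is nothing to prove. Otherwise $x$ lies in a nontrivial component of $D[F_{f_x}\cup F_f]$, which is a cycle. So $x$ has exactly two neighbors in $F_f$, since every neighbor of $x$ in that induced subgraph lies in $F_f$.

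The only delicate step is the degree lower bound. Spectral information alone (for instance $m\le4$ from a star $K_{1,m}$) would not force cycles, so the argument genuinely relies on Lemma~\ref{lem:retention} to exclude degree-one vertices, after which the eigenvalue bound does the rest.
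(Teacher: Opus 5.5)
Your proposal is correct and follows essentially the same route as the paper. Both arguments get bipartiteness from the independence of the sets $F_f$ and $\lambda_{\max}\le 2$ from interlacing plus spectral symmetry; Lemma~\ref{lem:retention} gives minimum degree at least $2$, and the all-ones Rayleigh quotient then forces each nontrivial component to be $2$-regular, hence an even cycle.
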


\begin{proof}
Let \(H\) be a nontrivial connected component of
\(D[F_{f_i}\cup F_{f_j}]\) with minimum valency $\delta(H)$ and largest
eigenvalue $\lambda_{\max}(H)$. Every vertex of \(H\) is incident with an
edge of \(H\). Lemma~\ref{lem:retention} shows that each vertex of \(H\)
has at least two neighbors. Thus \(\delta(H)\geq2\). Since \(H\) is
an induced subgraph of \(D\), Lemma~\ref{lem:interlacing} and \eqref{eq:Sp-D} give
\[
  \lambda_{\min}(H)\geq\lambda_{\min}(D)\geq-2.
\]
The graph \(H\) is bipartite by Lemma~\ref{lem:special-graph-of-g}
{\rm (ii)} $(e)$, and hence
\(\lambda_{\max}(H)=-\lambda_{\min}(H)\leq2\). On the other hand, the Rayleigh
quotient of the all-ones vector yields
\[
 \lambda_{\max}(H)\geq\frac{2|E(H)|}{|V(H)|}\geq\delta(H)\geq2.
\]
Equality holds throughout. Thus the average valency and
the minimum valency of \(H\) are both equal to \(2\), so \(H\) is a
cycle; since it is bipartite, the cycle has even length. The final assertion
follows immediately.
\end{proof}

\begin{lemma}
\label{lem:D-regular}
Under Assumptions~\ref{ass:Gamma-h}
and~\ref{ass:exceptional-factor}, the graph
\(D\) is regular of even valency.
\end{lemma}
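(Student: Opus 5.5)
Evenness is almost immediate from Lemma~\ref{lem:two-set-cycles}: for $u\in V(D)$, every $D$-neighbor of $u$ lies in some $F_f$ with $f\neq f_u$, and $u$ has either $0$ or $2$ neighbors in each such $F_f$. Hence $d_D(u)=2\,t(u)$, where $t(u)$ is the number of fat vertices $f\neq f_u$ with $N_D(u)\cap F_f\neq\varnothing$. So $d_D(u)$ is even for every $u$, and it remains to prove regularity.

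For regularity I would use the valency identity \eqref{eq:ambient-valency}, namely $k=|C(f_u)|-1+d_D(u)$. It shows that $d_D(u)$ depends only on $|C(f_u)|$, and therefore vertices with the same fat neighbor have the same $D$-valency. Since $D$ is connected by Lemma~\ref{lem:special-graph-of-g}, it suffices to show $d_D(x)=d_D(y)$ for every edge $xy$ of $D$. Equivalently, I must show $|C_x|=|C_y|$.

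To do this, I would return to the double count \eqref{eq:two-way-edge-count}. By Lemma~\ref{lem:two-set-cycles}, $m=|N_D(x)\cap F_{f_y}|=2$ and symmetrically $m'=|N_D(y)\cap F_{f_x}|=2$. Hence $(c-2)|Z_{x,y}|=(c-2)|Z_{y,x}|$, and since $c\geq3$ this gives $|Z_{x,y}|=|Z_{y,x}|$. By \eqref{eq:ordered-edge-set-bounds2}, $|Z_{x,y}|=|C_y|-|F_{f_y}|$ and $|Z_{y,x}|=|C_x|-|F_{f_x}|$, so I still need $|F_{f_x}|=|F_{f_y}|$.

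The component of $D[F_{f_x}\cup F_{f_y}]$ through the edge $xy$ is an even cycle alternating between the two sets, but these sets may contain vertices outside that component, so the cycle alone does not give equal sizes. This is the step I expect to be the main obstacle. I see two ways around it.

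\begin{enumerate}
\item Use \eqref{eq:ordered-edge-set-bounds3}, which says $|Z_{x,y}|=k+1-d_D(y)-|F_{f_y}|$. This only re-expresses the same quantities, so the real work is to compare $|F_{f_x}|$ with $|F_{f_y}|$.
\item Run the distance-two argument from Lemma~\ref{lem:retention} once more. Take $u\in Z_{y,x}$ and $z\in Z_{x,y}$ that are nonadjacent; they are at distance two. Count their common neighbors: these include $c-2$ vertices in $C_x$ and $c-2$ vertices in $Z_{x,y}$, so $2(c-2)\leq c$. This forces $c\leq4$, which is consistent but does not directly compare $|F_{f_x}|$ and $|F_{f_y}|$.
\end{enumerate}

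A cleaner route avoids $F$ altogether. Each $z\in Z_{x,y}$ has exactly $c-2$ neighbors in $Z_{y,x}$, and conversely. The bipartite graph between these two sets, both of size about $k$, is therefore biregular with equal degrees, which already yields $|Z_{x,y}|=|Z_{y,x}|$. Writing $|C_y|=k+1-d_D(y)$ and $|C_x|=k+1-d_D(x)$, I would get
\[
 d_D(x)+|F_{f_x}|=d_D(y)+|F_{f_y}|.
\]
To finish, I would sum this relation along edges, or show that $d_D+|F_{f}|$ is constant while $|F_f|$ is constant. For the latter, I would try a Perron/eigenvalue argument on $D$. Since $\lambda_{\min}(D)\geq-2$ and each $D[F_f\cup F_{f'}]$ is a union of even cycles, $D$ looks like a line-graph-type structure. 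If direct counting fails, I would fall back to the Perron/eigenvalue route.
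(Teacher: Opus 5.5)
\textbf{There is a genuine gap: the proposal does not prove regularity.} Your evenness argument is correct and matches the paper. So is your reduction of regularity to showing $d_D(x)=d_D(y)$ for every edge $xy$ of $D$, which uses that $d_D$ is constant on each $F_f$ by \eqref{eq:ambient-valency} and that $D$ is connected. The failure is in that edge step. Your double count \eqref{eq:two-way-edge-count} with $m=m'=2$ is valid, but it yields only one relation:
\[
d_D(x)+|F_{f_x}|=d_D(y)+|F_{f_y}|,
\]
that is, $d_D(u)+|F_{f_u}|$ is constant on $D$. Summing this along edges cannot separate the two terms. The proposal then stops at an unspecified ``Perron/eigenvalue argument''.

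\textbf{The missing idea.} The paper applies positive semidefiniteness of $A(D)+2I$ (from \eqref{eq:Sp-D}) directly to the even cycle you already found.
\begin{itemize}
\item Let $H$ be the component of $D[F_{f_x}\cup F_{f_y}]$ containing $xy$. Put $X=V(H)\cap F_{f_x}$ and $Y=V(H)\cap F_{f_y}$.
\item Let $\mathbf q$ be $1$ on $X$, $-1$ on $Y$, and $0$ elsewhere.
\item $H$ is an induced even cycle, and all its edges join $X$ to $Y$. Hence $\mathbf q^{\mathsf T}(A(D)+2I)\mathbf q=-2|E(H)|+2|V(H)|=0$, so $(A(D)+2I)\mathbf q=0$.
\item Pairing with the all-ones vector and using that $d_D$ is constant on $F_{f_x}$ and on $F_{f_y}$ gives $0=|X|(d_D(x)+2)-|Y|(d_D(y)+2)$.
\item Since $|X|=|Y|$, this gives $d_D(x)=d_D(y)$.
\end{itemize}

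This avoids the obstacle you identified. Vertices of $F_{f_x}\cup F_{f_y}$ outside $H$ get weight $0$ in $\mathbf q$, so you never need to compare $|F_{f_x}|$ with $|F_{f_y}|$. With this step in hand your $Z$-count is unnecessary, although combined with it you also obtain $|F_{f_x}|=|F_{f_y}|$ as a byproduct.
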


\begin{proof}
Fix a fat vertex $f_i$ of $\mathfrak g$ such that
$F_{f_i}\neq\varnothing$. \eqref{eq:ambient-valency} shows that
$d_D(x)=k+1-|C(f_i)|$ for every $x\in F_{f_i}$. Thus $d_D(x)$ is
constant on $F_{f_i}$; denote its value by $d_{f_i}$.

Suppose that \(D[F_{f_i}\cup F_{f_j}]\) has an edge joining $F_{f_i}$ and $F_{f_j}$. Let $H$ be the component of \(D[F_{f_i}\cup F_{f_j}]\) containing that edge. By Lemma~\ref{lem:two-set-cycles}, \(H\) is a cycle of even length. Let
\[
  X:=V(H)\cap F_{f_i},\qquad Y:=V(H)\cap F_{f_j}.
\]
Define \(\mathbf{q}\in\mathbb R^{V(D)}\) coordinatewise by
\[
 \mathbf{q}_v=
 \begin{cases}
  1,&v\in X,\\
  -1,&v\in Y,\\
  0,&v\notin V(H).
 \end{cases}
\]
Since \(H\) is a cycle of even length with bipartition \((X,Y)\),
\(|X|=|Y|\) and \(|E(H)|=|X|+|Y|\). Therefore
\[
  \mathbf{q}^{\mathsf T}(A(D)+2I)\mathbf{q}
  =-2|E(H)|+2(|X|+|Y|)=0.
\]
By \eqref{eq:Sp-D}, the matrix \(A(D)+2I\) is positive semidefinite;
hence \((A(D)+2I)\mathbf{q}=0\). Multiplying by the all-ones row vector gives
\[
  0=\mathbf{1}^{\mathsf T}(A(D)+2I)\mathbf{q}
  =|X|(d_{f_i}+2)-|Y|(d_{f_j}+2)
  =|X|(d_{f_i}-d_{f_j}).
\]
Thus \(d_{f_i}=d_{f_j}\) whenever an edge of \(D\) joins \(F_{f_i}\) and
\(F_{f_j}\). The auxiliary graph whose vertices are the nonempty sets
\(F_f\), with two such sets adjacent whenever  there is an edge in $D$ joining them, is connected because \(D\) is connected. Hence all values \(d_f\)
are equal, and \(D\) is regular.
Finally, Lemma~\ref{lem:two-set-cycles} shows that every set \(F_f\)
not containing a given vertex contributes either zero or two of its
neighbors, so the valency of $D$ is even.
\end{proof}

\begin{lemma}
\label{lem:local-CP} Under Assumptions~\ref{ass:Gamma-h}
and~\ref{ass:exceptional-factor}, suppose that \(c\in\{3,4\}\) and the
valency of \(D\) is \(2r\). Then \(r\geq1\), and for
every \(u\in V(D)\),
\[
  D[N_D(u)]\cong K_{2,\ldots,2}=\operatorname{CP}(r).
\]
\end{lemma}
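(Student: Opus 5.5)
The plan is to first pin down how $N_D(u)$ splits among the sets $F_f$, and then show that any two neighbors of $u$ lying in different sets $F_f$ are adjacent in $D$. For $r\ge1$: by Lemma~\ref{lem:special-graph-of-g}(ii)$(d)$, $D$ is not $1$-integrable, so $D\not\cong K_1$. Since $D$ is connected, its valency is positive, and by Lemma~\ref{lem:D-regular} it is even, say $2r$ with $r\geq1$. Fix $u\in V(D)$. The set $F_{f_u}$ is independent (Lemma~\ref{lem:special-graph-of-g}(ii)$(e)$), so $N_D(u)$ avoids it. By Lemma~\ref{lem:two-set-cycles}, every other set $F_f$ contains $0$ or $2$ neighbors of $u$. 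Hence $N_D(u)$ is a disjoint union of exactly $r$ pairs
\[
P_i:=N_D(u)\cap F_{f_i}=\{v_i,v_i'\},\qquad i=1,\dots,r,
\]
with $f_1,\dots,f_r$ distinct. Each pair $P_i$ is independent, again by Lemma~\ref{lem:special-graph-of-g}(ii)$(e)$. It therefore remains to show that $v\sim_D w$ whenever $v\in P_i$ and $w\in P_j$ with $i\neq j$; this yields $D[N_D(u)]\cong\operatorname{CP}(r)$.

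For the adjacency step I would work with a real Gram representation. Since $A(D)+2I$ is positive semidefinite by \eqref{eq:Sp-D}, choose real vectors $\mathbf{a}_x$ with $(\mathbf a_x,\mathbf a_x)=2$, $(\mathbf a_x,\mathbf a_y)=1$ on edges of $D$, and $0$ otherwise. The component of $D[F_{f_u}\cup F_{f_i}]$ through $u$ is an even cycle containing the path $v_i\,u\,v_i'$. By the argument in Lemma~\ref{lem:D-regular}, the alternating $\pm1$ vector on this cycle lies in the kernel of $A(D)+2I$. This gives linear relations among the $\mathbf a$'s of the cycle vertices, and these relations can be tested against $\mathbf a_w$. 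Concretely, suppose $v_i\not\sim_D w$ for some $w\in P_j$. Then the kernel relation for the $(f_u,f_i)$-cycle, paired with $\mathbf a_w$, says that the alternating sum of the $\mathbf a$-inner products of $w$ with the vertices of that cycle is $0$. The $+1$ coming from $u\sim w$ must then be balanced by neighbors of $w$ in $F_{f_i}$ on that cycle. By Lemma~\ref{lem:two-set-cycles}, $w$ has either $0$ or exactly $2$ neighbors in $F_{f_i}$, and I would use this to force $w$ to be adjacent to $v_i$ or $v_i'$. Applying the same argument to the other neighbor of $w$ in $F_{f_i}$, and then with $v_i$ and $v_i'$ interchanged, should give $w\sim v_i$ and $w\sim v_i'$.

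If the linear algebra does not close cleanly, I would bring in the ambient graph $\Gamma$ and the hypothesis $c\in\{3,4\}$. Apply Lemma~\ref{lem:ordered-edge-sets} to the edge $uv_i$. Since $|N_D(u)\cap F_{f_i}|=2$, every $z\in Z_{u,v_i}$ has exactly $w_u=c-2\in\{1,2\}$ neighbors in $C_u$. Moreover $z$ is at distance two from $u$, and its common neighbors with $u$ are these $c-2$ vertices together with $v_i$ and $v_i'$. Next I would take $w\in P_j$ and $z\in Z_{u,v_i}$ and count their common neighbors; these two vertices are nonadjacent by the sum criterion, Lemma~\ref{lem:sum-criterion}. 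The outside-$\mathfrak g$ contribution comes from $C_w\cap N_\Gamma(z)$. The contribution inside $\mathfrak g$ is $N_D(w)\cap F_{f_i}$, which by Lemma~\ref{lem:two-set-cycles} has size $0$ or $2$. A double count of the edges between $C_u\setminus F_{f_u}$, $C_{v_i}\setminus F_{f_i}$ and $C_w\setminus F_{f_w}$, in the style of \eqref{eq:two-way-edge-count}, should then show that the value $0$ contradicts $k\geq98$. This gives $w$ two neighbors in $F_{f_i}$, which the cycle structure should force to be $v_i$ and $v_i'$.

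The main obstacle is this final adjacency step. Lemma~\ref{lem:two-set-cycles} says that $w$ has two neighbors in $F_{f_i}$ (or none), but it does not say which ones. Identifying them as exactly $v_i$ and $v_i'$ is where either the kernel-vector relations or the small value of $c$ has to be used essentially, and I expect most of the work to go there.
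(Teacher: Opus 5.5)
\textbf{There is a genuine gap: the adjacency of vertices in different pairs is never proved.}

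Your proof of $r\geq1$ and the splitting of $N_D(u)$ into $r$ independent pairs $P_i$ are fine. Your kernel-vector step is also the right start. Read off the $w$-coordinate of $(A(D)+2I)\mathbf q=0$, where $\mathbf q$ is the alternating vector on the component of $D[F_{f_u}\cup F_{f_i}]$ through $u$. This shows that $w\in P_j$ has a neighbor in $F_{f_i}$, hence exactly two. Symmetrically, $v_i$ has exactly two neighbors in $F_{f_j}$.

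The gap is what comes next. You aim to show that the two neighbors of $w$ in $F_{f_i}$ are $v_i$ and $v_i'$, and nothing you propose establishes this:
\begin{itemize}
\item The kernel relation only equates two neighbor counts on the cycle. It does not say where on the (possibly long) cycle those neighbors sit.
\item Lemma~\ref{lem:two-set-cycles} does not locate them either.
\item Your double count with $Z_{u,v_i}$ only aims to rule out $|N_D(w)\cap F_{f_i}|=0$, which the kernel argument already gives.
\end{itemize}
The phrases ``should force'' and ``should give'' mark exactly the step that is the content of the lemma.

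The missing idea is that you never need to identify those neighbors. Instead, apply $c\leq4$ to the pair $(v_i,w)$ in $\Gamma$. Suppose $v_i\not\sim_D w$. Because $v_i$ and $w$ have distinct unique fat neighbors, a $\Gamma$-edge between them would be an edge of $D$. So they are nonadjacent in $\Gamma$ and at distance two via $u$. Now count common $\Gamma$-neighbors:
\begin{itemize}
\item The two vertices of $N_D(w)\cap F_{f_i}$ differ from $v_i$ and lie in the clique $C(f_i)$. So they are $\Gamma$-adjacent to $v_i$ as well as to $w$.
\item Likewise, the two vertices of $N_D(v_i)\cap F_{f_j}$ are $\Gamma$-adjacent to $w$, because $C(f_j)$ is a clique.
\item Add $u$.
\end{itemize}
These five vertices lie in the pairwise disjoint sets $F_{f_i}$, $F_{f_j}$, $F_{f_u}$, so they are distinct common neighbors of $v_i$ and $w$. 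This contradicts $c\leq4$. This is the paper's argument, and it is exactly where the hypothesis $c\leq4$ is used.
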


\begin{proof}
If \(r=0\), then the connected graph \(D\) is \(K_1\), which is
$1$-integrable, contrary to Lemma~\ref{lem:special-graph-of-g}
{\rm (ii)} $(d)$. Thus \(r\geq1\). For each
\(u\in V(D)\), Lemma~\ref{lem:two-set-cycles} partitions its neighborhood
into independent pairs contained in distinct sets \(F_f\):
\[
  N_D(u)=P_1\,\dot\cup\cdots\dot\cup P_r,
  \qquad |P_i|=2.
\]
It remains only to show that vertices belonging to different pairs are
adjacent. Choose \(y\in P_j\) and \(z\in P_\ell\) with \(j\neq\ell\).
Then \(f_u,f_y,f_z\) are pairwise distinct.

Let \(H\) be the component of
\(D[F_{f_u}\cup F_{f_z}]\) containing the edge \(\{u,z\}\). Put
\[
  U:=V(H)\cap F_{f_u},\qquad Z:=V(H)\cap F_{f_z},
\]
with \(u\in U\). Define the vector \(\mathbf{p}\in\mathbb R^{V(D)}\) 
coordinatewise by
\[
 \mathbf{p}_v=
 \begin{cases}
  1,&v\in U,\\
  -1,&v\in Z,\\
  0,&v\notin V(H).
 \end{cases}
\]
Since \(H\) is a cycle of even length with bipartition \((U,Z)\),
\(|E(H)|=|U|+|Z|\). Hence
\[
 \mathbf{p}^{\mathsf T}(A(D)+2I)\mathbf{p}
 =-2|E(H)|+2(|U|+|Z|)=0.
\]
By \eqref{eq:Sp-D}, the matrix \(A(D)+2I\) is positive semidefinite.
Therefore the preceding equality implies
\[
 (A(D)+2I)\mathbf{p}=0.
\]
The vertex \(y\) lies outside \(U\cup Z\). Taking the \(y\)-coordinate
gives
\[
  |N_D(y)\cap U|=|N_D(y)\cap Z|.
\]
The left-hand side is positive because \(y\sim_D u\) and \(u\in U\).
Hence \(y\) has a neighbor in \(F_{f_z}\), and
Lemma~\ref{lem:two-set-cycles} shows that it has exactly two neighbors
there. By symmetry, \(z\) has exactly two neighbors in \(F_{f_y}\).

Assume that \(y\not\sim_D z\). Since \(y,z\) have distinct unique fat
neighbors, any slim adjacency between them would be a positive edge of
$D$. Thus \(y\) and \(z\) are nonadjacent in \(\G\), while
$\dist_\G(y,z)=2$ through \(u\). Each of the two vertices in
\(N_D(y)\cap F_{f_z}\) is adjacent in \(\Gamma\) to both \(y,z\);
similarly, the two vertices in \(N_D(z)\cap F_{f_y}\) are common
neighbors of \(y\) and \(z\). These four vertices are distinct and are also
distinct from \(u\), because they belong to the three pairwise disjoint
sets \(F_{f_y},F_{f_z},F_{f_u}\). Hence \(y,z\) have at least five
common neighbors in \(\Gamma\), contrary to \(c\leq4\). Therefore
\(y\sim_D z\), proving the claim.
\end{proof}

\subsection{The case \texorpdfstring{\(c\in\{5,6,7,8\}\)}{c at least 5}}

This subsection treats the range $c\in\{5,6,7,8\}$. 

\begin{lemma}\label{lem:two-set-obstruction}
Let \(X\) be a graph whose vertex set is partitioned into independent
sets. Suppose that, for any two adjacent vertices \(u\) and \(v\) in $X$, the vertex
\(u\) has at least three neighbors in the member of the partition
containing \(v\). If \(X\) has an edge, then
\[
  \lambda_{\min}(X)\leq -3.
\]
\end{lemma}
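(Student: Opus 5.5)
The plan is to repeat the argument of Lemma~\ref{lem:two-set-cycles}, with the bound $2$ replaced by $3$.

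\textbf{Step 1: set-up.} Since $X$ has an edge, choose adjacent vertices $u$ and $v$, lying in partition members $P\ni u$ and $Q\ni v$. Because $P$ is independent, $P\neq Q$. Let $H$ be the connected component of $X[P\cup Q]$ that contains the edge $\{u,v\}$. A connected component of an induced subgraph is itself an induced subgraph of $X$. Since $P$ and $Q$ are independent, $H$ is bipartite with parts $V(H)\cap P$ and $V(H)\cap Q$.

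\textbf{Step 2: minimum valency at least $3$.} Let $w\in V(H)$, say $w\in P$. Since $H$ is connected and nontrivial, $w$ has a neighbor $w'$ in $H$, and $w'$ must lie in $Q$. By hypothesis, $w$ has at least three neighbors in $Q$. Each of these lies in $P\cup Q$ and is adjacent to $w$, so it belongs to the component $H$. The case $w\in Q$ is symmetric. Hence $\delta(H)\geq3$.

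\textbf{Step 3: eigenvalue bound.} Taking the Rayleigh quotient of the all-ones vector gives
\[
\lambda_{\max}(H)\geq \frac{2|E(H)|}{|V(H)|}\geq \delta(H)\geq 3 .
\]
Since $H$ is bipartite, its spectrum is symmetric about $0$, so $\lambda_{\min}(H)=-\lambda_{\max}(H)\leq-3$. Finally, $H$ is an induced subgraph of $X$, so Lemma~\ref{lem:interlacing} yields
\[
\lambda_{\min}(X)\leq\lambda_{\min}(H)\leq-3 .
\]

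\textbf{Main point to check.} The only step needing care is Step 2: every vertex of the component, not just $u$ and $v$, must inherit the hypothesis. This holds because each vertex of $H$ has some edge into the opposite part, and the hypothesis is then applied to that edge.
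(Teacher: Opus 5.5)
Your proof is correct and follows essentially the same route as the paper. You take a nontrivial component $H$ of the bipartite graph induced on two partition members, apply the hypothesis to an edge at each vertex to get $\delta(H)\geq 3$ (your observation that all neighbors of $w$ in $Q$ stay inside $H$ is exactly the paper's point), and then combine the all-ones Rayleigh quotient, the symmetry of the bipartite spectrum, and interlacing.
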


\begin{proof}
Choose two members \(F_i,F_j\) of the partition such that
\(X[F_i\cup F_j]\) contains an
edge, and let \(H\) be a nontrivial connected component of this induced
bipartite graph with minimum valency $\delta(H)$ and largest eigenvalue $\lambda_{\max}(H)$. Every vertex \(z\in V(H)\) is incident with an edge
\(\{z,w\}\) of \(H\). By hypothesis, \(z\) has at least three neighbors in the
set containing \(w\); all of those neighbors belong to \(H\). Hence
\[
  \delta(H)\geq 3.
\]
Because \(H\) is bipartite, its spectrum is symmetric about zero, and thus
\(\lambda_{\min}(H)=-\lambda_{\max}(H)\). The Rayleigh quotient of the all-ones
vector gives
\[
  \lambda_{\max}(H)\geq \frac{2|E(H)|}{|V(H)|}
  \geq \delta(H)\geq 3.
\]
Therefore \(\lambda_{\min}(H)\leq -3\). Since \(H\) is an induced
subgraph of \(X\), eigenvalue interlacing yields
\[
  \lambda_{\min}(X)\leq \lambda_{\min}(H)\leq -3.
\]
\end{proof}

\begin{proposition}
\label{prop:c-ge-5}
Under Assumption~\ref{ass:Gamma-h}, if \(c\geq 5\),
then every indecomposable factor of \(\mathfrak h\) admits an integral
reduced representation of norm \(3\).
\end{proposition}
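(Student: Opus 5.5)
We argue by contradiction. Suppose some indecomposable factor $\mathfrak g$ of $\mathfrak h$ admits no integral reduced representation of norm $3$. Then Assumption~\ref{ass:exceptional-factor} is in force, and all the structural lemmas apply to $\mathfrak g$ and its special graph $D$.

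By Lemma~\ref{lem:special-graph-of-g}, every slim vertex of $\mathfrak g$ has exactly one fat neighbor. The sets $F_f$ partition $V(D)$ into independent sets, $D$ is connected, and $\lambda_{\min}(D)\geq-2$. We apply Lemma~\ref{lem:two-set-obstruction} with $X=D$ and this partition $\{F_f\}$. Its hypothesis is exactly Lemma~\ref{lem:retention} in the case $c\in\{5,6,7,8\}$: for adjacent $x,y$ in $D$, we have $|N_D(x)\cap F_{f_y}|\geq3$, and $F_{f_y}$ is the member of the partition containing $y$.

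If $D$ has an edge, Lemma~\ref{lem:two-set-obstruction} gives $\lambda_{\min}(D)\leq-3$, contradicting \eqref{eq:Sp-D}. Hence $D$ has no edge. Since $D$ is connected, $D\cong K_1$. But $K_1$ is $1$-integrable, or more directly, the proof of Lemma~\ref{lem:special-graph-of-g}(ii)(d) shows that the vector $\mathbf e_1+\mathbf e_2$ gives an integral reduced representation of norm $3$. This contradicts Lemma~\ref{lem:special-graph-of-g}(ii)(d), which says $D$ is not $1$-integrable.

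Therefore every indecomposable factor has an integral reduced representation of norm $3$.

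The argument uses $c\leq8$ only through Assumption~\ref{ass:Gamma-h}. No new estimates are needed. The only point to check is that the hypothesis of Lemma~\ref{lem:two-set-obstruction} is met verbatim, that is, that the partition member containing $v$ is $F_{f_v}$, which is immediate. I do not expect any genuine obstacle: the substantive work was done in Lemma~\ref{lem:retention}.
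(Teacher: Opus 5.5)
Your proposal is correct and matches the paper's proof essentially step for step. Lemma~\ref{lem:retention} supplies the hypothesis of Lemma~\ref{lem:two-set-obstruction} for the partition $\{F_f\}$, which forces $D$ to be edgeless and hence $D\cong K_1$, contradicting Lemma~\ref{lem:special-graph-of-g}(ii)(d). Your side remark, that the $K_1$ case is excluded directly by the vector $\mathbf e_1+\mathbf e_2$ in the proof of (ii)(d), is a sensible way to avoid relying on the degenerate meaning of $1$-integrability for $K_1$.
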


\begin{proof}
Suppose that an indecomposable factor \(\mathfrak g\) admits no such
representation. Then \(\mathfrak g\), together with its special graph
\(D\), satisfies Assumption~\ref{ass:exceptional-factor}, and
Lemma~\ref{lem:special-graph-of-g} applies. For any two vertices $x$ and $y$ adjacent in $D$, Lemma~\ref{lem:retention}  gives
\[
 |N_D(x)\cap F_{f_y}|\geq3,
 \qquad
 |N_D(y)\cap F_{f_x}|\geq3.
\]
If $D$ had an edge, Lemma~\ref{lem:two-set-obstruction} would imply
\(\lambda_{\min}(D)\leq -3\), contradicting
\eqref{eq:Sp-D}. Hence \(D\) is edgeless. Since \(D\) is connected,
\(D\cong K_1\). This graph is $1$-integrable, contradicting
Lemma~\ref{lem:special-graph-of-g} {\rm (ii)} $(d)$. Therefore no
exceptional factor exists.
\end{proof}

\subsection{The case \texorpdfstring{\(c\in\{3,4\}\)}{c equal to 3 or 4}}
This subsection treats the remaining case $c\in\{3,4\}$. The regularity and local structure established in Lemma \ref{lem:D-regular} and Lemma \ref{lem:local-CP} reduce the problem to the classification of connected locally cocktail-party graphs.

\begin{lemma}
\label{lem:locally-CP-classification}
Let \(X\) be a connected \(2r\)-regular graph such that
\[
  X[N_X(u)]\cong \operatorname{CP}(r)
  \qquad\text{for every }u\in V(X).
\]
Then the following hold.
\begin{enumerate}
\item If \(r=1\), then \(X\) is a cycle of length at least \(4\).
\item If \(r\geq 2\), then \(X\cong\operatorname{CP}(r+1)\).
\end{enumerate}
In either case, $X$ is $1$-integrable.
\end{lemma}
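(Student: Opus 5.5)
The plan is to treat $r=1$ and $r\geq2$ separately and then give explicit integral vectors in each case. Throughout, \(\operatorname{CP}(r)\) is the cocktail-party graph $K_{2,\ldots,2}$ with $r$ parts.

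\emph{Case $r=1$.} Here $X$ is connected and $2$-regular, so $X$ is a cycle $C_m$. The local condition says the two neighbors of each vertex are nonadjacent, since $\operatorname{CP}(1)=\overline{K_2}$. This excludes $m=3$, so $m\geq4$.

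\emph{Case $r\geq2$.} Fix $u$ and write $N_X(u)=\{a_1,b_1,\dots,a_r,b_r\}$, where the pairs $\{a_i,b_i\}$ are the non-edges of $X[N_X(u)]$. Consider $a_1$. Its neighbors inside $N_X(u)$ are the $2r-2$ vertices outside $\{a_1,b_1\}$, and together with $u$ this gives $2r-1$ neighbors. Since $X$ is $2r$-regular, $a_1$ has exactly one further neighbor $w$, which lies outside $\{u\}\cup N_X(u)$.

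Now use the local structure at $a_1$. The graph $X[N_X(a_1)]$ is $\operatorname{CP}(r)$, so each vertex of $N_X(a_1)$ has exactly one non-neighbor there. The vertex $u$ is adjacent to every vertex of $N_X(a_1)\setminus\{w\}$, so its unique non-neighbor in $N_X(a_1)$ must be $w$. Each $a_j$ or $b_j$ with $j\ge2$ lies in $N_X(a_1)$. Its non-neighbor within $N_X(u)\cap N_X(a_1)$ is its own partner, so it must be adjacent to $w$.

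The same argument applied to $b_1$ produces a vertex $w'$ adjacent to all $a_j,b_j$ with $j\geq2$. I expect the main care to be needed in showing $w=w'$. Since $r\geq2$, the vertex $a_2$ is adjacent to $u$, to $w$ and to $w'$. Consider the neighborhood of $a_2$: it contains $u$, and $u$'s unique non-neighbor in $N_X(a_2)$ must be the one vertex of $N_X(a_2)$ outside $\{u\}\cup N_X(u)$. Both $w$ and $w'$ are such vertices, so $w=w'$.

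It remains to show that $w$ is adjacent to $a_1,b_1$ and all $a_j,b_j$, so that $N_X(w)=N_X(u)$. Then $\{u,w\}\cup N_X(u)$ induces $\operatorname{CP}(r+1)$. Every vertex of this set already has valency $2r$ inside the set, so by regularity and connectivity it is all of $X$. Hence $X\cong\operatorname{CP}(r+1)$.

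\emph{Integrability.} For $X=\operatorname{CP}(r+1)$ with $r\ge2$, one has $\lambda_{\min}=-2$, so we need integral vectors whose Gram matrix is $A+2I$. Assign $\mathbf e_0+\mathbf e_i$ and $\mathbf e_0-\mathbf e_i$ to the two vertices of the $i$th pair, for $1\leq i\leq r+1$. The norms are $2$, partners have inner product $0$, and vertices in different pairs have inner product $1$, as required.

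For a cycle $C_m$ with $m\ge4$, the smallest eigenvalue lies in $[-2,-1)$, so again we need Gram matrix $A+2I$. Assign $\mathbf e_i+\mathbf e_{i+1}$ to the $i$th vertex for $i<m$. For the last vertex, if $m$ is even use $\mathbf e_m+\mathbf e_1$, which works because these cycles are line graphs. If $m$ is odd use $\mathbf e_m-\mathbf e_1$ and flip the sign of $\mathbf e_1$ in the first vector, i.e.\ take $\mathbf e_2-\mathbf e_1$. Then consecutive vectors have inner product $1$, and non-consecutive ones are orthogonal since $m\geq4$.
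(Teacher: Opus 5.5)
Your proof is correct and follows essentially the same route as the paper. The local $\operatorname{CP}(r)$ structure at each $x\in N_X(u)$ forces the unique neighbour of $x$ outside $\{u\}\cup N_X(u)$ to be adjacent to all of $N_X(u)$ except the pair containing $x$; you then show these outside neighbours coincide using the vertex $a_2$, where the paper instead compares the edge count $e_X(N_X(u),X_2(u))=2r$ with $2(2r-1)>2r$. A minor point: the cyclic assignment $\mathbf e_i+\mathbf e_{i+1}$ with indices taken mod $m$ already works for odd $m$, so your parity split for cycles is unnecessary, though your odd-case vectors are also valid.
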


\begin{proof}
Suppose first that \(r=1\). Then \(X\) is connected and \(2\)-regular,
so it is a cycle. The two neighbors of each vertex are nonadjacent in the
local graph, and hence the cycle has length at least \(4\). If its vertices
are indexed cyclically as \(x_1,\ldots,x_m\), define
\[
 \varphi(x_i):=\mathbf{e}_i+\mathbf{e}_{i+1}
  \qquad (1\leq i\leq m),
\]
where indices are taken modulo \(m\). These integral vectors have Gram
matrix \(A(X)+2I\) and thus $X$ is $1$-integrable.

Now assume \(r\geq 2\), and fix \(u\in V(X)\). Let
\(P_1,\ldots,P_r\) be the nonadjacent pairs in
\(X[N_X(u)]\cong\operatorname{CP}(r)\). If \(x\in P_i\), then within
\(\{u\}\cup N_X(u)\) the vertex \(x\) is adjacent to \(u\) and to the
\(2r-2\) vertices of \(N_X(u)\setminus P_i\). Since \(d_X(x)=2r\), the
vertex \(x\) has a unique neighbor outside \(\{u\}\cup N_X(u)\); denote it by
\(x'\). Necessarily \(x'\in X_2(u)\). Conversely, every vertex of \(X_2(u)\) is the unique neighbor outside \(\{u\}\cup N_X(u)\) of some vertex in $N_X(u)$. 

For an arbitrary vertex $x\in N_X(u)$, the vertices \(u\) and \(x'\) are
nonadjacent in the local graph at \(x\). Every vertex of \(\operatorname{CP}(r)\) has a unique
nonneighbor, so \(x'\) is adjacent to every vertex of
\(N_X(x)\setminus\{u,x'\}\). In particular,
\(x'\) is adjacent to \(x\) and to all \(2r-2\) vertices of
\(N_X(u)\setminus P_i\). Thus every vertex of \(X_2(u)\) has at least
\(2r-1\) neighbors in \(N_X(u)\).

Every vertex of \(N_X(u)\) has exactly one neighbor in \(X_2(u)\), so
\[
  e_X\bigl(N_X(u),X_2(u)\bigr)=2r.
\]
Conversely, since \(X_2(u)\neq\varnothing\), while
\[
  2(2r-1)>2r
\]
for \(r\geq 2\), the preceding edge count forces
\(X_2(u)=\{\bar u\}\). It follows that \(\bar u\) is adjacent to all
\(2r\) vertices of \(N_X(u)\), and hence
\(N_X(\bar u)=N_X(u)\).

The set \(\{u,\bar u\}\cup N_X(u)\) is closed under adjacency. Indeed,
if \(x\in P_i\), then all neighbors of \(x\) are
\[
  u,\quad \bar u,\quad\text{and the vertices of }N_X(u)\setminus P_i.
\]
Connectedness therefore implies
\(V(X)=\{u,\bar u\}\cup N_X(u)\). The only nonedges are
\(\{u,\bar u\}\) and the \(r\) pairs \(P_1,\ldots,P_r\), so
\(X\cong\operatorname{CP}(r+1)\).

Finally, index the nonadjacent pairs of \(\operatorname{CP}(r+1)\) by
\(\{x_i^+,x_i^-\}\), \(1\leq i\leq r+1\), and define
\[
  \varphi(x_i^+):=\mathbf{e}_0+\mathbf{e}_i,
  \qquad
   \varphi(x_i^-):=\mathbf{e}_0-\mathbf{e}_i.
\]
These integral vectors have Gram matrix \(A(X)+2I\) and $X$ is $1$-integrable.
\end{proof}

\begin{proposition}
\label{prop:c-3-4}
Under Assumption~\ref{ass:Gamma-h}, if
\(c\in\{3,4\}\), then every indecomposable factor of \(\mathfrak h\)
admits an integral reduced representation of norm \(3\).
\end{proposition}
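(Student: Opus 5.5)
We argue by contradiction, exactly as in Proposition~\ref{prop:c-ge-5}. Suppose some indecomposable factor \(\mathfrak g\) of \(\mathfrak h\) has no integral reduced representation of norm \(3\). Then \(\mathfrak g\) and its special graph \(D\) satisfy Assumption~\ref{ass:exceptional-factor}, so Lemma~\ref{lem:special-graph-of-g} applies. In particular, \(D\) is connected, is an ordinary graph with \(A(D)=\Sp(\mathfrak g)+I\) and \(\lambda_{\min}(D)\geq-2\), and is not \(1\)-integrable.

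Next we pin down the structure of \(D\). Lemma~\ref{lem:D-regular} shows that \(D\) is regular of even valency; write this valency as \(2r\). Since \(c\in\{3,4\}\), Lemma~\ref{lem:local-CP} gives \(r\geq1\) and \(D[N_D(u)]\cong\operatorname{CP}(r)\) for every \(u\in V(D)\). Thus \(D\) is a connected \(2r\)-regular locally cocktail-party graph. Lemma~\ref{lem:locally-CP-classification} then says that \(D\) is either a cycle of length at least \(4\) (when \(r=1\)) or \(\operatorname{CP}(r+1)\) (when \(r\geq2\)). In both cases \(D\) is \(1\)-integrable.

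This contradicts Lemma~\ref{lem:special-graph-of-g}~(ii)~(d). Alternatively, one can see the contradiction directly. A \(1\)-integrable representation of \(D\) gives an integral \(N\) with \(A(D)+2I=N^{\mathsf T}N\), and this equals \(\Sp(\mathfrak g)+3I\). Hence the columns of \(N\) form an integral reduced representation of norm \(3\) of \(\mathfrak g\). Either way, no exceptional factor exists, which proves the proposition.

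The real work is already contained in Lemmas~\ref{lem:two-set-cycles}, \ref{lem:D-regular} and \ref{lem:local-CP}, so there is no genuine obstacle here. The only point to check is that the hypotheses of Lemma~\ref{lem:locally-CP-classification} match exactly: connectedness, the same \(r\) in the regularity and in the local cocktail-party structure, and the case \(r\geq1\). All of these are supplied by the cited lemmas. Once the proposition is established, Theorem~\ref{thm:main} follows. Combining it with Proposition~\ref{prop:c-ge-5}, every indecomposable factor of \(\mathfrak h\) admits an integral reduced representation of norm \(3\). Taking the direct sum of these representations gives one for \(\mathfrak h\) itself, since \(\Sp(\mathfrak h)\) is block diagonal over its factors. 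Then Theorem~\ref{thm:representation-equivalence} gives an integral representation of \(\mathfrak h\), and restricting it to the slim vertices yields the \(1\)-integrability of \(\Gamma\).
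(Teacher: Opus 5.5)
Your proposal is correct and matches the paper's proof. It assumes an exceptional factor, uses Lemmas~\ref{lem:D-regular} and~\ref{lem:local-CP} to place $D$ under the hypotheses of Lemma~\ref{lem:locally-CP-classification}, concludes that $D$ is $1$-integrable, and contradicts Lemma~\ref{lem:special-graph-of-g}~(ii)~(d). Your alternative direct contradiction via $A(D)+2I=\Sp(\mathfrak g)+3I$ is also valid, since that classification lemma explicitly produces integral vectors with Gram matrix $A(D)+2I$.
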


\begin{proof}
Suppose that an indecomposable factor \(\mathfrak g\) admits no integral
reduced representation of norm \(3\), and let \(D\) be its special graph.
Then \(\mathfrak g\) and \(D\) satisfy
Assumption~\ref{ass:exceptional-factor}. Lemma~\ref{lem:D-regular}
and Lemma~\ref{lem:local-CP} show that \(D\) satisfies the hypotheses of
Lemma~\ref{lem:locally-CP-classification}. Hence $D$ is $1$-integrable,
contradicting Lemma~\ref{lem:special-graph-of-g} {\rm (ii)} $(d)$.
Therefore no exceptional factor exists.
\end{proof}

\begin{proof}[Proof of Theorem~\ref{thm:main}]
Set
\[
  \kappa_{4}
  :=\max\{\kappa_3,\kappa_5,98\},
\]
where \(\kappa_{3}\) and \(\kappa_5\) are the constants from
Theorem~\ref{thm:cgeq9} and Theorem~\ref{thm:hoffman-cover}, respectively. Let
\(\Gamma\) be a connected
sesqui-regular graph with parameters \((n,k,c)\) satisfying
\[
  -3\leq\lambda_{\min}(\Gamma)<-2,
  \qquad c\geq 3,
  \qquad k\geq\kappa_{3}.
\]

If \(c\geq9\), Theorem \ref{thm:extreme} and Theorem~\ref{thm:cgeq9} show that \(\Gamma\) is \(1\)-integrable. It remains to consider
\(3\leq c\leq8\).

Because \(\lambda_{\min}(\Gamma)<-2\), the graph \(\Gamma\) is not
complete. Since it is connected, there exist vertices \(x,y\) at
distance \(2\). Since $|N_{\Gamma}(x)\cap N_{\Gamma}(y)|=c$, the set
$N_{\Gamma}(y)\setminus(\{x\}\cup N_{\Gamma}(x))$ has
$k-c$ vertices. Together with $y$, all these vertices lie outside
$\{x\}\cup N_{\Gamma}(x)$. Hence
\[
 n-k-1
 =|V(\Gamma)\setminus(\{x\}\cup N_{\Gamma}(x))|
 \geq k-c+1
 \geq91>2.
\]
Theorem~\ref{thm:hoffman-cover} therefore supplies a fat Hoffman graph
\(\mathfrak h\) whose slim graph is \(\Gamma\), whose smallest eigenvalue
is at least \(-3\), and whose quasi-cliques are cliques. Hence
Assumption~\ref{ass:Gamma-h} is satisfied.

Propositions~\ref{prop:c-ge-5} and~\ref{prop:c-3-4} together show that
every indecomposable factor of \(\mathfrak h\) has an integral reduced
representation of norm \(3\). Taking the orthogonal direct sum of these
representations gives an integral reduced representation of norm \(3\)
for \(\mathfrak h\). By Theorem~\ref{thm:representation-equivalence},
\(\mathfrak h\) has an integral representation of norm \(3\).
Restricting that representation to the slim vertices yields an integral
matrix \(N\) such that
\[
  A(\Gamma)+3I=N^{\mathsf T}N.
\]
Finally, \(-3\leq\lambda_{\min}(\Gamma)<-2\) implies
\(\lceil-\lambda_{\min}(\Gamma)\rceil=3\). Hence \(\Gamma\) is
\(1\)-integrable.
\end{proof}

\section{Proof of Theorem~\ref{thm:sharpness}}\label{sec:sharpness}
Recall the Cayley graph realization of the Shrikhande graph 
\[
\Sh=\operatorname{Cay}\left(\mathbb{Z}_4^2,~\{\pm(1,0),~\pm(0,1),~\pm(1,1)\}\right).
\]
This graph is strongly regular with parameters $(16,6,2,2)$ and has spectrum 
\begin{equation}\label{spectrum}
\left\{
6^{(1)},2^{(6)},(-2)^{(9)}\right\}.
\end{equation}
See \cite{Shrikhande1959} and \cite[Chapter 10]{BrouwerCohenNeumaier1989}. The proof of Theorem \ref{thm:sharpness} requires lattice results.

\subsection{Lattice preliminaries}

Only the lattice facts used in the proof are recalled here. The
conventions follow \cite{ConwaySloane,Bourbaki}.

Let $\mathbb{R}^n$ be an $n$-dimensional Euclidean space equipped with the standard inner product. A \emph{lattice} $L$ in $\mathbb{R}^n$ is a discrete additive subgroup of  $\mathbb{R}^n$. Given vectors $\mathbf{x}_1,\ldots,\mathbf{x}_r\in L$, if every vector $\mathbf{x}$ in $L$ can be expressed as $\mathbf{x}=\sum_{i=1}^ra_i \mathbf{x}_i$ for some integers $a_1,\ldots,a_r$, then we say $L$ is generated by $\mathbf{x}_1,\ldots,\mathbf{x}_r\in L$. In particular, if $\mathbf{x}_1,\ldots,\mathbf{x}_r\in L$ are linearly independent, then $\mathbf{x}_1,\ldots,\mathbf{x}_r$ are called a \emph{basis} of $L$ and 

\[
 \operatorname{rank}(L):=r
 \quad\text{and}\quad
 \operatorname{Gram}(\mathbf{b}_1,\ldots,\mathbf{b}_r)
 :=\bigl(\ip{\mathbf{b}_i}{\mathbf{b}_j}\bigr)_{i,j=1}^r
\]
are called the \emph{rank} and the \emph{Gram matrix} of $L$, respectively.
The determinant of $L$ is
\[
 \det(L):=\det\operatorname{Gram}(\mathbf{b}_1,\ldots,\mathbf{b}_r).
\]
It is independent of the chosen basis.

A lattice $L$ is \emph{reducible}, if there exist non-zero lattice $L_1$ and lattice $L_2$ such that $(\mathbf x,\mathbf y)=0$ for any $\mathbf x\in L_1$ and  $\mathbf y\in L_2$ and $L=\{\mathbf x+\mathbf y\mid \mathbf x\in L_1,\mathbf y\in  L_2\}$. Otherwise $L$ is \emph{irreducible}.

A lattice is \emph{integral} if $(\mathbf{x},\mathbf{y})\in\Z$ for all $\mathbf x,\mathbf y\in L$, and it is \emph{even} if $\ip{x}{x}\in2\Z$ for all $x\in L$.  

Let $L\subseteq \mathbb{R}^n$ be an integral lattice of rank $r$. Let $L\otimes\mathbb{R}$ be the $r$-dimensional subspace of $\mathbb{R}^n$ spanned by the vectors of $L$. The \emph{dual lattice} of $L$ is
\[
 L^*:=\{\mathbf z\in L\otimes\mathbb{R}:\ip{\mathbf z}{\mathbf x}\in\Z\text{ for every }\mathbf x\in L\}.
\]
It follows $L\subseteq L^*$. For a positive real number $c$, let
\[
cL:=\{c\mathbf x:\mathbf x\in L\}.
\]
Then
\begin{equation}\label{scale lattice}
(cL)^*=c^{-1}L^*.
\end{equation}

A vector \(\mathbf x\) in an integral lattice is a \emph{root} if
\(\ip{\mathbf x}{\mathbf x}=2\); a lattice
generated by its roots is a \emph{root lattice}. Every root lattice is even.

\begin{theorem}[{\cite{Witt1941Spiegelungsgruppen}},{\cite[p.~27]{ConwaySloane1988LowDim}}]
\label{thm:irreducible-root-lattices}
Each irreducible root lattice is isometric to one of the following:
	\begin{enumerate}[label=\rm{(\roman*)}]
		\item $A_n=\{\mathbf{x}\in\mathbb{Z}^{n+1}\mid \sum\limits_{i=1}^{n+1} x_i=0\}$ for $n\geq1$,
		\item $D_n=\{\mathbf{x}\in\mathbb{Z}^n\mid \sum\limits_{i=1}^n x_i$ is even$\}$ for $n\geq4$,
		\item $E_8=D_8\cup (\mathbf{c}+D_8)$, where $\mathbf{c}=\frac{1}{2}(1,1,1,1,1,1,1,1)^{\mathsf T}$,
		\item $E_7=\{\mathbf{x}\in E_8\mid \sum\limits_{i=1}^8 x_i=0\}$,
		\item $E_6=\{\mathbf{x}\in E_7\mid x_7+x_8=0\}$.
	\end{enumerate}
Their ranks and determinants are
\[
\begin{array}{c|ccccc}
 L&A_n&D_n&E_6&E_7&E_8\\ \hline
 \operatorname{rank}(L)&n&n&6&7&8\\
 \det(L)&n+1&4&3&2&1.
\end{array}
\]
\end{theorem}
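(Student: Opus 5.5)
The plan is the classical route through root systems and Dynkin diagrams, followed by a separate determinant computation. Let $L$ be an irreducible root lattice and $R=\{\mathbf x\in L:\ip{\mathbf x}{\mathbf x}=2\}$ its set of roots. Because $L$ is integral and positive definite, Cauchy--Schwarz gives $\ip{\mathbf x}{\mathbf y}\in\{0,\pm1\}$ for roots $\mathbf x\neq\pm\mathbf y$.

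First I would show that $R$ is a reduced root system in $L\otimes\mathbb R$ with all roots of equal length. For $\mathbf x\in R$, the reflection $s_{\mathbf x}(\mathbf v)=\mathbf v-\ip{\mathbf v}{\mathbf x}\mathbf x$ maps $L$ to itself, since $\ip{\mathbf v}{\mathbf x}\in\Z$. It also preserves norms, so it permutes $R$. Finiteness of $R$ follows from discreteness. The crystallographic condition holds because $2\ip{\mathbf y}{\mathbf x}/\ip{\mathbf x}{\mathbf x}=\ip{\mathbf y}{\mathbf x}\in\Z$.

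Next I would choose a generic linear functional. This splits $R$ into positive and negative roots and gives a simple system $\mathbf b_1,\ldots,\mathbf b_r$: every root is a nonnegative or nonpositive integer combination of the $\mathbf b_i$, and $\ip{\mathbf b_i}{\mathbf b_j}\le0$ for $i\ne j$. Since $L$ is generated by $R$, the $\mathbf b_i$ form a $\Z$-basis of $L$. The Gram matrix is $2I-A(\Delta)$, where $\Delta$ is the simple graph on the $\mathbf b_i$ (the Dynkin diagram). If $\Delta$ were disconnected, $L$ would split orthogonally, so irreducibility makes $\Delta$ connected.

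The main step, and the expected obstacle, is to classify the connected graphs $\Delta$ with $2I-A(\Delta)$ positive definite, i.e.\ $\lambda_{\max}(\Delta)<2$. I would argue by forbidden subgraphs. The extended diagrams $\tilde A_n$ (cycles), $\tilde D_n$, $\tilde E_6$, $\tilde E_7$ and $\tilde E_8$ each have $\lambda_{\max}=2$, witnessed by an explicit positive null vector of $2I-A$. By interlacing (Lemma~\ref{lem:interlacing} applied to $-A$) and Perron--Frobenius monotonicity, none of them can occur as a subgraph. Excluding cycles makes $\Delta$ a tree. Excluding $\tilde D_4$ and $\tilde D_n$ leaves at most one vertex of degree $3$ and none of higher degree. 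In the branched case, with arms of lengths $p\le q\le s$, excluding $\tilde E_6$, $\tilde E_7$, $\tilde E_8$ forces $(p,q,s)=(1,1,s)$, $(1,2,2)$, $(1,2,3)$ or $(1,2,4)$. This yields exactly $A_n$, $D_n$, $E_6$, $E_7$, $E_8$.

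Finally I would check that the explicit coordinate lattices in the statement are root lattices with these diagrams. For example, $\mathbf e_i-\mathbf e_{i+1}$ is a simple system for $A_n$. For $D_n$ one adds $\mathbf e_{n-1}+\mathbf e_n$. For $E_8$ one takes $\mathbf c-\mathbf e_1-\mathbf e_8$ together with a $D_7$-type chain, and $E_7$, $E_6$ are the corresponding sublattices. Since the Gram matrix of a simple basis is determined by $\Delta$, the lattice is determined up to isometry. The determinants I would get either from the coordinate models ($A_n$ by a direct count; $D_n$ has index $2$ in $\Z^n$, so $\det=4$; $E_8$ is unimodular; $E_7$ and $E_6$ as orthogonal complements in $E_8$ of $A_1$ and $A_2$, with determinants $2$ and $3$) or by expanding $\det(2I-A(\Delta))$ along an end vertex of the tree.
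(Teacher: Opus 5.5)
The paper does not prove this theorem. It quotes it from Witt and Conway--Sloane and uses it only as a black box, to conclude that an irreducible root lattice of rank $7$ and determinant $2$ is $E_7$; the table rules out $A_7$ and $D_7$, whose determinants are $8$ and $4$. Your outline is the standard classical proof found in those sources, and it is correct. The reflections $s_{\mathbf x}$ make the roots a simply-laced crystallographic root system. A simple system is a $\mathbb Z$-basis of $L$ with Gram matrix $2I-A(\Delta)$. Excluding cycles, $\tilde D_n$, $\tilde E_6=T(2,2,2)$, $\tilde E_7=T(1,3,3)$ and $\tilde E_8=T(1,2,5)$ leaves exactly the paths and the trees $T(1,1,s)$, $T(1,2,2)$, $T(1,2,3)$, $T(1,2,4)$. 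Compared with the citation, your route gives self-containment, at the price of importing the standard facts on simple systems: existence, integrality of coefficients, and linear independence.

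Three points deserve to be made explicit when you write it out.

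\begin{enumerate}
\item The exclusion step is cleanest without interlacing. Let $H\subseteq\Delta$ be an extended diagram and $\mathbf v\geq0$ a nonzero null vector of $2I-A(H)$. Extending $\mathbf v$ by zero gives a nonpositive value of the quadratic form of $2I-A(\Delta)$, because $A(\Delta)$ dominates $A(H)$ entrywise on $V(H)$. This handles non-induced subgraphs directly.
\item You must show that your chosen simple roots generate the coordinate lattices in the statement, not a proper sublattice. Compare determinants: a full-rank sublattice with the same determinant is the whole lattice. So you need both of your determinant computations, the Dynkin recursion and the coordinate models, together.
\item The identity $\det(M^\perp)=\det M$ inside the unimodular lattice $E_8$ requires $M$ to be primitive. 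In the statement's coordinates, $E_7=(\mathbb Z\mathbf c)^\perp$ and $E_6=\langle\mathbf c,\mathbf e_7+\mathbf e_8\rangle^\perp$, with $\mathbb Z\mathbf c\cong A_1$ and $\langle\mathbf c,\mathbf e_7+\mathbf e_8\rangle\cong A_2$. Both are primitive, because an integral overlattice of index $m$ has determinant $\det M/m^2$, and $2$ and $3$ are squarefree.
\end{enumerate}
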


The following fact about the dual lattice of $E_7$ is also needed.
\begin{lemma}[{\cite[p.~40]{ConwaySloane1988LowDim}}]\label{dual of E7}
The dual lattice of $E_7$ can be expressed as following:
\[
E_7^*=E_7\cup\left\{ \frac{1}{4}(1,1,1,1,1,1,-3,-3)^{\mathsf T}+\mathbf x\mid \mathbf x\in E_7 \right\}.\]

Consequently, for every $\mathbf w\in E_7^*$,
\[
\ip{\mathbf w}{\mathbf w}\in
\begin{cases}
	2\Z,&\mathbf w\in E_7,\\[2mm]
	\dfrac32+2\Z,&\mathbf w\notin E_7.
\end{cases}
 \]
\end{lemma}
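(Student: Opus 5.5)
The set description of $E_7^*$ is the cited fact from \cite[p.~40]{ConwaySloane1988LowDim}; the substance to verify is the norm dichotomy, and I would re-derive the coset description along the way since it costs little. The plan is to show that $E_7^*$ has index $2$ over $E_7$, name an explicit representative $\mathbf v$ of the nontrivial coset, and then read off norms from $(\mathbf v,\mathbf v)$ together with the facts that $E_7$ is even and $E_7\subseteq E_7^*$.

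\emph{Step 1 (index).} By Theorem~\ref{thm:irreducible-root-lattices}, $\det(E_7)=2$. For an integral lattice, $[L^*:L]=\det(L)$, so $[E_7^*:E_7]=2$. Hence it suffices to exhibit one vector $\mathbf v\in E_7^*\setminus E_7$; then $E_7^*=E_7\cup(\mathbf v+E_7)$.

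\emph{Step 2 (the representative).} Take $\mathbf v=\frac14(1,1,1,1,1,1,-3,-3)^{\mathsf T}$.
\begin{itemize}
\item Its coordinates sum to $0$. Since $\operatorname{rank}(E_7)=7$ and $E_7$ lies in the hyperplane $\sum x_i=0$, this hyperplane is $E_7\otimes\mathbb R$, so $\mathbf v\in E_7\otimes\mathbb R$.
\item It is not in $E_8$, hence not in $E_7$, because its coordinates are neither all integers nor all in $\frac12+\Z$.
\item It pairs integrally with $E_7$. For $\mathbf x\in E_7$,
\[
(\mathbf v,\mathbf x)=\tfrac14\Bigl(\textstyle\sum_{i=1}^8x_i-4(x_7+x_8)\Bigr)=-(x_7+x_8),
\]
using $\sum_i x_i=0$. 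This is an integer, since $x_7,x_8$ are either both integers or both in $\frac12+\Z$.
\end{itemize}
So $\mathbf v\in E_7^*\setminus E_7$, which recovers the displayed description of $E_7^*$.

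\emph{Step 3 (norms).}
\begin{itemize}
\item If $\mathbf w\in E_7$, then $(\mathbf w,\mathbf w)\in2\Z$, because $E_7$ is a root lattice and hence even.
\item If $\mathbf w=\mathbf v+\mathbf x$ with $\mathbf x\in E_7$, then
\[
(\mathbf w,\mathbf w)=(\mathbf v,\mathbf v)+2(\mathbf v,\mathbf x)+(\mathbf x,\mathbf x).
\]
Here $(\mathbf v,\mathbf v)=\frac{6+9+9}{16}=\frac32$, the term $2(\mathbf v,\mathbf x)$ lies in $2\Z$ by Step~2, and $(\mathbf x,\mathbf x)\in2\Z$. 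Therefore $(\mathbf w,\mathbf w)\in\frac32+2\Z$.
\end{itemize}

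The only mild obstacle is Step~2's integrality check over all of $E_7$, including the half-integral vectors coming from $\mathbf c+D_8$. The rewriting $(\mathbf v,\mathbf x)=-(x_7+x_8)$ handles both cases uniformly, so no generator-by-generator verification is needed.
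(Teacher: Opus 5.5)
Your proof is correct, and it takes a different route from the paper only in the sense that the paper gives no proof. The paper imports the coset description of $E_7^*$ directly from Conway--Sloane and states the norm dichotomy as an unproved ``Consequently''.

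You instead make the lemma self-contained:
\begin{itemize}
\item The identity $[L^*:L]=\det(L)$, together with $\det(E_7)=2$ from Theorem~\ref{thm:irreducible-root-lattices}, reduces the description to exhibiting a single vector of $E_7^*\setminus E_7$.
\item The identity $(\mathbf v,\mathbf x)=-(x_7+x_8)$ for $\mathbf x\in E_7$ checks integrality uniformly over both cosets of $D_8$ in $E_8$.
\item You rightly verify that $\mathbf v$ lies in $E_7\otimes\mathbb R$, which the paper's definition of the dual lattice requires.
\end{itemize}

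Your Step~3 is exactly the computation hidden behind the paper's ``Consequently'': $(\mathbf v,\mathbf v)=\frac32$, $2(\mathbf v,\mathbf x)\in2\Z$, and evenness of $E_7$.

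The citation is shorter. Your version has the advantage that the only fact used later, in Lemma~\ref{lem:E7-dual-short}, becomes checkable within the paper. That fact is that norms in $E_7^*\setminus E_7$ lie in $\frac32+2\Z$, and your argument needs only the tabulated value $\det(E_7)=2$ as external input.
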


\begin{lemma}\label{lem:E7-dual-short}
Let $L\cong \sqrt2 E_7$. If $0\ne \mathbf z\in L^*$ and
$\ip{\mathbf z}{\mathbf z}\le1$, then
\[
 \ip{\mathbf z}{\mathbf z}\in\left\{\frac34,1\right\}.
\]
\end{lemma}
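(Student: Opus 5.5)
We reduce to the dual of $E_7$ via the scaling rule \eqref{scale lattice}, and then read off the possible norms from Lemma~\ref{dual of E7}. Fix an isometry $L=\sqrt2\,M$ with $M\cong E_7$. By \eqref{scale lattice},
\[
 L^*=(\sqrt2 M)^*=\tfrac1{\sqrt2}M^*.
\]
So every $\mathbf z\in L^*$ has the form $\mathbf z=\frac1{\sqrt2}\mathbf w$ with $\mathbf w\in M^*\cong E_7^*$, and
\[
 \ip{\mathbf z}{\mathbf z}=\tfrac12\ip{\mathbf w}{\mathbf w}.
\]
The hypotheses $0\neq\mathbf z$ and $\ip{\mathbf z}{\mathbf z}\le1$ become $\mathbf w\neq0$ and $\ip{\mathbf w}{\mathbf w}\le2$.

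We now split according to the coset of $\mathbf w$, using Lemma~\ref{dual of E7}.

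\emph{Case $\mathbf w\in E_7$.} Then $\ip{\mathbf w}{\mathbf w}\in2\Z$. Since $\mathbf w\neq0$ and the norm is positive definite, $\ip{\mathbf w}{\mathbf w}\geq2$. Combined with $\ip{\mathbf w}{\mathbf w}\le2$, this forces $\ip{\mathbf w}{\mathbf w}=2$, and hence $\ip{\mathbf z}{\mathbf z}=1$.

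\emph{Case $\mathbf w\notin E_7$.} Then $\ip{\mathbf w}{\mathbf w}\in\frac32+2\Z$. The norm is positive, so it lies in $\{\frac32,\frac72,\dots\}$, and the bound $\ip{\mathbf w}{\mathbf w}\le2$ leaves only $\ip{\mathbf w}{\mathbf w}=\frac32$. Hence $\ip{\mathbf z}{\mathbf z}=\frac34$.

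The argument is short, and no step presents a real obstacle. The only points needing care are:

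1. Applying \eqref{scale lattice} correctly, which relies on the dual being taken inside $L\otimes\mathbb R=M\otimes\mathbb R$.
2. Transporting the coset description of Lemma~\ref{dual of E7} through the chosen isometry $M\cong E_7$, which is legitimate because the dual is defined intrinsically, so an isometry carries $M^*$ onto $E_7^*$ and preserves norms.

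The norm set $\{\frac34,1\}$ is exactly what is needed to control short dual vectors later.
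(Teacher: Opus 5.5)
Your proposal is correct and follows the same route as the paper. You write $\mathbf z=\mathbf w/\sqrt2$ with $\mathbf w\in E_7^*$ via \eqref{scale lattice}, use Lemma~\ref{dual of E7} to place $\ip{\mathbf w}{\mathbf w}$ in $2\mathbb Z$ or $\frac32+2\mathbb Z$ according to whether $\mathbf w\in E_7$, and then conclude from $0<\ip{\mathbf z}{\mathbf z}\le1$; your explicit case check simply spells out the paper's final one-line step.
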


\begin{proof}By \eqref{scale lattice},
\[
 L^*\cong\frac1{\sqrt2}E_7^*.
\]
Write \(\mathbf{z}=\mathbf{w}/\sqrt2\), where
\(\mathbf{w}\in E_7^*\).  If \(\mathbf{w}\in E_7\), then
Lemma~\ref{dual of E7} gives
\[
 \ip{\mathbf{z}}{\mathbf{z}}
 =\frac12\ip{\mathbf{w}}{\mathbf{w}}\in\mathbb Z.
\]
If \(\mathbf{w}\notin E_7\), then the same lemma gives
\[
 \ip{\mathbf{z}}{\mathbf{z}}
 =\frac12\ip{\mathbf{w}}{\mathbf{w}}
 \in\frac34+\mathbb Z.
\]
The assertion follows from
\(0<\ip{\mathbf{z}}{\mathbf{z}}\leq1\).
\end{proof}

 \subsection{The integrability of the graph $\Sh\square K_t$}

To prove Theorem~\ref{thm:sharpness}, first we show that the graph $\Sh\square K_t$ is sesqui-regular and has smallest eigenvalue $-3$.

Given two matrix $M$ and $R$, define the \emph{Kronecker product}  $M\otimes R$ of $M$ and $R$ to be the matrix  by replacing the $ij$-entry of $M$ by $M_{ij}R$ for all $i$ and $j$. 

\begin{lemma}[{\cite[p.~206]{GodsilRoyle2001}}]\label{lem:eigenvalue cartesian product}
Let $G$ and $H$ be two graphs with adjacency matrix $A(G)$ and $A(H)$ respectively and let $G\square H$ be the Cartesian product of $G$ and $H$. If $A(G\square H)$ is the adjacency matrix of $G\square H$, then
\[
A(G\square H)=A(G)\otimes I+I\times A(H).
\]
In particular, if $\theta$ is an eigenvalue of $G$ with multiplicity $m_1$ and $\tau$ is an eigenvalue of $H$ with multiplicity $m_2$, then $\theta+\tau$ is an eigenvalue of the graph $G\square H$ with multiplicity $m_1m_2$.
\end{lemma}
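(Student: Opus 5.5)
The plan is to verify the matrix identity entrywise and then diagonalize both summands simultaneously using tensor products of eigenvectors. I read the second term in the displayed identity as the Kronecker product $I\otimes A(H)$; the ``$\times$'' appears to be a typo for $\otimes$.

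\textbf{Step 1: the matrix identity.} Let $|V(G)|=p$ and $|V(H)|=q$, and index the rows and columns of $p q\times p q$ matrices by pairs $(x,i)\in V(G)\times V(H)$ in lexicographic order, which matches the block structure of the Kronecker product. Using the definition of the Kronecker product in the text, I will compute the two entries at $((x,i),(y,j))$:
\[
 (A(G)\otimes I)_{(x,i),(y,j)}=A(G)_{xy}\,\delta_{ij},
 \qquad
 (I\otimes A(H))_{(x,i),(y,j)}=\delta_{xy}\,A(H)_{ij}.
\]
The first entry equals $1$ exactly when $i=j$ and $x\sim_G y$. The second equals $1$ exactly when $x=y$ and $i\sim_H j$. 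These two cases are mutually exclusive, because the first forces $x\ne y$ and the second forces $x=y$. So the sum is the $0/1$ matrix recording precisely the adjacency rule of $G\square H$, which is $A(G\square H)$.

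\textbf{Step 2: eigenvectors.} Since $A(G)$ and $A(H)$ are real symmetric, choose orthonormal eigenbases $\mathbf u_1,\dots,\mathbf u_p$ with $A(G)\mathbf u_a=\theta_a\mathbf u_a$, and $\mathbf v_1,\dots,\mathbf v_q$ with $A(H)\mathbf v_b=\tau_b\mathbf v_b$. By the mixed-product rule $(M\otimes R)(\mathbf u\otimes\mathbf v)=M\mathbf u\otimes R\mathbf v$,
\[
 A(G\square H)(\mathbf u_a\otimes\mathbf v_b)
 =\theta_a\,\mathbf u_a\otimes\mathbf v_b+\mathbf u_a\otimes\tau_b\mathbf v_b
 =(\theta_a+\tau_b)\,\mathbf u_a\otimes\mathbf v_b.
\]
The vectors $\mathbf u_a\otimes\mathbf v_b$ are orthonormal, since $(\mathbf u\otimes\mathbf v,\mathbf u'\otimes\mathbf v')=(\mathbf u,\mathbf u')(\mathbf v,\mathbf v')$. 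There are $pq$ of them, so they form an eigenbasis of $\mathbb R^{pq}$. Hence the spectrum of $G\square H$, with multiplicities, is the multiset $\{\theta_a+\tau_b\}$.

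\textbf{Step 3: multiplicities.} If $\theta$ occurs $m_1$ times among the $\theta_a$ and $\tau$ occurs $m_2$ times among the $\tau_b$, then the pair $(\theta,\tau)$ contributes $m_1m_2$ independent eigenvectors with eigenvalue $\theta+\tau$. This is the precise meaning of the ``in particular'' clause. The total multiplicity of the number $\theta+\tau$ is the sum of $m_1m_2$ over all pairs of eigenvalues with that sum, so it may exceed $m_1m_2$ when different pairs coincide.

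There is no real obstacle here. The only points needing care are keeping the index ordering consistent with the Kronecker block convention, and phrasing the multiplicity claim correctly in the presence of coincident sums $\theta+\tau=\theta'+\tau'$.
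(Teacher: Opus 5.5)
Your proof is correct and is the standard argument behind the cited Godsil--Royle result; the paper gives no proof of its own. You verify $A(G\square H)=A(G)\otimes I+I\otimes A(H)$ entrywise and then use the eigenbasis $\mathbf u_a\otimes\mathbf v_b$. Your reading of ``$\times$'' as $\otimes$ is right, and so is your caveat that $m_1m_2$ is only the contribution of the pair $(\theta,\tau)$: the true multiplicity can be larger when sums coincide, as happens for $\Gamma_t$ with $t=4$ or $t=8$. Neither point affects the paper's use, since for $t\geq2$ the value $-3$ arises only from the pair $(-2,-1)$.
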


\begin{lemma}\label{lem:Gt is sesqui-regular}
For every $t\geq2$, the graph $\G_t=\Sh \square K_t$is connected and sesqui-regular with parameters $(n,k,c)=(16t,t+5,2)$ and $\lmin(\G_t)=-3$.
\end{lemma}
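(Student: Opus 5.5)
The plan is to verify the four assertions of the lemma directly from the product structure, using the strongly regular parameters $(16,6,2,2)$ of $\Sh$ and Lemma~\ref{lem:eigenvalue cartesian product} for the spectrum.

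\emph{Connectivity and regularity.} Connectivity holds because both factors are connected: any $(x,i)$ is joined to $(y,j)$ by first moving along a path from $x$ to $y$ inside the copy $\Sh\times\{i\}$, then taking one edge inside the copy $\{y\}\times K_t$. The graph has $16t$ vertices, and $(x,i)$ has $6$ neighbors $(x',i)$ and $t-1$ neighbors $(x,j)$, so $\G_t$ is $(t+5)$-regular.

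\emph{The parameter $c$.} Let $(x,i)$ and $(y,j)$ be at distance two; there are two cases.
\begin{enumerate}
\item If $i=j$, then $y\neq x$ and $x\not\sim y$ in $\Sh$. A common neighbor cannot lie in a different layer $j'\neq i$: the only neighbor of $(x,i)$ in layer $j'$ is $(x,j')$, the only neighbor of $(y,i)$ there is $(y,j')$, and these differ. So the common neighbors are exactly $(w,i)$ with $w\in N_{\Sh}(x)\cap N_{\Sh}(y)$, giving exactly $2$ of them.
\item If $i\neq j$, then necessarily $x\sim y$ in $\Sh$; otherwise the distance would be at least $3$, since distance is additive across a Cartesian product and $\dist_{K_t}(i,j)=1$. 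A common neighbor must share one coordinate with each of the two vertices while differing from each in exactly one coordinate. Hence the common neighbors are exactly $(y,i)$ and $(x,j)$, again giving $2$.
\end{enumerate}
Thus $c=2$ in every case.

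\emph{The smallest eigenvalue.} The spectrum of $K_t$ is $\{(t-1)^{(1)},(-1)^{(t-1)}\}$, and by \eqref{spectrum} the spectrum of $\Sh$ is $\{6^{(1)},2^{(6)},(-2)^{(9)}\}$. By Lemma~\ref{lem:eigenvalue cartesian product}, the eigenvalues of $\G_t$ are all sums $\theta+\tau$, and these account for all $16t$ eigenvalues with multiplicity. The minimum sum is $-2+(-1)=-3$, which occurs with multiplicity $9(t-1)\geq 9$ because $t\geq2$. Every other sum is larger, so $\lmin(\G_t)=-3$.

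The only step needing any care is the case analysis for $c$, and in particular excluding common neighbors in a third layer. This follows because each vertex has a unique neighbor in any other layer.
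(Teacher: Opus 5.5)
Your proposal is correct and follows essentially the same route as the paper: direct counting for connectivity, valency and the two distance-two cases, then the Cartesian-product spectrum lemma to get smallest eigenvalue $-3$. You give slightly more detail than the paper in two places: you rule out common neighbours in a third layer, and you use $t\geq 2$ to show that the eigenvalue $-3$ actually occurs, with multiplicity $9(t-1)$.
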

\begin{proof}
Both $\Sh$ and $K_t$ are connected, so is \(\Gamma_t\). The vertex set of \(\Gamma_t\)  is $V(\Gamma_t)=V(\Gamma_{\rm Sh})\times\{1,2,\ldots,t\}$,  and therefore $|V(\Gamma_t)|=16t$. Every vertex $(x,i)\in V(\G_t)$ has six neighbors in the set $\{(y,i)\mid y\in V(\Sh)\}$ and \(t-1\)
neighbors in the set $\{(x,j)\mid j=1,2,\ldots,t\}$. Hence the valency of $\Gamma_t$ is $t+5$. 

Let \((x,i)\) and \((y,j)\) be vertices at distance two.  If $i=j$,
then \(\operatorname{dist}_{\Sh}(x,y)=2\), and their common neighbours
are the two vertices \((z,i)\), where $z$ is a common neighbour of $x$
and $y$ in $\Sh$.  If $i\ne j$, then $x\sim_{\Sh}y$, and the common
neighbours are precisely \((x,j)\) and \((y,i)\).  Hence every pair of
vertices at distance two has exactly two common neighbours.

The spectra of $\Sh$ and $K_t$ are
\[
 \left\{6^{(1)},2^{(6)},(-2)^{(9)}\right\}
 \quad\text{and}\quad
 \left\{(t-1)^{(1)},(-1)^{(t-1)}\right\},
\]
respectively.  Lemma~\ref{lem:eigenvalue cartesian product} therefore
gives the spectrum
\[
 \left\{(t+5)^{(1)},(t+1)^{(6)},(t-3)^{(9)},
 5^{(t-1)},1^{(6t-6)},(-3)^{(9t-9)}\right\}
\]
of $\G_t$, and so \(\lmin(\G_t)=-3\).

This completes the proof.
\end{proof}

\begin{theorem}\label{thm:ShK2-not-integrable-short}
The graph $\Sh\square K_2$ is not $1$-integrable.
\end{theorem}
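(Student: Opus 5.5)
The plan is to argue by contradiction. Suppose $\Gamma_2=\Sh\square K_2$ is $1$-integrable. For each $x\in V(\Sh)$, write $\mathbf u_x,\mathbf v_x\in\Z^m$ for the integral vectors assigned to $(x,1)$ and $(x,2)$. Since $\lmin(\Gamma_2)=-3$ by Lemma~\ref{lem:Gt is sesqui-regular}, these vectors have Gram matrix $A(\Gamma_2)+3I$. I would split each pair into a sum and a difference,
\[
 \mathbf s_x:=\mathbf u_x+\mathbf v_x,\qquad \mathbf d_x:=\mathbf u_x-\mathbf v_x .
\]
Using $(\mathbf u_x,\mathbf v_x)=1$ and $(\mathbf u_x,\mathbf v_y)=0$ for $x\neq y$, one computes
\[
 (\mathbf d_x,\mathbf d_y)=2\bigl(A(\Sh)+2I\bigr)_{xy},\qquad (\mathbf s_x,\mathbf d_y)=0 .
\]
By the spectrum \eqref{spectrum}, $A(\Sh)+2I$ is positive semidefinite of rank $7$. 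It is the Gram matrix of $16$ roots, so the lattice $M$ with this Gram matrix is a root lattice of rank $7$. The first real step is to identify $M$ as $E_7$. For this I would use Theorem~\ref{thm:irreducible-root-lattices}, checking that $M$ is irreducible (the Shrikhande graph is connected, so the root system is indecomposable) and computing $\det M=2$ from an explicit basis of $7$ of the $\mathbf d_x/\sqrt2$. This gives $L:=\langle \mathbf d_x\rangle_\Z\cong\sqrt2E_7$.

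The key projection argument comes next. Let $\pi$ be the orthogonal projection onto $L\otimes\mathbb R$. Each integral vector $\mathbf z\in\Z^m$ has integral inner products with the integral vectors $\mathbf d_x$, so $\pi(\mathbf z)\in L^*$. Because $\mathbf s_x\perp L$, we get $\pi(\mathbf u_x)=\mathbf d_x/2$, which is a vector of norm $1$ in $L^*$. Next, apply this to the standard basis vectors $\mathbf e_i$. Each $\pi(\mathbf e_i)$ lies in $L^*$ and has norm at most $1$, so by Lemma~\ref{lem:E7-dual-short} its norm lies in $\{0,\tfrac34,1\}$. Moreover $\sum_i \pi(\mathbf e_i)\pi(\mathbf e_i)^{\mathsf T}$ is the identity on the $7$-dimensional space $L\otimes\mathbb R$, so
\[
 \sum_i\ip{\pi(\mathbf e_i)}{\pi(\mathbf e_i)}=7 .
\]
Thus $7=\tfrac34 a+b$, where $a$ and $b$ count the coordinates of the two nonzero types.

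If $\ip{\pi(\mathbf e_i)}{\pi(\mathbf e_i)}=1$, then $\mathbf e_i\in L\otimes\mathbb R$. In the extreme case $a=0$, the space $L\otimes\mathbb R$ is a coordinate subspace $\mathbb R^7$, and $\pi(\mathbf u_x)=\mathbf d_x/2$ is simply a restriction of coordinates of $\mathbf u_x$. Hence $\mathbf d_x\in2\Z^7$, and having norm $4$ it equals $\pm2\mathbf e_j$. Then $(\mathbf d_x,\mathbf d_y)\in\{0,\pm4\}$ for all $x\neq y$, contradicting the value $2$ on edges of $\Sh$.

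The main obstacle is the mixed case $a>0$, where some coordinates project onto the non-trivial coset of type norm $\tfrac34$ in $L^*\cong E_7^*/\sqrt2$. Here I would decompose $\Z^m$ according to these types and use the integrality of the coordinates $(\mathbf d_x)_i=\ip{\pi(\mathbf e_i)}{\mathbf d_x}$. I would also use that each $\mathbf d_x$ has norm $4$, so its support is either a single $\pm2$ or four $\pm1$ entries, and that $\mathbf d_x/2=\sum_i(\mathbf d_x)_i\pi(\mathbf e_i)/2$ must have norm $1$. From these I would derive that the contributions from coordinates of type $\tfrac34$ cancel or cannot occur. Failing a direct argument, the fallback is to use $a\in4\Z$, forced by $7-\tfrac34a\in\Z$, and to bound $a$ by examining how a norm-$\tfrac34$ vector of $L^*$ can pair integrally with all sixteen $\mathbf d_x$. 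This would reduce to a finite check against the $56$ minimal vectors of $E_7^*$, and I would aim to show each configuration forces a contradiction with the Gram values of the $\mathbf u_x$.
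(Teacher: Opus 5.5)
Your setup matches the paper's: the sum and difference vectors, the inclusion $\pi(\mathbf e_i)\in L^*$, Lemma~\ref{lem:E7-dual-short}, and the trace identity $\sum_i\ip{\pi(\mathbf e_i)}{\pi(\mathbf e_i)}=7$. However, the proof is not finished. You dispose only of the case $a=0$. The case $a>0$, which you call the main obstacle, is left as a sketch that is never carried out: cancellation of the norm-$\frac34$ contributions, or a finite check against the minimal vectors of $E_7^*$. As written, this is a genuine gap.

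The missing idea is that the parity argument you used for $a=0$ works one coordinate at a time, and it rules out norm $1$ rather than norm $\frac34$.
\begin{itemize}
\item Suppose a single index $i$ has $\ip{\pi(\mathbf e_i)}{\pi(\mathbf e_i)}=1$. Then $\mathbf e_i\in L\otimes\mathbb R$, so $\mathbf e_i\perp\mathbf s_x$ for every $x$.
\item Hence $(\mathbf d_x)_i=\ip{\mathbf e_i}{\mathbf d_x+\mathbf s_x}=2\ip{\mathbf e_i}{\mathbf u_x}\in2\Z$ for every $x$.
\item So $\frac12\mathbf e_i$ lies in $L\otimes\mathbb R$ and has integral inner products with all generators $\mathbf d_x$. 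It is therefore a nonzero vector of $L^*$ of norm $\frac14$, contradicting Lemma~\ref{lem:E7-dual-short}.
\end{itemize}
Therefore $b=0$, and $7=\frac34a$ would force $3a=28$, which is impossible. The mixed case never needs to be analyzed, your separate argument for $a=0$ becomes redundant, and no work with the nontrivial coset of $E_7^*$ or its $56$ minimal vectors is required.

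A minor point: to obtain $\det M=2$ for the lattice generated by all sixteen $\mathbf d_x/\sqrt2$, you need the seven chosen roots to generate it. This holds because a sublattice of index $j$ gives $2=j^2\det M$ with $\det M\in\Z_{>0}$, forcing $j=1$. Alternatively, work as the paper does with the lattice generated by the seven chosen vectors, whose span is already $L\otimes\mathbb R$; the argument above goes through unchanged.
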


\begin{proof}
Put $A=A(\Sh)$ and suppose, to the contrary, that
\(\Sh\square K_2\) is $1$-integrable.  By
Lemma~\ref{lem:Gt is sesqui-regular}, its smallest eigenvalue is $-3$.
Thus there are integral vectors
\[
 \{\mathbf{u}_x,\mathbf{v}_x:x\in V(\Sh)\}\subset\mathbf{R}^m
\]
such that
\begin{equation}\label{eq:ShK2-gram}
 \ip{\mathbf{u}_x}{\mathbf{u}_y}
 =\ip{\mathbf{v}_x}{\mathbf{v}_y}=(A+3I)_{xy},
 \qquad
 \ip{\mathbf{u}_x}{\mathbf{v}_y}=\delta_{xy}
 \quad(x,y\in V(\Sh)).
\end{equation}
Set
\[
 \mathbf{a}_x=\mathbf{u}_x-\mathbf{v}_x,
 \qquad
 \mathbf{b}_x=\mathbf{u}_x+\mathbf{v}_x.
\]
It follows from \eqref{eq:ShK2-gram} that
\begin{equation}\label{eq:ShK2-sum-diff}
 \ip{\mathbf{a}_x}{\mathbf{a}_y}=2(A+2I)_{xy},
 \qquad
 \ip{\mathbf{a}_x}{\mathbf{b}_y}=0
 \quad(x,y\in V(\Sh)).
\end{equation}
Let $W$ be the vector space spanned by the vector set $
\{\mathbf{a}_x:x\in V(\Sh)\}$. The spectrum in \eqref{spectrum} and \eqref{eq:ShK2-sum-diff} give
\begin{equation}\label{eq:ShK2-W-dimension}
 \dim W=\operatorname{rank}(A+2I)=7.
\end{equation}
Moreover, every \(\mathbf{b}_x\) is orthogonal to $W$.

Choose the seven vertices
\[
 (0,0),(0,1),(0,2),(1,0),(1,2),(2,0),(3,3)
\]
of the Cayley model of $\Sh$, in this order denoted by
\(x_1,\ldots,x_7\).  Let $\Lambda$ be the lattice generated by
\(\mathbf{a}_{x_1},\ldots,\mathbf{a}_{x_7}\), and put
\(\boldsymbol{\alpha}_i=\frac{1}{\sqrt2}\mathbf{a}_{x_i}\).  The Gram matrix of
the vectors \(\boldsymbol{\alpha}_1,\ldots,
\boldsymbol{\alpha}_7\) is
\begin{equation}\label{eq:ShK2-E7-gram}
M=
 \begin{pmatrix}
 2&1&0&1&0&0&1\\
 1&2&1&0&1&0&0\\
 0&1&2&0&1&0&0\\
 1&0&0&2&0&1&0\\
 0&1&1&0&2&0&0\\
 0&0&0&1&0&2&0\\
 1&0&0&0&0&0&2
 \end{pmatrix},
 \qquad \det M=2.
\end{equation}
Thus \(\frac{1}{\sqrt2}\Lambda\) is an even root lattice of rank $7$ and
determinant $2$. The nonorthogonality graph of the displayed roots is
connected, so this root lattice is irreducible.
Theorem~\ref{thm:irreducible-root-lattices}
therefore yields $\frac{1}{\sqrt2}\Lambda \cong E_7$, equivalently, 
\begin{equation}\label{eq:ShK2-Lambda-E7}
 \Lambda\cong\sqrt2E_7.
\end{equation}
Since \(\det M\ne0\), the seven selected vectors are independent; by
\eqref{eq:ShK2-W-dimension}, they span $W$.

Let \(\mathbf{e}_1,\ldots,\mathbf{e}_m\) be the standard orthonormal
basis of $\mathbb{R}^m$, and put
\[
 \mathbf{p}_i=\pi_W(\mathbf{e}_i),
\]
where $\pi_W$ is orthogonal projection onto $W$.  For every
\(\mathbf{y}\in\Lambda\),
\[
 \ip{\mathbf{p}_i}{\mathbf{y}}
 =\ip{\mathbf{e}_i}{\mathbf{y}}\in\mathbb Z,
\]
because $\Lambda$ is generated by integral vectors.  Hence
\(\mathbf{p}_i\in\Lambda^*\), and
\(\ip{\mathbf{p}_i}{\mathbf{p}_i}\leq1\).  By
Lemma~\ref{lem:E7-dual-short},
\begin{equation}\label{eq:ShK2-projection-values}
 \mathbf{p}_i=\mathbf{0}
 \quad\text{or}\quad
 \ip{\mathbf{p}_i}{\mathbf{p}_i}\in\left\{\frac34,1\right\}.
\end{equation}

We next exclude the value $1$.  Suppose that
\(\ip{\mathbf{p}_i}{\mathbf{p}_i}=1\).  Since
\(\mathbf{e}_i-\mathbf{p}_i\) is orthogonal to $W$,
\[
 1=\ip{\mathbf{e}_i}{\mathbf{e}_i}
 =\ip{\mathbf{p}_i}{\mathbf{p}_i}
 +\ip{\mathbf{e}_i-\mathbf{p}_i}{\mathbf{e}_i-\mathbf{p}_i},
\]
and therefore \(\mathbf{e}_i=\mathbf{p}_i\in W\).  Thus
\(\ip{\mathbf{e}_i}{\mathbf{b}_x}=0\) for every $x\in V(\Sh)$.  Using
the definitions of \(\mathbf{a}_x\) and \(\mathbf{b}_x\), we obtain
\[
 \ip{\mathbf{e}_i}{\mathbf{a}_x}
 =2\ip{\mathbf{e}_i}{\mathbf{u}_x}\in2\mathbb Z
 \qquad(x\in V(\Sh)).
\]
Consequently,
\(\ip{\frac12\mathbf{e}_i}{\mathbf{y}}\in\mathbb Z\) for every
\(\mathbf{y}\in\Lambda\), and hence
\[
 \frac12\mathbf{p}_i=\frac12\mathbf{e}_i\in\Lambda^*.
\]
However, \(\ip{\frac12\mathbf{p}_i}{\frac12\mathbf{p}_i}=\ip{\frac12\mathbf{e}_i}{\frac12\mathbf{e}_i}=\frac{1}{4}\), contradicting
Lemma~\ref{lem:E7-dual-short}.  Therefore
\begin{equation}\label{eq:ShK2-projection-norms}
 \ip{\mathbf{p}_i}{\mathbf{p}_i}\in\left\{0,\frac34\right\}
 \qquad(1\leq i\leq m).
\end{equation}

Finally, the trace of the orthogonal projection onto $W$ is $\dim W=7$.
Its diagonal entries in the basis
\(\mathbf{e}_1,\ldots,\mathbf{e}_m\) are
\(\ip{\mathbf{p}_i}{\mathbf{p}_i}\), and hence
\[
 7=\sum_{i=1}^m\ip{\mathbf{p}_i}{\mathbf{p}_i}.
\]
This contradicts \eqref{eq:ShK2-projection-norms}, since the right-hand
side belongs to \(\frac{3}{4}\mathbb Z\), whereas \(7\notin\frac{3}{4}\mathbb Z\).
This contradiction proves the theorem.
\end{proof}

\begin{proof}[Proof of Theorem~\ref{thm:sharpness}]
Lemma~\ref{lem:Gt is sesqui-regular} proves the asserted connectedness,
parameters, and smallest eigenvalue.  It remains to prove that $\G_t$ is
not $1$-integrable.  Note that $\G_t$ contains $\G_2=\Sh\square K_2$ as an induced subgraph, which is not $1$-integrable. It follows immediately that 
$\G_t$ is also not $1$-integrable.
\end{proof}

\section*{Acknowledgements}
Q. Yang is supported by  the National Natural Science Foundation of China (No. 12401460).

\end{document}